\documentclass[12pt]{article}
\usepackage{amsmath}
\usepackage{amssymb}
\usepackage{amsthm}
\usepackage{amsfonts}
\usepackage{graphicx}
\usepackage{algorithm}
\usepackage{algpseudocode}
\usepackage[usenames,dvipsnames]{xcolor}
\usepackage[listings, skins]{tcolorbox}
\usepackage{tabularx,siunitx}
\usepackage{subcaption}
\usepackage{authblk}
\usetikzlibrary{positioning,arrows.meta}
\usepackage{multirow}
\usepackage{makecell}

\colorlet{Accent}{Black}

\usepackage{booktabs} 
\usepackage[margin=1in]{geometry} 
\usepackage{enumitem} 

\usepackage{mathtools} 
\usepackage{bm} 
\usepackage{hyperref} 

\hypersetup{
    colorlinks=blue,
    linkcolor=blue,
    filecolor=magenta,      
    urlcolor=cyan,
    pdftitle={Operational Levers},
    pdfpagemode=FullScreen,
}

\usepackage{enumitem}            

\setlist[enumerate]{%
  font=\sffamily\bfseries\color{Accent},   
  label=\arabic*.                        
}

\makeatletter
\let\old@maketitle\@maketitle

\renewcommand{\@maketitle}{%
  {\sffamily\color{Accent}%
    \newpage
    \null\vskip 2em
    \begin{center}%
      {\LARGE \bfseries \@title \par}%
      \vskip 1.5em%
      {\large
        \lineskip .5em%
        \begin{tabular}[t]{c}%
          \@author
        \end{tabular}\par}%
      \vskip 1em%
      {\large \@date \par}%
    \end{center}%
    \par\vskip 1.5em}%
}
\makeatother

\usepackage{titlesec}

\titleformat{\section}
  {\normalfont\normalsize\sffamily\bfseries}{\thesection}{1em}{\MakeUppercase}
\titleformat{\subsection}
  {\normalfont\normalsize\sffamily\bfseries}{\thesubsection}{1em}{}
\titleformat{\subsubsection}
  {\normalfont\normalsize\sffamily\color{Accent}\itshape}{\thesubsubsection}{1em}{}

\makeatletter
\renewcommand{\tagform@}[1]{%
  \maketag@@@{\color{Accent}\sffamily(#1)}%
}
\makeatother

\renewenvironment{abstract}{%
	\sffamily
	\begin{center}
		\bfseries\abstractname
	\end{center}%
}{%
}

\usepackage{fancyhdr}
\renewcommand{\headrulewidth}{0.5pt} 
\renewcommand{\headrule}{\hbox to\headwidth{\leaders\hrule height \headrulewidth\hfill}} 

\renewcommand{\headrule}{%
  \vspace{-2.0ex} 
  \hbox to\headwidth{\leaders\hrule height \headrulewidth\hfill}%
  \vspace{-0.5ex} 
}
\tikzset{every picture/.style={line width=0.75pt}} 

\title{Regulation Zero 2: A Flow-Centric Sequential Regulation Planning Framework to Counter Regulation Cascading in Pre-tactical Air Traffic Flow Management}
\author[1,3]{Thinh Hoang\footnote{Work done at ENAC.}}
\author[2]{Zhengyi Wang}
\author[2]{Leïla Zerrouki}
\author[1]{Daniel Delahaye}

\affil[1]{École Nationale de l'Aviation Civile, Toulouse, France}
\affil[2]{EUROCONTROL Innovation Hub, Brétigny-sur-Orge, France}
\affil[3]{Department of Automotive Engineering, Van Lang University, Vietnam}
\date{\today}

\theoremstyle{plain}
\newtheorem{proposition}{Proposition}

\theoremstyle{definition}

\newcommand{\kl}[2]{D_{\mathrm{KL}}(#1 \parallel #2)}

\newcommand{\expect}[2]{\mathbb{E}_{#1}\left[#2\right]}

\newcommand{\CC}{\mathcal{C}}

\newcommand{\VV}{\mathcal{V}}

\newcommand{\HV}{\operatorname{HV}}
\newcommand{\II}{\mathcal{I}}
\newcommand{\TC}{\operatorname{TC}}
\newcommand{\HC}{\operatorname{HC}}

\begin{document}

\maketitle

\begin{abstract}
  Air Traffic Flow Management (ATFM) traffic regulations are being increasingly used as rising demand meets persistent workforce shortages. This operational strain has amplified a critical phenomenon that we call \emph{regulation cascading}: the compounding, non-linear interactions that occur when multiple regulations influence one another in unpredictable ways. As the number and complexity of regulations grow, cascading effects become more pronounced, undermining the network operator's ability to protect sectors reliably.

  To address this challenge, we introduce Regulation Zero 2, an updated sequential planning framework that natively operates in the regulation space, optimizing over ordered sequences of flow-level regulations that remain compatible as much as possible with existing slot-allocation systems such as CASA and RBS++. We equipped Regulation Zero 2 with new heuristics to render flow finding more efficient. At its core, the method employs a hierarchical Monte Carlo Tree Search (MCTS) that first samples congestion hotspots and then selects candidate regulations synthesized by a local proposal engine. Each proposal is evaluated by a fast First-Planned-First-Served (FPFS) allocator to estimate its reward, with these feedbacks guiding the subsequent MCTS exploration.

  Experiments on many pan-European summer-peak traffic days that Regulation Zero delivers promising and consistent performance. Compared to a flight-centric simulated-annealing and NSGA-II baselines, it achieves markedly higher objective improvements, while maintaining a tighter scope of impact on the network. Ablation studies also found that Regulation Cascading could reduce up to 50\% of potential effectiveness. RZ also preserves FPFS fairness and supports expert knowledge injection, offering a pragmatic and low-disruption pathway toward automation in operations.
\end{abstract}

\section{Introduction}
\subsection{Air Traffic Flow \& Capacity Management (ATFCM) Operations and the Need for Decision Support Within Existing Architectures}
\label{subsec:atfm_intro}
To ensure that controller workload remains within safe limits, Traffic Management Initiatives (TMIs) are implemented to prevent any controller from being overwhelmed by excessive aircraft counts at any given time. This concept, known as Demand-Capacity Balancing (DCB), is coordinated by EUROCONTROL's Network Manager in Europe and the Federal Aviation Administration's (FAA) Air Traffic Control System Command Center (ATCSCC) in the United States.

\begin{figure}
  \centering
  \begin{tikzpicture}[x=0.75pt,y=0.75pt,yscale=-1,xscale=1]

\draw   (130,250) -- (190,250) -- (190,292) -- (130,292) -- cycle ;
\draw   (130,130) -- (348,130) -- (348,172) -- (130,172) -- cycle ;
\draw   (210,250) -- (270,250) -- (270,292) -- (210,292) -- cycle ;
\draw   (290,250) -- (350,250) -- (350,292) -- (290,292) -- cycle ;
\draw    (160,250) -- (236.6,171.43) ;
\draw [shift={(238,170)}, rotate = 134.27] [color={rgb, 255:red, 0; green, 0; blue, 0 }  ][line width=0.75]    (10.93,-3.29) .. controls (6.95,-1.4) and (3.31,-0.3) .. (0,0) .. controls (3.31,0.3) and (6.95,1.4) .. (10.93,3.29)   ;
\draw    (240,250) -- (238.05,172) ;
\draw [shift={(238,170)}, rotate = 88.57] [color={rgb, 255:red, 0; green, 0; blue, 0 }  ][line width=0.75]    (10.93,-3.29) .. controls (6.95,-1.4) and (3.31,-0.3) .. (0,0) .. controls (3.31,0.3) and (6.95,1.4) .. (10.93,3.29)   ;
\draw    (328,250) -- (239.49,171.33) ;
\draw [shift={(238,170)}, rotate = 41.63] [color={rgb, 255:red, 0; green, 0; blue, 0 }  ][line width=0.75]    (10.93,-3.29) .. controls (6.95,-1.4) and (3.31,-0.3) .. (0,0) .. controls (3.31,0.3) and (6.95,1.4) .. (10.93,3.29)   ;
\draw   (131,49) -- (349,49) -- (349,91) -- (131,91) -- cycle ;
\draw   (60,20) -- (380,20) -- (380,180) -- (60,180) -- cycle ;
\draw   (70,240) -- (380,240) -- (380,310) -- (70,310) -- cycle ;
\draw   (390,138) -- (500,138) -- (500,180) -- (390,180) -- cycle ;
\draw   (510,138) -- (620,138) -- (620,180) -- (510,180) -- cycle ;
\draw    (450,50) -- (450,138) ;
\draw [shift={(450,140)}, rotate = 270] [color={rgb, 255:red, 0; green, 0; blue, 0 }  ][line width=0.75]    (10.93,-3.29) .. controls (6.95,-1.4) and (3.31,-0.3) .. (0,0) .. controls (3.31,0.3) and (6.95,1.4) .. (10.93,3.29)   ;
\draw    (570,50) -- (570,138) ;
\draw [shift={(570,140)}, rotate = 270] [color={rgb, 255:red, 0; green, 0; blue, 0 }  ][line width=0.75]    (10.93,-3.29) .. controls (6.95,-1.4) and (3.31,-0.3) .. (0,0) .. controls (3.31,0.3) and (6.95,1.4) .. (10.93,3.29)   ;
\draw    (380,50) -- (570,50) ;
\draw    (240,130) -- (240,92) ;
\draw [shift={(240,90)}, rotate = 90] [color={rgb, 255:red, 0; green, 0; blue, 0 }  ][line width=0.75]    (10.93,-3.29) .. controls (6.95,-1.4) and (3.31,-0.3) .. (0,0) .. controls (3.31,0.3) and (6.95,1.4) .. (10.93,3.29)   ;

\draw (135,262) node [anchor=north west][inner sep=0.75pt]   [align=left] {FMP 1};
\draw (169,142) node [anchor=north west][inner sep=0.75pt]   [align=left] {Network Manager};
\draw (215,262) node [anchor=north west][inner sep=0.75pt]   [align=left] {FMP 2};
\draw (295,262) node [anchor=north west][inner sep=0.75pt]   [align=left] {FMP 3};
\draw (96,201) node [anchor=north west][inner sep=0.75pt]   [align=left] {Regulations};
\draw (190,61) node [anchor=north west][inner sep=0.75pt]   [align=left] {CASA Algorithm};
\draw (71,160) node [anchor=north west][inner sep=0.75pt]  [rotate=-270] [align=left] {EUROCONTROL};
\draw (73,298) node [anchor=north west][inner sep=0.75pt]  [rotate=-270] [align=left] {ACCs\\Airports};
\draw (481,62) node [anchor=north west][inner sep=0.75pt]   [align=left] {Delays};
\draw (418,151) node [anchor=north west][inner sep=0.75pt]   [align=left] {Flight 1};
\draw (538,151) node [anchor=north west][inner sep=0.75pt]   [align=left] {Flight 2};
\draw (631,151) node [anchor=north west][inner sep=0.75pt]   [align=left] {...};
\draw (361,262) node [anchor=north west][inner sep=0.75pt]   [align=left] {...};

\end{tikzpicture}
  \caption{The general Flow Regulation Workflow in the European Context.}
  \label{fig:ectl_workflow}
\end{figure}
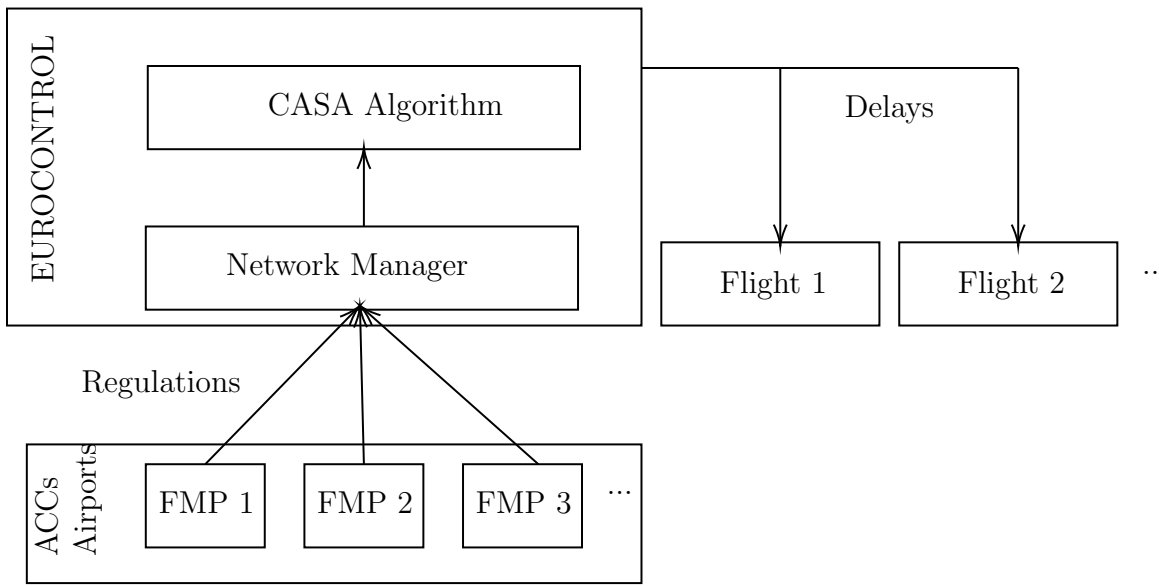

In the European setting, a Flow Management Position (FMP) serves as the interface between local ATFCM partners (Air Traffic Control--ATC, Aircraft Operators--AO, and airports) and the Network Manager (NM). FMPs monitor demand using Monitoring Values (MVs) against capacity values provided to the NM through Traffic Volumes (TVs), and request ATFCM \emph{regulations} when demand exceeds declared capacity.

\begin{figure}
  \centering\begin{tikzpicture}[x=0.75pt,y=0.75pt,yscale=-1,xscale=1]

\draw  [fill={rgb, 255:red, 255; green, 255; blue, 255 }  ,fill opacity=1 ] (320,70) -- (460,70) -- (460,240) -- (320,240) -- cycle ;
\draw  [fill={rgb, 255:red, 255; green, 255; blue, 255 }  ,fill opacity=1 ] (50,60) -- (190,60) -- (190,240) -- (50,240) -- cycle ;
\draw  [fill={rgb, 255:red, 255; green, 255; blue, 255 }  ,fill opacity=1 ] (60,50) -- (200,50) -- (200,230) -- (60,230) -- cycle ;
\draw  [fill={rgb, 255:red, 255; green, 255; blue, 255 }  ,fill opacity=1 ] (70,40) -- (210,40) -- (210,220) -- (70,220) -- cycle ;
\draw  [fill={rgb, 255:red, 255; green, 255; blue, 255 }  ,fill opacity=1 ] (330,60) -- (470,60) -- (470,230) -- (330,230) -- cycle ;
\draw  [fill={rgb, 255:red, 255; green, 255; blue, 255 }  ,fill opacity=1 ] (340,40) -- (480,40) -- (480,220) -- (340,220) -- cycle ;

\draw (145,135) node  [font=\footnotesize] [align=left] {\begin{minipage}[lt]{88.4pt}\setlength\topsep{0pt}
\begin{center}
\textbf{{\footnotesize REGULATION}}
\end{center}
{\footnotesize \textbf{1. Ref. Location:}}\\{\footnotesize LECBBAS}\\{\footnotesize \textbf{2. Flows}}\\{\footnotesize >LEMD}\\{\footnotesize \textbf{3. Time window}}\\{\footnotesize 08:00-09:30}\\{\footnotesize \textbf{4. Entering Flow Rate}}\\{\footnotesize 25}
\end{minipage}};
\draw (18,261) node [anchor=north west][inner sep=0.75pt]   [align=left] {EUROCONTROL Network Manager};
\draw (415,135) node  [font=\footnotesize] [align=left] {\begin{minipage}[lt]{88.4pt}\setlength\topsep{0pt}
\begin{center}
{\footnotesize \textbf{PROGRAM (GDP/AFP)}}
\end{center}
{\footnotesize \textbf{1. Airport or FCA:}}\\{\footnotesize KSFO}\\{\footnotesize \textbf{2. Scope}}\\{\footnotesize To KATL}\\{\footnotesize \textbf{3. Time window}}\\{\footnotesize 08:00-09:30}\\{\footnotesize \textbf{4. Program Rate}}\\{\footnotesize 25}
\end{minipage}};
\draw (400,275) node   [align=left] {\begin{minipage}[lt]{176.8pt}\setlength\topsep{0pt}
\begin{center}
U.S. FAA's Ground Delay and \\Airspace Flow Program
\end{center}

\end{minipage}};

\end{tikzpicture}
  \caption{Regulations (EU) and Flow Programs (US) that will serve as input for slot allocation algorithms.} \label{fig:regulations_and_flow_programs}
\end{figure}
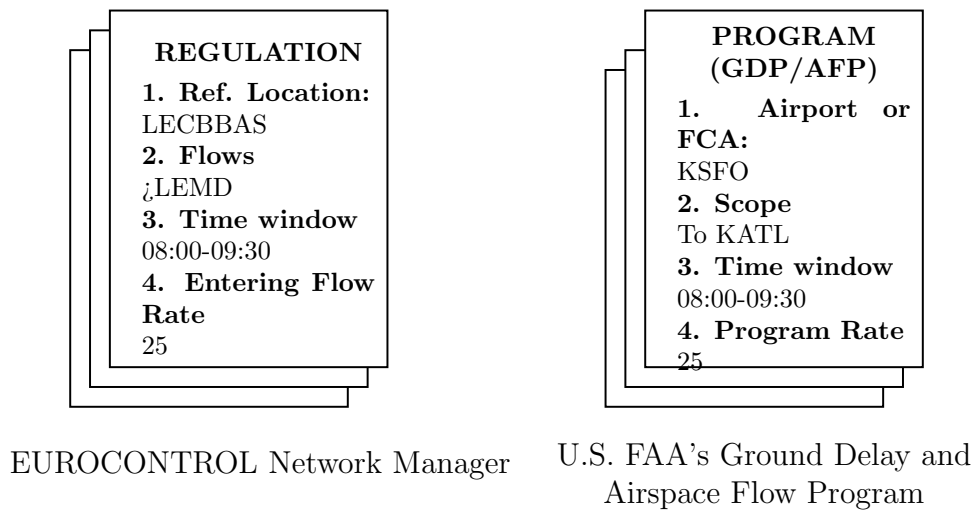

A \textbf{regulation} must specify the \emph{reference location} as the metering point where an FCFS queue is set up, the \emph{time window} during which the queue is active, the \emph{list of flights} (also known as the \emph{flows}) targeted for potential ground delays, and a capped \emph{entry rate} to reduce entries into the area within that time window (Figure \ref{fig:regulations_and_flow_programs}).

The NM may then activate the regulation following coordination,  after which the Computer Assisted Slot Allocation (CASA) algorithm builds the slot list at the declared rates and assigns Calculated Take-Off Times (CTOTs) to captured flights (Figure~\ref{fig:ectl_workflow}) \cite{Eurocontrol_ATFCM_2018}.

Similarly, in the United States, the ATCSCC manages Ground Delay Programs (GDPs) and Airspace Flow Programs (AFPs) by specifying the program window, the affected flight set (through scope, filters, and exemptions), and the target rate. The Traffic Flow Management System (TFMS) then applies the Ration-by-Schedule (RBS) algorithm to assign departure slots and compute EDCTs for impacted flights \cite{FAA_FSM_Users_Guide_2016}.

Although various optimization schemes had been proposed for the DCB problem, commercial implementations of flow control systems worldwide have converged on a single, unified procedure consisting of three steps: flight selection, rate imposition, and First Come First Served (FCFS) queueing for slot assignment. Fundamentally replacing this entrenched workflow would incur prohibitive costs for workforce retraining and require extensive re-certification. Moreover, it would forfeit decades of tacit operational knowledge, resulting in reduced efficiency as untested systems encounter unforeseen scenarios.

This motivates our proposal of Regulation Zero (RZ), a novel framework for automated regulation design that is more compatible with existing flow control operations. However, the conception of any such system must overcome a fundamental technical challenge that is also the primary source of unpredictability in regulation design: the nonlinear and complex interaction between regulations, a phenomenon we term Regulation Cascading (RC). We detail this mechanism in the following section.

\subsection{Regulation Cascading: The Complex Interactions between Regulations, and the Sequential Regulation Scheme}
Designing effective flow regulations is a complex, experience-driven task, especially under uncertain or rapidly evolving conditions like adverse weather. Even with perfect traffic forecasts, the fragmented process by which individual FMP positions at each ACC propose regulations can produce measures that conflict with one another. For instance, during periods of convective weather over Central Europe, adjacent FMPs in Munich, Vienna, and Zurich may independently impose flow regulations to protect their sectors. Without cross-border coordination, these local measures can create overlapping or contradictory restrictions along shared traffic flows: one regulation may inadvertently divert flights into sectors already constrained by another, producing unpredictable cascading effects and increasing the NM's workload as it reconciles competing constraints under time pressure.

\subsubsection{Cascading Mechanism}

However, an important question remains: through what exact mechanism do such cascading effects arise? A baseline regulation strategy is to directly regulate TVs exhibiting DCB imbalances by applying a ``blanket'' entry-rate cap equal to the declared capacity that controllers can safely accommodate. Because each flight usually traverses multiple TVs, this approach leads to three fundamental problems:

\begin{enumerate}
  \item First, the geographic extent of demand redistribution after regulation activation can be too broad to contain effectively. To mitigate this, the EUROCONTROL Flow Concept of Operations \cite{EurocontrolFlowConops2024} explicitly advocates a narrower, more surgical approach that designates the exact groups of flights (i.e., \emph{flows}) to regulate. Although this reduces the risk of secondary hotspots, it introduces some new questions: which flows should be selected, and what specific rate should be capped for each flow, given that the baseline strategy of using a TV's declared capacity does not transfer directly to the flow level.
  \item Second, each regulation reshapes demand elsewhere in the network, so the flights targeted by one FMP may no longer match those observed by another FMP, eroding the shared situational awareness on which coordination depends.
  \item Third, in CASA, all regulations are applied simultaneously to obtain the candidate slots, and if the same flight is targeted by multiple regulations, it receives the maximum delay under the Most Penalizing Regulation (MPR) \cite{EUROCONTROL_ATFCM_Manual_2024}. This delay arbitration mechanism introduces a late-stage demand shift that further contributes to the scheme's unpredictability.
\end{enumerate}

\subsubsection{Regulation Planning is a Markov Decision Process (MDP)}
This analysis suggests that, to address the RC problem, regulations should be planned as an ordered sequence to avoid ambiguity in the demand picture and to prevent late-stage delay arbitration under MPR (Figure \ref{fig:arbitration}). This sequential structure aligns naturally with an MDP, in which each regulatory action induces a state transition. For example, displacing traffic into an adjacent area alters its flow composition and thus affects the next potential regulation in that area.

However, the MDP formulation introduces substantially greater complexity: the planner must account for temporal dependencies and the delayed effects of each regulation. In practice, this means the agent cannot optimize each regulation in isolation; instead, it must learn a policy that anticipates how local relief measures may propagate through the network and inadvertently compromise downstream stability.

\begin{figure}
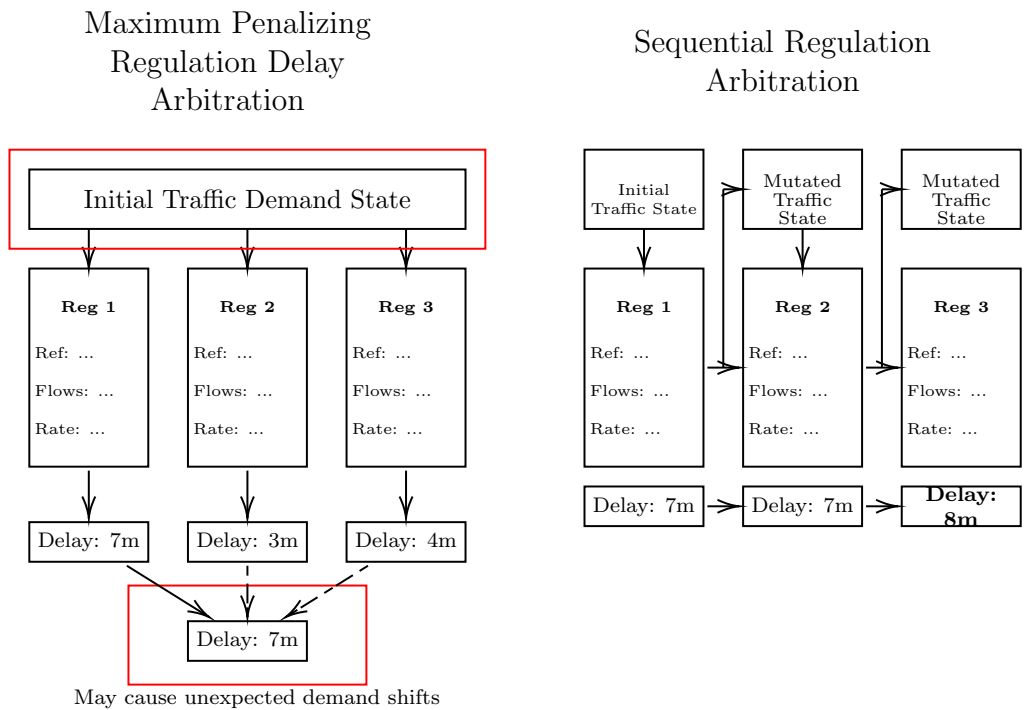

  \centering
  \include{arbitration}
  \caption{Comparison of the MPR and RZ delay-arbitration schemes when the same flight is targeted by two regulations. In the MPR scheme, all regulations are applied in batch, and delays are arbitrated only at the final step, causing unpredictable demand shifts. In the RZ scheme, delays are applied before each subsequent regulation, so the traffic demand state remains accurate at every step. As a result, the last-minute demand shifts observed under MPR are avoided. The MPR scheme also assumes a common demand picture for all regulations during their conception, which may be stale. The sequential scheme does not exhibit this vulnerability.}
  \label{fig:arbitration}
\end{figure}

\subsection{Our Contributions}
The preceding discussion prompted us to design RZ based on Monte Carlo Tree Search (MCTS) \cite{silver2018general} to find \textbf{the optimal sequence of flow regulations} that resolves DCB imbalances at the network level. Because the traffic situation evolves naturally within the framework, RZ aims to support operators in managing complex cross-border regulation interactions that are prone to unpredictability and require substantial coordination effort.

The framework is also compatible, in principle, with existing ATFM workflows worldwide, allowing integration with minimal operational disruption. It preserves fairness through the familiar First Planned First Served (FPFS) principle while delivering a level of network performance that often exceeds what was traditionally achieved only by flight-centric solvers. Moreover, because the search process operates within the regulation space---the same mental model used by flow management experts---as it naturally accommodates expert intuition, enabling experts to constrain and guide the search toward preferred solutions when needed.

While the end goal is still delaying flights, it is crucial to emphasize that Regulation Zero is not a \emph{slot allocation optimizer}, but a dynamic learning system designed to refine regulatory decisions through testing and observing the network responses iteratively. \textbf{This is made evident by noting that Regulation Zero interacts with the FPFS slot allocation machinery, steering it to yield favorable outcomes rather than doing the optimization by itself. In a strict sense, Regulation Zero is a meta-planner for the FPFS slot allocation algorithm.}

\subsection{Paper Organization}
The remainder of this paper is organized as follows. Section 2 reviews the literature on Traffic Management Initiative (TMI) optimization, with emphasis on ground delay strategies. Section 3 introduces the proposed framework, formalized as a cascaded optimization scheme with three components: an MCTS-based regulation search process, a flow-level regulation proposal engine for each overloaded TV, and a slot allocation algorithm for computing individual flight delays. Section 4 describes large-scale experiments simulating several full days of pan-European air traffic during the 2023 summer peak, each involving 20,000-25,000 flights, along with comparison gainst Simulated Annealing (SA) and NSGA-II baselines. We discuss the characteristics of RZ solutions and broader findings in Section 5. Section 6 concludes the paper.

\section{Literature Review}
\begin{figure}
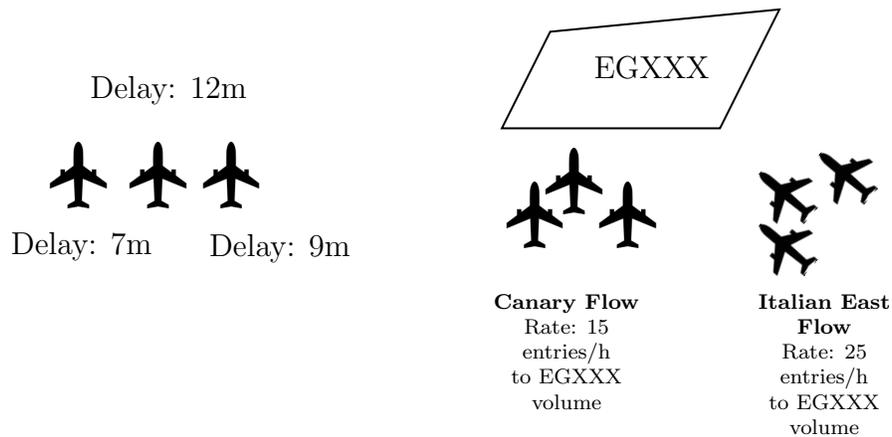

  \centering
  \include{trajectory-space-v-regulation-space}
  \caption{Illustration of Trajectory (left) and Regulation Spaces (right). In Trajectory Space, decision variables are assigned per flight. In Regulation Space, decision variables are per flow.}
  \label{fig:trajvreg_space}
\end{figure}

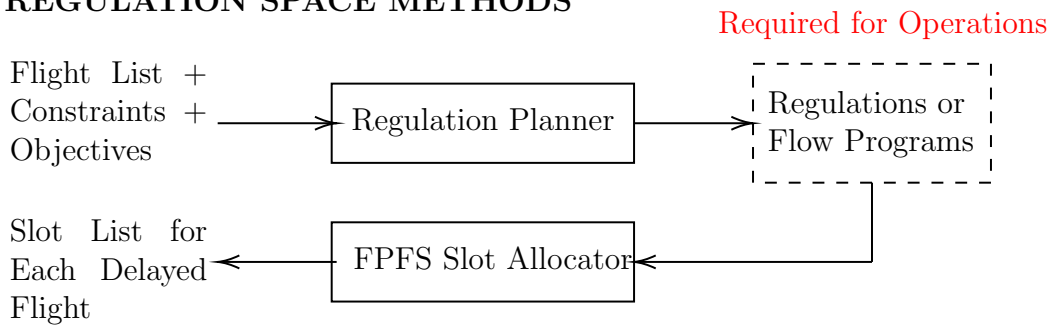
\begin{figure}
  \centering
  \begin{tikzpicture}[x=0.75pt,y=0.75pt,yscale=-1,xscale=1]

\draw   (247.5,70) -- (430,70) -- (430,110) -- (247.5,110) -- cycle ;
\draw    (190,90) -- (248,90) ;
\draw [shift={(250,90)}, rotate = 180] [color={rgb, 255:red, 0; green, 0; blue, 0 }  ][line width=0.75]    (10.93,-3.29) .. controls (6.95,-1.4) and (3.31,-0.3) .. (0,0) .. controls (3.31,0.3) and (6.95,1.4) .. (10.93,3.29)   ;
\draw    (430,90) -- (488,90) ;
\draw [shift={(490,90)}, rotate = 180] [color={rgb, 255:red, 0; green, 0; blue, 0 }  ][line width=0.75]    (10.93,-3.29) .. controls (6.95,-1.4) and (3.31,-0.3) .. (0,0) .. controls (3.31,0.3) and (6.95,1.4) .. (10.93,3.29)   ;
\draw   (247.5,230) -- (400,230) -- (400,270) -- (247.5,270) -- cycle ;
\draw    (190,250) -- (248,250) ;
\draw [shift={(250,250)}, rotate = 180] [color={rgb, 255:red, 0; green, 0; blue, 0 }  ][line width=0.75]    (10.93,-3.29) .. controls (6.95,-1.4) and (3.31,-0.3) .. (0,0) .. controls (3.31,0.3) and (6.95,1.4) .. (10.93,3.29)   ;
\draw    (400,250) -- (458,250) ;
\draw [shift={(460,250)}, rotate = 180] [color={rgb, 255:red, 0; green, 0; blue, 0 }  ][line width=0.75]    (10.93,-3.29) .. controls (6.95,-1.4) and (3.31,-0.3) .. (0,0) .. controls (3.31,0.3) and (6.95,1.4) .. (10.93,3.29)   ;
\draw   (247.5,300) -- (400,300) -- (400,340) -- (247.5,340) -- cycle ;
\draw    (520,320) -- (402,320) ;
\draw [shift={(400,320)}, rotate = 360] [color={rgb, 255:red, 0; green, 0; blue, 0 }  ][line width=0.75]    (10.93,-3.29) .. controls (6.95,-1.4) and (3.31,-0.3) .. (0,0) .. controls (3.31,0.3) and (6.95,1.4) .. (10.93,3.29)   ;
\draw    (520,280) -- (520,320) ;
\draw    (250,320) -- (192,320) ;
\draw [shift={(190,320)}, rotate = 360] [color={rgb, 255:red, 0; green, 0; blue, 0 }  ][line width=0.75]    (10.93,-3.29) .. controls (6.95,-1.4) and (3.31,-0.3) .. (0,0) .. controls (3.31,0.3) and (6.95,1.4) .. (10.93,3.29)   ;
\draw  [dash pattern={on 4.5pt off 4.5pt}] (460,220) -- (580,220) -- (580,280) -- (460,280) -- cycle ;

\draw (256,82) node [anchor=north west][inner sep=0.75pt]   [align=left] {Slot Allocation Algorithm};
\draw (135,95) node   [align=left] {\begin{minipage}[lt]{74.8pt}\setlength\topsep{0pt}
Flight List + Constraints + Objectives
\end{minipage}};
\draw (555,95) node   [align=left] {\begin{minipage}[lt]{74.8pt}\setlength\topsep{0pt}
Slot List for Each Delayed Flight
\end{minipage}};
\draw (81,32) node [anchor=north west][inner sep=0.75pt]   [align=left] {\textbf{TRAJECTORY SOLUTION SPACE METHODS}};
\draw (81,181) node [anchor=north west][inner sep=0.75pt]   [align=left] {\textbf{REGULATION SPACE METHODS}};
\draw (135,245) node   [align=left] {\begin{minipage}[lt]{74.8pt}\setlength\topsep{0pt}
Flight List + Constraints + Objectives
\end{minipage}};
\draw (256,241) node [anchor=north west][inner sep=0.75pt]   [align=left] {Regulation Planner};
\draw (466,232) node [anchor=north west][inner sep=0.75pt]   [align=left] {Regulations or\\Flow Programs};
\draw (257,311) node [anchor=north west][inner sep=0.75pt]   [align=left] {FPFS Slot Allocator};
\draw (135,325) node   [align=left] {\begin{minipage}[lt]{74.8pt}\setlength\topsep{0pt}
Slot List for Each Delayed Flight
\end{minipage}};
\draw (441,192) node [anchor=north west][inner sep=0.75pt]  [color={rgb, 255:red, 255; green, 0; blue, 0 }  ,opacity=1 ] [align=left] {Required for Operations};

\end{tikzpicture}
  \caption{Comparison between the optimization pathways of slot-allocation algorithms and regulation-based methods.}
  \label{fig:compare_reg_spaces}
\end{figure}

\begin{table}[h]
  \centering
  \caption{Comparison between trajectory-space and regulation-space solution representations.} \label{tab:rs_vs_tbo}
  \begin{tabularx}{\linewidth}{p{50pt} X X}
    \toprule
    {\textbf{Aspect}} & {\textbf{Trajectory Space}} & {\textbf{Regulation Space}} \\
    \midrule
    Number of variables & \textbf{Fixed:} It can be represented by a fixed binary delay vector whose length equals the total number of flights. & \textbf{Variable:} The number of regulations to be created can vary from a dozen to hundreds.\\
    \midrule
    Historical dependence & \textbf{State-invariant action set:} The number of changes that can be made to any flight's decision variables depends very little on history (although each change may still be accepted or rejected later by the optimizer). & \textbf{State-dependent action set:} Each regulation proposal must be based on the selection of a reference location (typically a hotspot) and the traffic flows associated with that location. Both pieces of information depend on previously imposed regulations.\\
    \midrule
    Evaluation & \textbf{Direct:} Objective functions are relatively straightforward to compute after a decision-variable change. & \textbf{Multi-stage with a nested slot-allocation algorithm:} Regulations must first be parsed correctly; CASA then assigns delays, after which the objective function can be evaluated.\\
    \midrule
    Fine-tuning & \textbf{Limited:} If flight plans change, a complete re-run is usually required, optionally with an additional convexification term in the objective function. & \textbf{Flexible:} Multiple options are available: tuning the rate, adjusting flow filters, or performing a complete re-run.\\
    \bottomrule
  \end{tabularx}
\end{table}

While flight-centric formulations dominate the DCB optimization literature, regulation-based approaches are often better aligned with operational practice, particularly because they naturally accommodate flow-expert input at the design stage. In this section, we briefly review both approaches and clarify their distinctions. Figure~\ref{fig:trajvreg_space} illustrates the decision variables underlying each approach, while Figure~\ref{fig:compare_reg_spaces} contrasts their respective workflows. We also summarize the key differences in Table~\ref{tab:rs_vs_tbo}.

\subsection{Trajectory Space Slot Optimization Methods}
Classical slot allocation in Air Traffic Flow Management (ATFM) originates from ground-holding formulations that optimally assign departure delays under capacity constraints. The foundational single-airport static problem, as proposed by Richetta and Odoni in \cite{RichettaOdoni1993}, established exact optimization principles for minimizing total delay. This formulation was subsequently extended to multi-airport settings, where coordinated delay assignments across interconnected airports were shown in \cite{VranasBertsimasOdoni1994} to yield globally consistent scheduling solutions. Building on these developments, network-flow models later incorporated en-route capacities and routing decisions, anchoring the minimize-total-delay paradigm and demonstrating scalability to realistic traffic networks \cite{BertsimasPatterson1998,BertsimasPatterson2000}.

Recent advances in ATFM slot allocation and trajectory space optimization cluster around three interrelated methodological categories:

\subsubsection{Stochastic and MILP formulations under capacity and uncertainty}
Extending beyond single-airport scheduling, recent research integrates airport and fix capacities in multi-airport system (MAS) formulations. Wang \emph{et al.} \cite{Wang2023TRC} and Liu \emph{et al.} \cite{Liu2024TJSASS,Liu2025Aerospace} proposed mixed-integer and robust optimization models that explicitly capture uncertainties in flight and execution times through chance-constrained formulations. These models achieve tractable MILP representations that enhance schedule robustness with limited displacement amplification. Complementarily, He and Pan \cite{HePan2024PLOSONE} coupled capacity-envelope estimation with slot allocation, thereby integrating strategic flow control with empirically grounded throughput constraints.

\subsubsection{Market mechanisms and meta-heuristics}
To accommodate heterogeneous airline valuations and preserve privacy in collaborative decision-making, a growing body of research has introduced auction-based and credit-driven reallocation mechanisms for ATFM slots. Schuetz et al. \cite{Schuetz2022ICAS} proposed a privacy-preserving slot market that uses credits and evolutionary optimization to reorder slots based on private airline preferences. In parallel, Lee et al. \cite{Lee2024JATM} proposed an auction-based reallocation scheme that explicitly accounts for airlines' grandfather rights, together with allocation, payment, and compensation rules.

For large and tightly-constrained problems, such as single- and multi-airport settings, enhanced evolutionary and learning-inspired heuristics have delivered high-quality solutions at scale: for example, an estimation-of-distribution metaheuristic coordinating multiple ATFM measures for slot allocation \cite{Tian2021CIE}, and iterated local search / variable neighborhood search tailored to the simultaneous, system-wide slot allocation problem \cite{Pellegrini2011SSRN}. From the airline perspective during severe airspace flow programs, genetic algorithms that encode route and slot decisions have been shown to reduce disruption and cost \cite{Abdelghany2007JATM}. Beyond pure slotting, hybrid metaheuristics designed by \emph{Chaimatanan}, \emph{Delahaye}, and \emph{Mongeau} strategically deconflict trajectories at continental scale jointly adjusting routes, levels and departure times to balance safety and efficiency \cite{Chaimatanan2014CIM}. Complementing these, co-evolutionary genetic algorithms have been applied to ATFM under dynamic capacity, illustrating how cooperative search can handle network-level congestion while respecting capacity constraints and time-slot assignments \cite{Zhang2007ITSC}.

\subsubsection{Network-wide, multi-objective fairness, connectivity, and environmental goals}
Another emerging direction embeds broader operational and environmental criteria into slot allocation. Keskin and Zografos \cite{KeskinZografos2023TRB}, Feng \cite{Feng2023TRD}, and Dalmau \cite{Dalmau2024SIDs} formulated bi-objective and lexicographic optimization models that incorporate fairness across airlines, onward connectivity preservation, and environmental impact mitigation (e.g., noise abatement). These formulations explicitly characterize trade-offs between delay displacement, connectivity protection, and sustainability, contributing to a framework for MAS-aware, data-driven slot allocation that is both robust to uncertainty and sensitive to multi-stakeholder objectives.

Empirical analyses and simulation-based studies further contextualize these approaches. Lau \emph{et al.} \cite{LauBuddeBerlingGollnick2014} evaluated CASA-like heuristics at network scale, benchmarking their performance against optimization-based baselines. Ivanov \emph{et al.} \cite{IvanovEtAl2017} proposed a two-level mixed-integer model that shapes delay distributions to mitigate propagation effects and improve adherence to airport slot assignments. Similarly, Chen et al. \cite{ChenChenSun2017} demonstrated that chance-constrained formulations can incorporate capacity uncertainty while maintaining computational tractability for large-scale instances. Most recently, Berling, Lau, and Gollnick \cite{BerlingLauGollnick2024} integrated a constraint-reconciliation and column-generation allocator into R-NEST, achieving quantifiable reductions in both primary and reactionary delays while respecting operational constraints on the day of execution.

Despite important advances, most prior work frames ATFM as direct, deterministic, flight-level slot/delay allocation, which typically demands deep re-engineering, testing, and certification of legacy systems. This flight-centric view is also not fully aligned with practice: human operational expertise typically resides in regulation space, in terms of rates and counts. This makes controller intuition harder to inject into slot-based optimization and complicates verification of regulation-plan soundness against edge cases.

\subsection{Regulation Space Optimization Methods}
Despite pressing operational needs, optimization at the regulation level remains relatively underexplored. \cite{Dalmau2021Indispensable} observed that not all regulations are equally important, and building on this insight, \cite{Dalmau2022Optimal} proposed constructing regulations greedily by capping traffic flow at declared capacity.

\subsection{Positioning of our Paper}
The foregoing review reveals a clear gap between existing research and operational practice that this paper aims to address:

\begin{itemize}
  \item In current operations, virtually all ATFM systems worldwide implement the same slot-allocation principle, in which delays are produced by algorithms that execute sets of \emph{regulations} manually designed by expert flow managers.
  \item Even forthcoming system evolutions, such as the \emph{Targeted CASA} initiative, focus primarily on providing finer control over traffic selection and rate adjustment rather than fundamentally replacing the underlying FPFS allocation mechanism \cite{EUROCONTROL_NOP_2024_2029}.
  \item Existing formulations of \emph{automated regulation design} insufficiently capture two essential dimensions of a regulatory decision: adjusting the \emph{regulation rate} and explicitly selecting \emph{contributing flows}. Consequently, developing a comprehensive, fully automated regulation-planning methodology remains an open challenge.
  \item EUROCONTROL also noted that interacting flow-management measures can yield complex and sometimes counterintuitive effects \cite{EUROCONTROL_ATFCM_Manual_2024}. Therefore, automation of regulation design remains relatively new and under-researched, despite its potentially high operational impact.
\end{itemize}

Our paper directly addresses this unmet operational and methodological gap. We frame our contribution as a \emph{decision-support system} that extends existing ATFCM operations rather than requiring their replacement. Specifically, we propose develop an optimization-based methodology that automatically constructs regulation plans by jointly determining rate profiles and flow selection under realistic demand and capacity constraints. \textbf{The solutions produced by our framework are, in principle, compatible with both current CASA operations and forthcoming \emph{Targeted CASA}, since the framework supports either single-rate or multi-rate specifications across subflows} \cite{EUROCONTROL_NOP_2024_2029}.

\section{Methodology}\label{sec:method}
\subsection{Problem Formulation}
Owing to the structural resemblance of our problem to sequential decision-making under uncertainty, we adopt the MCTS framework from AlphaZero \cite{silver2018general} to derive the optimal \emph{sequence of regulations}. MCTS balances exploration and exploitation via optimistic tree search \cite{keller2012prost}: at each iteration, candidate regulations are simulated, and observed rewards are back-propagated to refine the value estimates of each action. As the exploration bonus decays, the policy converges toward the highest-value regulation sequence. Since the regulation decision space is multi-dimensional: spanning reference location, time window, flow selection, and capacity rate, and each candidate requires invoking an FCFS allocator for evaluation, we extend the standard MCTS to a \emph{hierarchical} setting to improve computational tractability.

\subsubsection{Definitions} \label{sec:formalization_reg}
Fix a bin width of $\Delta = 15$ minutes. Let the planning day be partitioned into $96$ consecutive time bins indexed by
\[
  \mathcal{T} \equiv \{0,1,\ldots,95\}.
\]
For $t \in \mathcal{T}$, bin $t$ corresponds to the half-open interval $[t\Delta,(t+1)\Delta)$ minutes after midnight.
For integers $a \le b$, we use the discrete interval notation:
\[
  [a,b]_{\mathbb{Z}} \equiv \{a,a+1,\ldots,b\}.
\]

Let $\VV$ denote the finite set of all monitored TVs, and let $\mathcal{F}$ denote the set of flights considered in the planning horizon. For a flight $f \in \mathcal{F}$ traversing volume $v \in \VV$, let $e_{v,f} \in [0, 24\mathrm{h})$ and $x_{v,f} \in (0, 24\mathrm{h}]$ denote its \textbf{entry and exit time}, respectively. Either value may be absent when a flight originates or terminates within $v$, or does not intersect $v$ at all.

We discretize the entry and exit time by defining the entry-bin map

\begin{equation}\label{eq:bin_map}
b(t) \equiv \lfloor t/\Delta \rfloor
\end{equation}
so that the entry bin of $f$ into $v$ is $b(e_{v,f}) \in \mathcal{T}$.

For each volume $v \in \VV$ and $t \in \mathcal{T}$, define the within-bin entry count
\[
  E_{v,t} \;\equiv\; \bigl|\{f \in \mathcal{F} : b(e_{v,f}) = t \}\bigr|.
\]
The rolling-hour (or a 60-min sliding window) entry \textbf{demand} starting at bin $t$ is
\[
  D_{v,t} \;\equiv\; \sum_{k=0}^{3} E_{v,\,t + k},
\]
i.e., the number of entries into $v$ over the hour $[t\Delta, (t+4)\Delta)$.

Let $C_{v,t} \in \mathbb{N}$ denote the declared (rolling-hour) entry \textbf{capacity} for volume $v$ associated with the rolling window starting at bin $t$. We define the excess (overload) as
\[
  G_{v,t} \;\equiv\; (D_{v,t} - C_{v,t})_+.
\]

A \textbf{hotspot} $h = (v, t_i, t_f)$ for volume $v$ is a maximal contiguous interval $[t_i,t_f]_{\mathbb{Z}} \subseteq \mathcal{T}$ such that
\[
  G_{v,t} > 0, \quad \forall\, t \in [t_i,t_f]_{\mathbb{Z}}.
\]

A \textbf{regulation} is a 5-tuple
\begin{equation}\label{eq:regulation_def}
  \rho \;=\; \bigl(v_c,\, t_{i,c},\, t_{f,c},\, \tau_c, \kappa_c \bigr),
\end{equation}
where $v_c \in \VV$ is the control volume (CV), $[t_{i,c}, t_{f,c}]_{\mathbb{Z}} \subseteq \mathcal{T}$ is the regulation's \emph{active window}, $\kappa_c$ is the flow selection operator, and $\tau_c \in \mathbb{N}$ is the hourly entry-rate limit imposed at $v_c$. The CV is a TV at which a metering queue is established; and in the EUROCONTROL operational context, this corresponds to the ``reference location'' \cite{EurocontrolFlowConops2024}. The flow selection operator $\kappa_c$ denotes the mapping
\begin{equation}\label{eq:flow_sel_op}
  \kappa_c:\mathcal{F}\to {0,1}
\end{equation}
which selects the flights to be metered by an FCFS queue at the CV by the regulation.

Because $D_{v,t}$ also includes traffic in the next three bins, a regulation should remain active for the same amount of time, or 45 minutes, after the hotspot ends. We therefore define the effective active window:
\begin{equation}\label{eq:Iceff}
  I_c^{\mathrm{eff}} \;\equiv\; [\,t_{i,c},\; t_{f,c} + \Delta_r\,]_{\mathbb{Z}} \cap \mathcal{T}, \quad \Delta_r = 3.
\end{equation}

During $I_c^{\mathrm{eff}}$, the regulated entry counts should satisfy:
\[
  D_{v_c,t}^{(\rho)} \;\le\; \tau_c, \qquad \forall\, t \in I_c^{\mathrm{eff}},
\]
where $D_{v_c,t}^{(\rho)}$ denotes the rolling-hour entry count at $v_c$ after the regulation-induced metering is applied.

A (partial) \textbf{regulation plan} is an ordered finite sequence of regulations:
\[
  s \;=\; (\rho^1, \rho^2, \ldots, \rho^K), \qquad K \in \mathbb{N}_0.
\]
Its cardinality is $|s| = K$. We regard $s$ as the \emph{state} of the decision process. Given the plan $s$, we write $D_{v,t}^{(s)}$ for the induced rolling-hour entry count after applying all regulations $\rho^{k}$ in $s$.
\subsubsection{FPFS Slot Allocation (FIFO Queue)}\label{subsec:fpfs_slot}
For a regulation $\rho=(v_c,t_{i,c},t_{f,c},\tau_c, \kappa_c)$ with margins $\Delta_e,\Delta_r$.
Let the slot spacing be
\[
  \sigma_c \equiv \frac{60}{\tau_c}\ \text{minutes}, \qquad \tau_c \in \mathbb{N},\ \tau_c \ge 1,
\]
and define the slot sequence anchored at $a_c = t_{i,c} - \Delta_e$ by
\[
  s_{c,0} \equiv a_c,\qquad s_{c,m} \equiv s_{c,0} + m\,\sigma_c,\quad m \in \mathbb{N}_0.
\]
Let the \textbf{set of regulated flights} be
\begin{equation}\label{eq:regulated_flight_list}
  \mathcal{F}_c \equiv \left\{f\in\mathcal{F}:\ f\text{ traverses }v_c,\ \hat e^{(s)}_{v_c,f}\in I_c^{eff},\ \kappa_c(f)=1\right\}.
\end{equation}
Order $\mathcal{F}_c$ by nondecreasing planned entry times and a fixed deterministic tie-breaker (e.g., by flight identifier):
\[
  e_{v_c,f^{(1)}} \le e_{v_c,f^{(2)}} \le \cdots \le e_{v_c,f^{(N_c)}},\quad N_c \equiv |\mathcal{F}_c|.
\]
Define $m_0 \equiv -1$ and, for $j=1,\ldots,N_c$,
\begin{equation}
  m_j \equiv \max\!\left\{\, m_{j-1}+1,\ \left\lceil \frac{e_{v_c,f^{(j)}} - s_{c,0}}{\sigma_c} \right\rceil \right\},
  \qquad
  \hat e_{v_c,f^{(j)}}^{(\rho)} \equiv s_{c,0} + m_j\,\sigma_c.
\end{equation}
The \textbf{concrete delay} (in minutes) assigned to $f^{(j)}$ by $\rho$ is
\begin{equation}
  d_{f^{(j)}}^{(\rho)} \equiv \hat e_{v_c,f^{(j)}}^{(\rho)} - e_{v_c,f^{(j)}} \;\;\in\; [0,\infty).
\end{equation}
For flights not subject to $\rho$ (i.e., $f \notin \mathcal{F}_c$), we set $\hat e_{v_c,f}^{(\rho)} \equiv e_{v_c,f}$ and $d_f^{(\rho)} \equiv 0$.

\subsubsection{Objective Function}\label{subsec:objective_fn}

Given a (partial) regulation plan $s=(\rho^1,\ldots,\rho^K)$, we define a scalar objective $J(s)$ to be minimized.

We assume that the plan-induced delay $d_f^{(s)}$ shifts the entire trajectory of flight $f$ uniformly in time (i.e., without wind-speed compensation). For any TV $v\in\VV(f)$, the revised (post-regulation) entry time is
\[
  \hat e_{v,f}^{(s)} \;\equiv\; e_{v,f} + d_f^{(s)}.
\]
In the absence of regulation, $d_f^{(s)}=0$, and thus $\hat e_{v,f}^{(s)}\equiv e_{v,f}$.

As in Section \ref{sec:formalization_reg}, the within-bin entry counts induced by regulation plan $s$ are
\[
  E_{v,t}^{(s)} \;\equiv\; \bigl|\{\, f\in\mathcal{F} : v\in\VV(f)\ \text{and}\ b(\hat e_{v,f}^{(s)})=t \,\}\bigr|,
  \qquad v\in\VV,\ t\in\mathcal{T}.
\]
The post-regulation rolling-hour entry demand is
\[
  D_{v,t}^{(s)} \;\equiv\; \sum_{k=0}^{3} E_{v,t+k}^{(s)},\qquad v\in\VV,\ t\in\mathcal{T},
\]
with the convention that $E_{v,t}^{(s)}=0$ for $t\notin\mathcal{T}$.

\paragraph{Objective.}
Let $C_{v,t}\in\mathbb{N}$ be the hourly entry capacity for volume $v$ associated with the rolling window
starting at bin $t$. For weights $w_{\mathrm{DELAY}}, w_{\mathrm{REG}} \geq 0$, define
\begin{equation}\label{eq:j_eq}
  J(s) \;=\; J_{\mathrm{CAP}}(s) \;+\; w_{\mathrm{DELAY}}\, J_{\mathrm{DELAY}}(s),
\end{equation}
with
\[
  J_{\mathrm{CAP}}(s) \;\equiv\; \sum_{v\in\VV} \sum_{t\in\mathcal{T}} \bigl(D_{v,t}^{(s)} - C_{v,t}\bigr)_+,
  \qquad
  J_{\mathrm{DELAY}}(s) \;\equiv\; \sum_{f\in\mathcal{F}} d_f^{(s)}.
\]

\paragraph{Optimization problem statement.}
Let $\mathcal{S}$ denote the set of admissible regulation plans $s=(\rho^1,\ldots,\rho^K)$. Under the FPFS slot allocator, the plan-induced quantities $E^{(s)}$, $D^{(s)}$, and $d^{(s)}$ are defined as above. The planning problem is to find $s^*$ that minimizes $J(s)$:
\[
  \boxed{s^* = \arg\min_{\,s \in \mathcal{S}} \; J(s)}.
\]

\subsection{Flow-Centric DCB Heuristics}

Based on the objective function in Section \ref{subsec:objective_fn}, we propose two heuristics to find promising flows to regulate. However it is important to stress that these heuristics are not prescriptive because flow and rate selection are strongly coupled, so even the best rate choice may still degrade network performance.

\subsubsection{Flight and Flow Footprint}\label{subsubsec:footprint_and_time_alignment}
For a flight $f \in \mathcal{F}$, the ordered TV path of $f$ by their corresponding entry time $\{e_{v,f}\}$ is
\begin{equation}
	L_f \equiv \bigl(v_1(f), v_2(f), \ldots, v_{K_f}(f)\bigr),
	\qquad
	v_k(f) \in \VV.
\end{equation}
Its TV footprint is the set
\begin{equation}
	\operatorname{Foot}(f) \equiv \{v_1(f), v_2(f), \ldots, v_{K_f}(f)\} \subseteq \VV.
\end{equation}
For a flow $\CC_j \subseteq \mathcal{F}$, the flow footprint is
\begin{equation}
	\operatorname{Foot}(\CC_j) \equiv \bigcup_{f \in \CC_j} \operatorname{Foot}(f).
\end{equation}

For each volume $v$ in $\operatorname{Foot}(\CC_j)$, we define its associated \emph{flow volume's touched window} as:
\begin{equation}
	\II_{v,\CC_j} = [b(e_{v,min}), b(e_{v,max})],
\end{equation}
where
\begin{align}
	e_{v,min} &= \min_f \{e_{v,f}, f \in \CC_j \} \\
	e_{v,max} &= \max_f \{e_{v,f}, f \in \CC_j\}
\end{align}
which is the earliest and latest entry time of the flow $\CC_j$ into volume $v$ accordingly. 
\subsubsection{Nominal Relief (NomRel) and Induced Overload (InLoad)}\label{subsubsec:nomrel_inload}
We define the set $\HV(\CC_j)$ of ``hot volumes'' associated with the flow $\CC_j$ as:
\begin{equation}
	\HV(\CC_j) = \operatorname{Foot}(\CC_j) \cap \{ v \in \VV: \exists t \in \II_{v,\CC_j} \quad D_{v,t} \geq C_{v,t} \},
\end{equation}
in other words, the traffic volumes where the flow "touch" at least one time bin that observes overload. We also define the flow's ``touched window'' as:
\begin{equation}
	\TC(\CC_j) = \{ (v, \II_{v, \CC_j}): v \in \operatorname{Foot}(\CC_j) \},
\end{equation}
and the flow's ``hot cells'' as:
\begin{equation}
	\HC(\CC_j) = \{ (v, \II_{v, \CC_j} \cap \{ t: D_{v,t} \geq C_{v,t} \}): v \in \operatorname{Foot}(\CC_j) \}.
\end{equation}

Intuitively, when we select a flow to solve potentially many hotspots at the same time, our primary goal is to relieve the excess traffic at all hot cells, while measuring the potential ``induced load'' due to demand shifts at the time bins that immediately follow the ``touched window.'' 

\begin{figure}
	\centering
	\includegraphics[width=\linewidth]{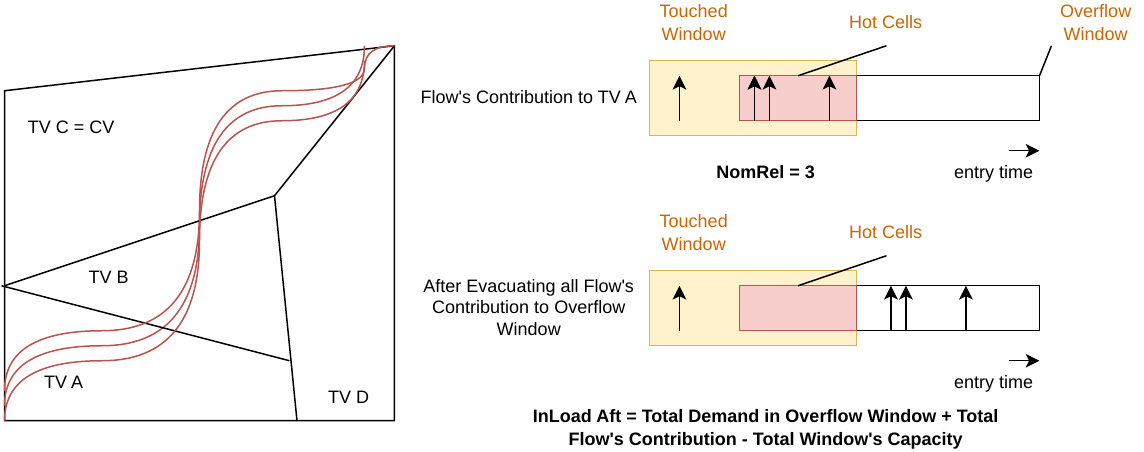}
	\caption{Illustration of the computation of the NomRel and InLoad heuristics. In essence, NomRel shows optimistically how much overload could be relieved from the current hotspots, whereas InLoad shows pessimistically how much secondary overload will be induced elsewhere in the network.}
	\label{fig:rzheuristics}
\end{figure}

The \emph{Nominal Relief} is proposed to measure the excess relief potential if we ``evacuate'' all of the flow's contribution to demand outside its hot cells:
\begin{equation}
	\operatorname{NomRel} (\CC_j) = \sum_{v \in \VV} \sum_{t \in \HC(\CC_j)} D_{v,t,\CC_j},
\end{equation}
where $D_{v,t,\CC_j}$ is the demand attributed to flow $\CC_j$. The higher $\operatorname{NomRel}$ is, the greater the potential for regulating a flow to bring positive benefits to the network.

As regulations only shift traffic to later time bins, they increase the risk of inducing secondary hotspots or worsening excess traffic in already overloaded cells. The time bins that immediately follow the touched window present the highest risk. We define the \emph{overflow window} as:

\begin{equation}
	\operatorname{OC}(\CC_j) = \{ (v, \II_{v, \CC_j}^+): v \in \operatorname{Foot}(\CC_j) \}, 
\end{equation}
where
\begin{equation}
	\II_{v, \CC_j}^+ = \{ t+1, t+2, t+3, t+4 \}; \quad \quad t = b(e_{v,max})
\end{equation}
or 4 time bins that immediately follow the flow volume's touched window. 

\begin{equation}
	\operatorname{InLoad\_Aft}(\CC_j) = \sum_v \left(\sum_{t \in \II_{v,\CC_j}^+} D_{v,t} + \sum_{t \in \II_{v,\CC_j}} D_{v,t,\CC_j} - \sum_{t \in \II_{v,\CC_j}^+} C_{v,t} \right)_+,
\end{equation}
and compared to the before-regulation excess count in the overflow window:
\begin{equation}
		\operatorname{InLoad\_Bef}(\CC_j) = \sum_v \left(\sum_{t \in \II_{v,\CC_j}^+} D_{v,t} - \sum_{t \in \II_{v,\CC_j}^+} C_{v,t} \right)_+.
\end{equation}

The InLoad heuristics is defined as:
\begin{equation}
	\operatorname{InLoad} = \operatorname{InLoad\_Bef} - \operatorname{InLoad\_Aft}.
\end{equation}

\subsubsection{Empirical Study of the Heuristics Effectiveness}
To quantify the effectiveness of the proposed heuristics, we compare the heuristic values against the general reward function $\Delta J = J_{\text{pre\_regulation}} - J_{\textit{post\_regulation}}$, as well as the same reward function scoped only to the hot cells $\operatorname{HC}$ for NomRel, \emph{after adjusting for the optimal rate that yields the highest objective value}, for 5,620 flows extracted from randomly selected hotspots from the initial (pre-regulated) data.

\begin{figure}[htbp]
	\centering
	\begin{subfigure}[b]{0.48\textwidth}
		\centering
		\includegraphics[width=\textwidth]{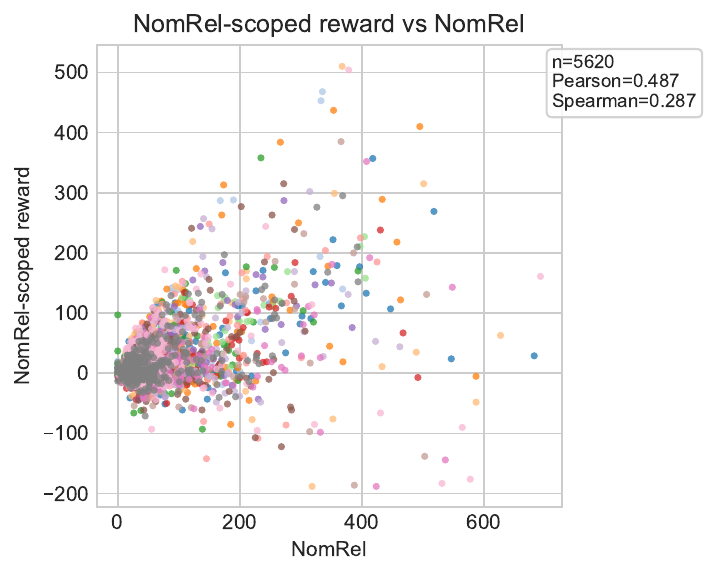}
		\caption{NomRel vs Rate-Optimal Excess Relief scoped to Hot Cells}
		\label{fig:nomrel_v_nomrel_scoped}
	\end{subfigure}
	\hfill
	\begin{subfigure}[b]{0.48\textwidth}
		\centering
		\includegraphics[width=\textwidth]{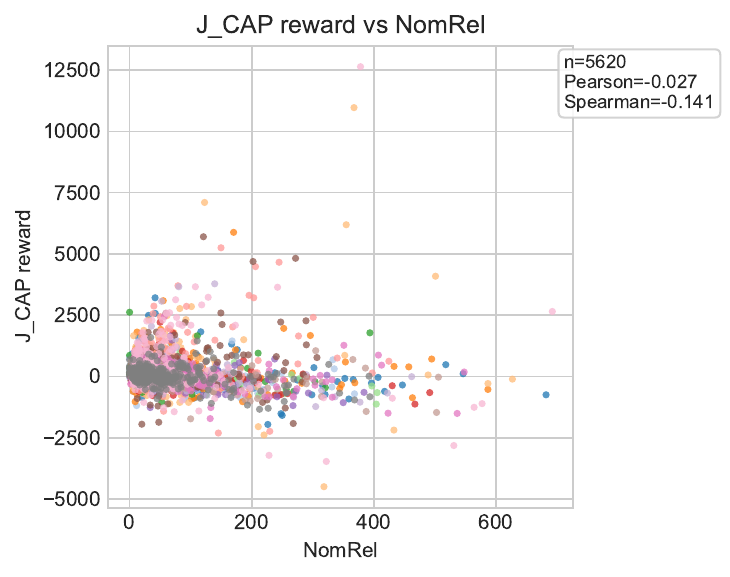}
		\caption{NomRel vs Actual Rate-Optimal Network-wide Excess Relief}
		\label{fig:nomrel_v_reward}
	\end{subfigure}
	\caption{NomRel's flow selection power adjusted for the optimal entry rate.}
	\label{fig:nomrel_performance}
\end{figure}

From Figure \ref{fig:nomrel_v_nomrel_scoped}, it could be seen that the entry rate setting does not weaken the correlation between NomRel and $J_{CAP}$ within hot cells too much. However, this relationship largely disappears when $J_{CAP}$ is evaluated over all traffic volumes and time bins, where the correlation becomes almost non-existent.

\begin{figure}
	\centering
	\includegraphics[width=0.5\linewidth]{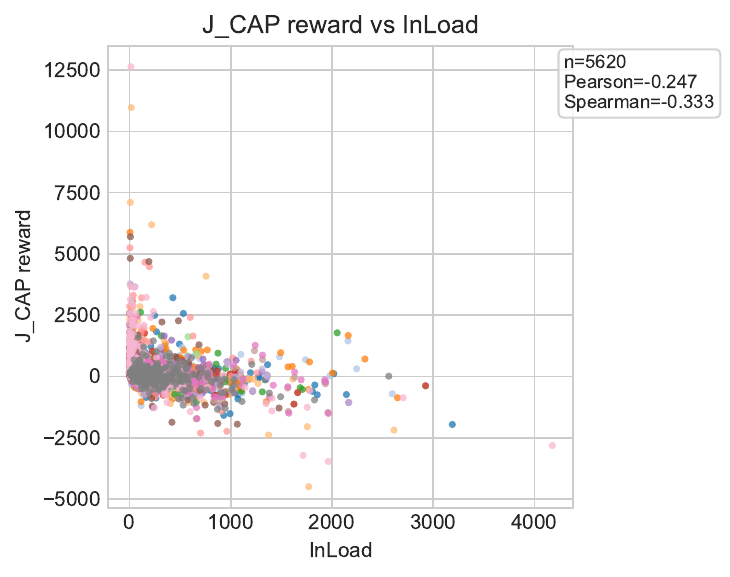}
	\caption{InLoad vs Actual Rate-Optimal Network-wide Excess Relief}
	\label{fig:jcaprewardinload}
\end{figure}

Figure \ref{fig:jcaprewardinload} shows a similar pattern: the overall correlation between rate-optimal $J_{CAP}$ and InLoad remains weak. Taken together, the NomRel and InLoad results indicate that entry rate capping heavily influences the final outcome, and that rate setting cannot be easily separated from flow selection cleanly. Nevertheless, the figures also suggest that there exist specific regimes in which these heuristics can be exploited to support flow selection within the Regulation Zero framework.

\begin{figure}[htbp]
	\centering
	\begin{subfigure}[b]{0.48\textwidth}
		\centering
		\includegraphics[width=\textwidth]{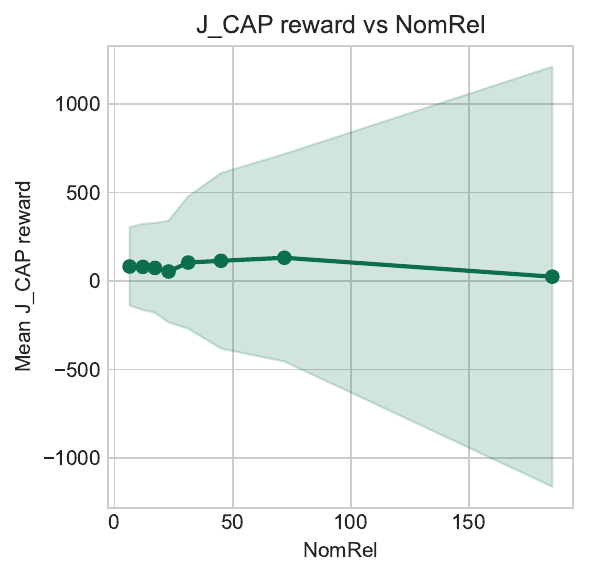}
		\caption{Histogram of $J_{CAP}$ grouped by NomRel}
		\label{fig:bintrend_jcap_nomrel}
	\end{subfigure}
	\hfill
	\begin{subfigure}[b]{0.48\textwidth}
		\centering
		\includegraphics[width=\textwidth]{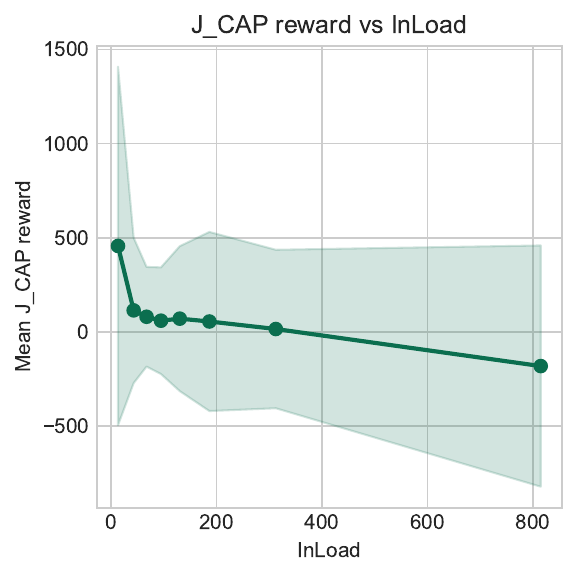}
		\caption{Histogram of $J_{CAP}$ grouped by InLoad}
		\label{fig:bintrend_jcap_inload}
	\end{subfigure}
	\caption{Bin Trend Plots of NomRel and InLoad against $J_{CAP}$.}
	\label{fig:bin_trend_plots}
\end{figure}

Figure \ref{fig:bin_trend_plots} shows the general trend of increasing variance of $J_{CAP}$ as both NomRel and InLoad increase. This trend is driven mostly by the number of flights in the flow. Intuitively, the more flights there are in a flow, the larger the NomRel score, and the greater the potential excess relief and induced load that shifting the flow could have on the network. However, it can be seen that for NomRel, there exists a ``sweet spot'' between 25 and 60, where the variance on the positive half of $J_{CAP}$ increases much more rapidly than the general trend. For InLoad, the low-InLoad regime, where the InLoad value is less than 80, has the best chance of yielding a good $J_{CAP}$ score after rate tuning.

These findings can be explained as follows: a good flow candidate is one that is not too large in size; otherwise, it can significantly increase the risk of inducing secondary overload. The best candidates usually have ample capacity in the immediate time bins following the hotspots (i.e., the ``valleys'' following the ``peaks'' in the demand histogram). However, it is usually necessary to test several candidates that appear similar in terms of heuristics, as $J_{CAP}$ cannot be reliably predicted from the heuristics alone.

\subsection{The Regulation Zero Framework}

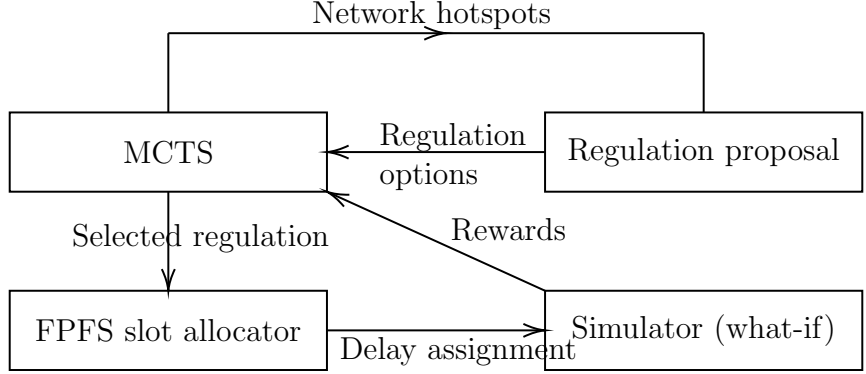
\begin{figure}
  \centering\begin{tikzpicture}[x=0.75pt,y=0.75pt,yscale=-1,xscale=1]

\draw   (340,70) -- (500,70) -- (500,110) -- (340,110) -- cycle ;
\draw   (70,70) -- (230,70) -- (230,110) -- (70,110) -- cycle ;
\draw   (340,160) -- (500,160) -- (500,200) -- (340,200) -- cycle ;
\draw   (70,160) -- (230,160) -- (230,200) -- (70,200) -- cycle ;
\draw    (340,90) -- (232,90) ;
\draw [shift={(230,90)}, rotate = 360] [color={rgb, 255:red, 0; green, 0; blue, 0 }  ][line width=0.75]    (10.93,-3.29) .. controls (6.95,-1.4) and (3.31,-0.3) .. (0,0) .. controls (3.31,0.3) and (6.95,1.4) .. (10.93,3.29)   ;
\draw    (150,110) -- (150,158) ;
\draw [shift={(150,160)}, rotate = 270] [color={rgb, 255:red, 0; green, 0; blue, 0 }  ][line width=0.75]    (10.93,-3.29) .. controls (6.95,-1.4) and (3.31,-0.3) .. (0,0) .. controls (3.31,0.3) and (6.95,1.4) .. (10.93,3.29)   ;
\draw    (230,180) -- (338,180) ;
\draw [shift={(340,180)}, rotate = 180] [color={rgb, 255:red, 0; green, 0; blue, 0 }  ][line width=0.75]    (10.93,-3.29) .. controls (6.95,-1.4) and (3.31,-0.3) .. (0,0) .. controls (3.31,0.3) and (6.95,1.4) .. (10.93,3.29)   ;
\draw    (340,160) -- (231.82,110.83) ;
\draw [shift={(230,110)}, rotate = 24.44] [color={rgb, 255:red, 0; green, 0; blue, 0 }  ][line width=0.75]    (10.93,-3.29) .. controls (6.95,-1.4) and (3.31,-0.3) .. (0,0) .. controls (3.31,0.3) and (6.95,1.4) .. (10.93,3.29)   ;
\draw    (150,30) -- (288,30) ;
\draw [shift={(290,30)}, rotate = 180] [color={rgb, 255:red, 0; green, 0; blue, 0 }  ][line width=0.75]    (10.93,-3.29) .. controls (6.95,-1.4) and (3.31,-0.3) .. (0,0) .. controls (3.31,0.3) and (6.95,1.4) .. (10.93,3.29)   ;
\draw    (150,30) -- (150,70) ;
\draw    (420,30) -- (420,70) ;
\draw    (290,30) -- (420,30) ;

\draw (420,90) node   [align=left] {\begin{minipage}[lt]{108.8pt}\setlength\topsep{0pt}
\begin{center}
Regulation proposal
\end{center}

\end{minipage}};
\draw (150,90) node   [align=left] {\begin{minipage}[lt]{108.8pt}\setlength\topsep{0pt}
\begin{center}
MCTS
\end{center}

\end{minipage}};
\draw (420,180) node   [align=left] {\begin{minipage}[lt]{108.8pt}\setlength\topsep{0pt}
\begin{center}
Simulator (what-if)
\end{center}

\end{minipage}};
\draw (150,180) node  [font=\normalsize] [align=left] {\begin{minipage}[lt]{108.8pt}\setlength\topsep{0pt}
\begin{center}
FPFS slot allocator
\end{center}

\end{minipage}};
\draw (255,74) node [anchor=north west][inner sep=0.75pt]   [align=left] {Regulation \\ options};
\draw (100,125) node [anchor=north west][inner sep=0.75pt]   [align=left] {Selected regulation};
\draw (235,182) node [anchor=north west][inner sep=0.75pt]   [align=left] {Delay assignment};
\draw (291,122) node [anchor=north west][inner sep=0.75pt]   [align=left] {Rewards};
\draw (221,12) node [anchor=north west][inner sep=0.75pt]   [align=left] {Network hotspots};

\end{tikzpicture}
  \caption{Regulation Zero Key Components}
  \label{fig:rz_key_components}
\end{figure}

Figure~\ref{fig:rz_key_components} outlines the key components of the Regulation Zero framework. The MCTS maintains a search tree of partial regulation plans and their associated rewards, defined as the objective-improvement score $\Delta J = J_{\text{pre\_regulation}} - J_{\text{post\_regulation}}$.
In each simulation, the search begins from an empty partial plan $s_0$ and expands by selecting a hotspot where a new regulation may be applied. The Regulation Proposal engine then takes the list of flights contributing to the hotspot, groups related flights into flows based on their impact profiles, and evaluates a set of candidate entry rate reductions using the FPFS Slot Allocator (Section \ref{subsec:fpfs_slot}). The most promising proposals are then added to the tree, and the process restarts at a new hotspot selected by the MCTS.
\subsubsection{Regulation Proposal Engine}\label{subsec:proposal_engine}
Given a hotspot $h \equiv (v_h, t_{i,h}, t_{f,h})$ selected by MCTS from the current partial plan $s$, the proposal engine generates a small, high-quality set of candidate regulations $\rho=(v_c,t_{i,c},t_{f,c},\tau_c, \kappa_c)$ with $v_c=v_h$ to expand the search tree. The engine proceeds in three steps: (i) extracting flows relevant to $h$ by detecting communities in a flight-similarity graph derived from their ``impact footprints,'' (ii) testing multiple entry-rate reduction values, and (iii) evaluating the immediate reward of applying each regulation.

\paragraph{Hotspot-relevant flight set}
We define the flight set associated with a hotspot $h$ in Eq.~(\ref{eq:regulated_flight_list}). By construction, $\mathcal{F}_h$ includes all flights whose planned entries into $v_h$ contribute to at least one overloaded rolling-hour bin in $[t_{i,h},t_{f,h}]_{\mathbb{Z}}$.

\paragraph{Flight Impact Similarity}
Given $f,g \in \mathcal{F}_h$, we define their \textbf{demand impact similarity} as the Jaccard similarity between their footprints:
\begin{equation}\label{eq:jaccard}
  s_h(f,g) \;\equiv\; \frac{\bigl| \operatorname{Foot}(f) \cap \operatorname{Foot}(g) \bigr|}{\bigl| \operatorname{Foot}(f) \cup \operatorname{Foot}(g) \bigr|} \;\;\in\; [0,1],
\end{equation}
with the convention $s_h(f,g)=0$ if both sets are empty. Intuitively, $s_h$ is large for flights that cross similar volume set and small otherwise, making it a proxy for whether a single regulation can coherently meter both flights.

Figure~\ref{fig:jaccardexample} illustrates this computation. For two flights contributing to hotspot EGLMU during a given time segment, we retrieve their TV footprints and count the shared TVs. The Jaccard similarity is then the ratio of the intersection cardinality to the union cardinality, which in this example equals $1/3$.

\begin{figure}
  \centering
  \begin{tikzpicture}[x=0.75pt,y=0.75pt,yscale=-1,xscale=1]
	
	\draw   (210,60) -- (280,60) -- (280,80) -- (210,80) -- cycle ;
	\draw   (290,60) -- (360,60) -- (360,80) -- (290,80) -- cycle ;
	\draw   (370,60) -- (440,60) -- (440,80) -- (370,80) -- cycle ;
	\draw   (450,60) -- (520,60) -- (520,80) -- (450,80) -- cycle ;
	\draw   (210,90) -- (280,90) -- (280,110) -- (210,110) -- cycle ;
	\draw   (290,90) -- (360,90) -- (360,110) -- (290,110) -- cycle ;
	\draw   (370,90) -- (440,90) -- (440,110) -- (370,110) -- cycle ;
	\draw   (450,90) -- (520,90) -- (520,110) -- (450,110) -- cycle ;
	
	\draw (245,70) node  [font=\scriptsize] [align=left] {\begin{minipage}[lt]{47.6pt}\setlength\topsep{0pt}
			\begin{center}
				LFMWMFDZ
			\end{center}
			
	\end{minipage}};
	\draw (325,70) node  [font=\scriptsize,color={rgb, 255:red, 255; green, 0; blue, 0 }  ,opacity=1 ] [align=left] {\begin{minipage}[lt]{47.6pt}\setlength\topsep{0pt}
			\begin{center}
				EGLMU
			\end{center}
			
	\end{minipage}};
	\draw (405,70) node  [font=\scriptsize] [align=left] {\begin{minipage}[lt]{47.6pt}\setlength\topsep{0pt}
			\begin{center}
				LFMAIET
			\end{center}
			
	\end{minipage}};
	\draw (485,70) node  [font=\scriptsize] [align=left] {\begin{minipage}[lt]{47.6pt}\setlength\topsep{0pt}
			\begin{center}
				LFMMFDZ
			\end{center}
			
	\end{minipage}};
	\draw (245,100) node  [font=\scriptsize] [align=left] {\begin{minipage}[lt]{47.6pt}\setlength\topsep{0pt}
			\begin{center}
				LFMWMFDZ
			\end{center}
			
	\end{minipage}};
	\draw (325,100) node  [font=\scriptsize,color={rgb, 255:red, 255; green, 0; blue, 0 }  ,opacity=1 ] [align=left] {\begin{minipage}[lt]{47.6pt}\setlength\topsep{0pt}
			\begin{center}
				EGLMU
			\end{center}
			
	\end{minipage}};
	\draw (405,100) node  [font=\scriptsize] [align=left] {\begin{minipage}[lt]{47.6pt}\setlength\topsep{0pt}
			\begin{center}
				LFMRAES
			\end{center}
			
	\end{minipage}};
	\draw (485,100) node  [font=\scriptsize] [align=left] {\begin{minipage}[lt]{47.6pt}\setlength\topsep{0pt}
			\begin{center}
				LFFOPKZ
			\end{center}
			
	\end{minipage}};
	\draw (365,150) node   [align=left] {\begin{minipage}[lt]{346.8pt}\setlength\topsep{0pt}
			$\displaystyle {\textstyle d_{h}( f,g) =\frac{|\{LFMWMFDZ,\ EGLMU\} |}{|\{LFMWMFDZ,\ EGLMU,LFMAIET,LFMMFDZ,LFMRAES,LFFOPKZ\}} =\frac{1}{3}}$
	\end{minipage}};
	\draw (176,62) node [anchor=north west][inner sep=0.75pt]   [align=left] {$\displaystyle f$};
	\draw (176,90) node [anchor=north west][inner sep=0.75pt]   [align=left] {$\displaystyle g$};

\end{tikzpicture}
  \caption{Demand-impact similarity calculation for two flights $f, g$ given the hotspot $h$ EGLMU.}
  \label{fig:jaccardexample}
\end{figure}

\paragraph{Graph construction and community detection for flow extraction}\label{subsub:flow_x}
We construct a flight footprint similiarity graph
$$
G_h \;=\; (\VV_h^{\mathrm{graph}},\, \mathcal{E}_h,\, w),
\qquad
\VV_h^{\mathrm{graph}} \equiv \mathcal{F}_h,
$$
with binary edge weights
$$
w_{fg} \;=\;
\begin{cases}
  1 \quad\text{if $s_h(f,g) \ge \tau_{\min}$,}\\
  0 \quad\text{otherwise.}
\end{cases}
$$
That is, we retain an edge between $f$ and $g$ only when their footprints overlap sufficiently. As $\tau_{\min}$ increases, the extracted flows become smaller. In practice, we find that values of $\tau_{\min}$ between 0.6 and 0.8 tend to yield the best results: large flows are more likely to induce secondary hotspots elsewhere in the network, whereas small flows risk increasing the number of decision variables.

Leiden algorithm then partitions $\mathcal{F}_h$ into communities $\mathcal{P}_h=\{\CC_1,\ldots,\CC_R\}$, which iteratively refines the partition to maximize modularity \cite{traag2019louvain}. From (\ref{eq:flow_sel_op}), let $\kappa_{\mathcal{C}}:\mathcal{F}\to\{0,1\}$ denote the flow selection operator for community $\mathcal{C}$, i.e.,
$$
\kappa_{\mathcal{C}}(f) \equiv \mathbf{1}[f\in\mathcal{C}],
$$
and thus we obtain a flow candidate
$$
\rho \equiv (v_h, t_i, t_f, \cdot, \kappa_{\mathcal{C}}).
$$
The remaining component still required to form a concrete regulation proposal is the new entry rate $\tau$, which we will detail its selection next.

\begin{figure}
  \centering
  \includegraphics[width=.75\linewidth]{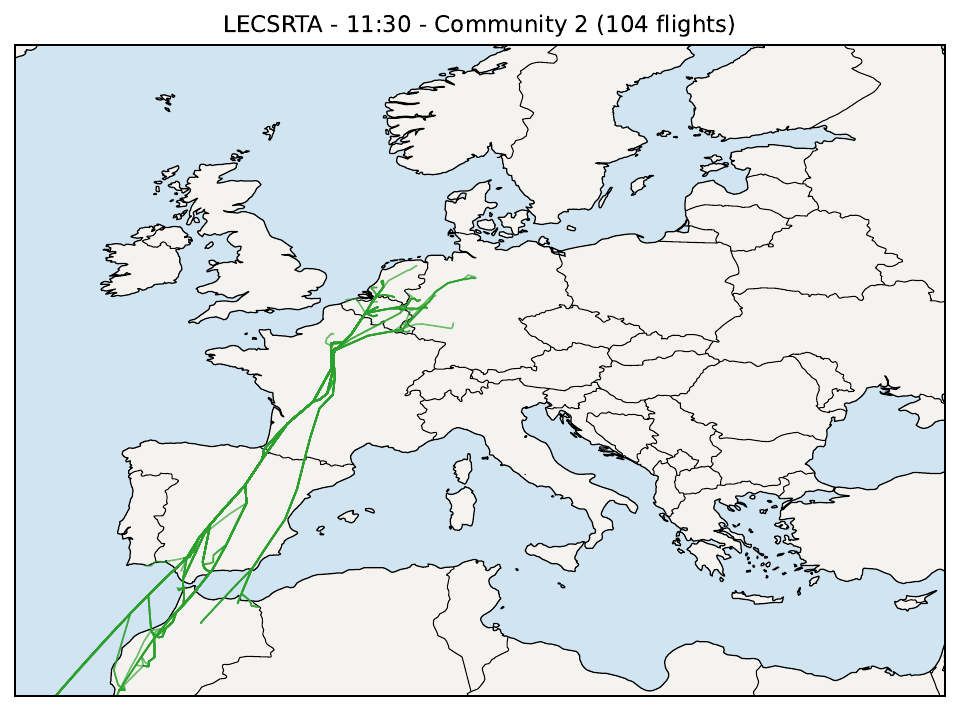}
  \caption{Sample flow extracted by the community detection algorithm for hotspot segment LECSRTA (11:30--12:15, 23/07/2023), as a candidate for regulation. The flow includes flights traversing Madrid and Paris airspace.} \label{fig:flow_x_sample}
\end{figure}

\paragraph{Flow and Rate Selection}
Because evaluating any regulation requires running the slot allocator (Section~\ref{subsec:fpfs_slot}), which is computationally expensive, we restrict our attention to  only a small set of candidate flows for entry-rate tuning and subsequent evaluation. The goal of the search is to find up to $K$ proposals $\rho = (v_h, t_{i,h}, t_{f,h}, \tau, \kappa)$, each evaluated using the FPFS slot allocator on the full traffic set and scored by its incremental improvement $\Delta J = J(s) - J(s \cup \rho)$.

To achieve this, we rely heavily on the heuristics derived in Section \ref{subsubsec:nomrel_inload} to identify promising flows. For brevity, we consider only proposals involving a single flow (referred to as a \emph{singleton proposal}), although the extension to multi-flow proposals should be straightforward.

\paragraph{Inputs and Hyperparameters}

\begin{itemize}
  \item Flow set $\mathcal{P}_h = \{\CC_1, \ldots, \CC_R\}$: the set of candidate flows extracted for hotspot $h$.
  \item max\_flows\_in\_regulation $(M_{\max})$:  maximum number of flows to be evaluated by the FCFS slot allocator.
  \item top-$k$ proposals $(k_{\mathrm{top}})$: number of best-performing regulation proposals returned to the MCTS.
  \item min\_flights\_per\_flow $(N_{\min})$: minimum number of flights a flow must contain to be considered eligible.
  \item Grid of rate factors $\Gamma$: discrete set of candidate rate multipliers used for grid search, e.g.\ $\Gamma = \{0.6, 0.7, 0.8, 0.9 \dots 1.2\}$.
\end{itemize}

\paragraph{Rate initialization}
Let $C_{\min}(h)\equiv \min_{t\in I_h^{\mathrm{eff}}} C_{v_h,t}$ be a conservative per-hour capacity bound over the effective window. For a set of flows $S\subseteq \mathcal{P}_h$, define its (weighted) share of the total demand in $I_h^{\mathrm{eff}}$ as
\[
  p(S\,|\,s)\;\equiv\;
  \frac{\sum_{t\in I_h^{\mathrm{eff}}} \omega_t\, D_{v_h,t}(S)}
  {\sum_{t\in I_h^{\mathrm{eff}}} \omega_t\, D_{v_h,t}^{(s)}},
  \qquad
  \omega_t\;\equiv\;\bigl(D_{v_h,t}^{(s)}-C_{v_h,t}\bigr)_+ + \varepsilon,
\]
where $\varepsilon>0$ avoids division by zero. The weights $\omega_t$ emphasize time bins with positive overload.

We initialize the nominal rate for $S$ as the conservative capacity multiplied by this demand share:
\[
  \tau_0(S)\;\equiv\;\mathrm{min}\!\bigg(C_{\min}(h)\, p(S\,|\,s), D_{v_h} (S) \bigg),
\]
where $D_{v_h} (S)$ is the flow's demand contribution normalized to the hourly time scale. The set of candidate rates explored during grid search is given by multiplicative perturbations around $\tau_0(S)$:
\[
  \mathcal{T}(S)\;=\;
  \bigl\{\, \tau_0(S)\cdot(1+\gamma)\;:\;\gamma\in \Gamma\,\bigr\}.
\]

The search explores unions of the top-$r$ flows (by $\phi$) for $r=1,\ldots,R_{\max}$ and, for each union, evaluates candidate rates in $\mathcal{T}(S)$. Each candidate regulation is simulated via FPFS on the full traffic set and scored by $\Delta J$.

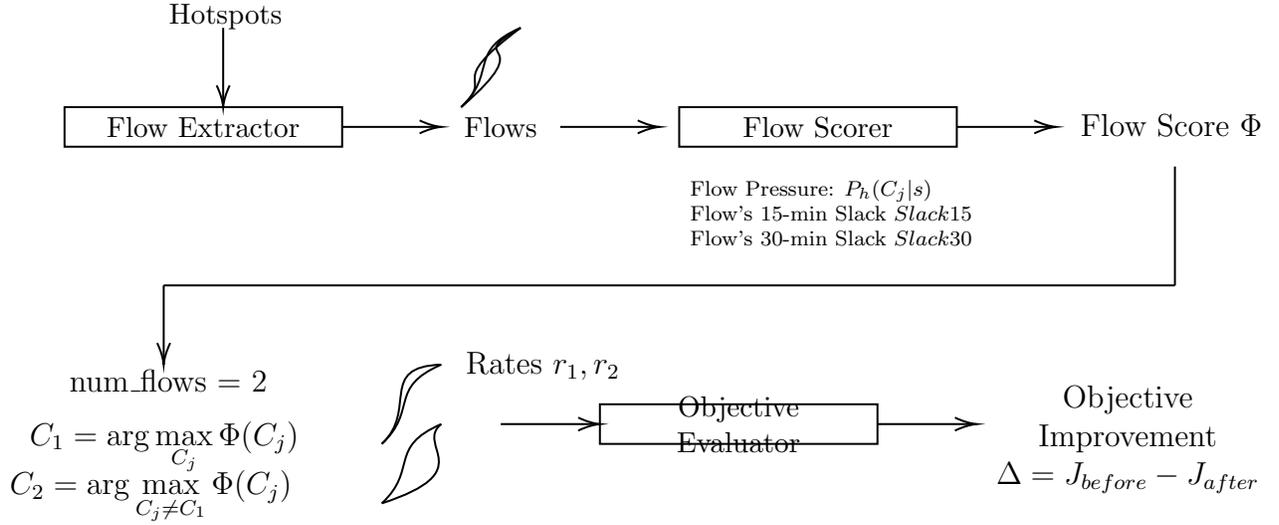
\begin{figure}
  \centering
  \begin{tikzpicture}[x=0.75pt,y=0.75pt,yscale=-1,xscale=1]
	
	\draw   (40,110) -- (180,110) -- (180,130) -- (40,130) -- cycle ;
	\draw    (180,120) -- (228,120) ;
	\draw [shift={(230,120)}, rotate = 180] [color={rgb, 255:red, 0; green, 0; blue, 0 }  ][line width=0.75]    (10.93,-3.29) .. controls (6.95,-1.4) and (3.31,-0.3) .. (0,0) .. controls (3.31,0.3) and (6.95,1.4) .. (10.93,3.29)   ;
	\draw    (120,70) -- (120,108) ;
	\draw [shift={(120,110)}, rotate = 270] [color={rgb, 255:red, 0; green, 0; blue, 0 }  ][line width=0.75]    (10.93,-3.29) .. controls (6.95,-1.4) and (3.31,-0.3) .. (0,0) .. controls (3.31,0.3) and (6.95,1.4) .. (10.93,3.29)   ;
	\draw   (350,110) -- (490,110) -- (490,130) -- (350,130) -- cycle ;
	\draw    (290,120) -- (338,120) ;
	\draw [shift={(340,120)}, rotate = 180] [color={rgb, 255:red, 0; green, 0; blue, 0 }  ][line width=0.75]    (10.93,-3.29) .. controls (6.95,-1.4) and (3.31,-0.3) .. (0,0) .. controls (3.31,0.3) and (6.95,1.4) .. (10.93,3.29)   ;
	\draw    (490,120) -- (538,120) ;
	\draw [shift={(540,120)}, rotate = 180] [color={rgb, 255:red, 0; green, 0; blue, 0 }  ][line width=0.75]    (10.93,-3.29) .. controls (6.95,-1.4) and (3.31,-0.3) .. (0,0) .. controls (3.31,0.3) and (6.95,1.4) .. (10.93,3.29)   ;
	\draw    (260,270) -- (308,270) ;
	\draw [shift={(310,270)}, rotate = 180] [color={rgb, 255:red, 0; green, 0; blue, 0 }  ][line width=0.75]    (10.93,-3.29) .. controls (6.95,-1.4) and (3.31,-0.3) .. (0,0) .. controls (3.31,0.3) and (6.95,1.4) .. (10.93,3.29)   ;
	\draw   (310,260) -- (450,260) -- (450,280) -- (310,280) -- cycle ;
	\draw    (450,270) -- (498,270) ;
	\draw [shift={(500,270)}, rotate = 180] [color={rgb, 255:red, 0; green, 0; blue, 0 }  ][line width=0.75]    (10.93,-3.29) .. controls (6.95,-1.4) and (3.31,-0.3) .. (0,0) .. controls (3.31,0.3) and (6.95,1.4) .. (10.93,3.29)   ;
	\draw    (600,140) -- (600,200) ;
	\draw    (90,200) -- (600,200) ;
	\draw    (90,200) -- (90,238) ;
	\draw [shift={(90,240)}, rotate = 270] [color={rgb, 255:red, 0; green, 0; blue, 0 }  ][line width=0.75]    (10.93,-3.29) .. controls (6.95,-1.4) and (3.31,-0.3) .. (0,0) .. controls (3.31,0.3) and (6.95,1.4) .. (10.93,3.29)   ;
	\draw    (240,110) .. controls (258.33,93) and (230,100) .. (270,70) ;
	\draw    (240,110) .. controls (258.33,93) and (243,81.67) .. (270,70) ;
	\draw    (240,110) .. controls (281.67,74.33) and (239,87.67) .. (270,70) ;
	\draw    (200,280) .. controls (218.33,263) and (203,251.67) .. (230,240) ;
	\draw    (200,310) .. controls (250.33,293.67) and (216.33,281) .. (230,270) ;
	\draw    (200,280) .. controls (218.33,263) and (197.67,241) .. (230,240) ;
	\draw    (200,310) .. controls (218.33,293) and (203,281.67) .. (230,270) ;
	
	\draw (110,120) node  [font=\small] [align=left] {\begin{minipage}[lt]{95.2pt}\setlength\topsep{0pt}
			\begin{center}
				Flow Extractor
			\end{center}
			
	\end{minipage}};
	\draw (260,120) node  [font=\small] [align=left] {\begin{minipage}[lt]{40.8pt}\setlength\topsep{0pt}
			\begin{center}
				Flows
			\end{center}
			
	\end{minipage}};
	\draw (120,60) node  [font=\small] [align=left] {\begin{minipage}[lt]{40.8pt}\setlength\topsep{0pt}
			\begin{center}
				Hotspots
			\end{center}
			
	\end{minipage}};
	\draw (420,120) node  [font=\small] [align=left] {\begin{minipage}[lt]{95.2pt}\setlength\topsep{0pt}
			\begin{center}
				Flow Scorer
			\end{center}
			
	\end{minipage}};
	\draw (354,145) node [anchor=north west][inner sep=0.75pt]  [font=\scriptsize] [align=left] {Flow Pressure: $\displaystyle P_{h}( C_{j} |s)$\\Flow's 15-min Slack $\displaystyle Slack15$\\Flow's 30-min Slack $\displaystyle Slack30$};
	\draw (551,112) node [anchor=north west][inner sep=0.75pt]   [align=left] {Flow Score $\displaystyle \Phi $};
	\draw (41,241) node [anchor=north west][inner sep=0.75pt]   [align=left] {num\_flows = 2};
	\draw (21,267) node [anchor=north west][inner sep=0.75pt]   [align=left] {$\displaystyle C_{1} =\arg\max_{C_{j}} \Phi ( C_{j})$};
	\draw (11,292) node [anchor=north west][inner sep=0.75pt]   [align=left] {$\displaystyle C_{2} =\arg\max_{C_{j} \neq C_{1}} \Phi ( C_{j})$};
	\draw (380,270) node  [font=\small] [align=left] {\begin{minipage}[lt]{95.2pt}\setlength\topsep{0pt}
			\begin{center}
				Objective Evaluator
			\end{center}
			
	\end{minipage}};
	\draw (503,249) node [anchor=north west][inner sep=0.75pt]   [align=left] {\begin{minipage}[lt]{107.6pt}\setlength\topsep{0pt}
			\begin{center}
				Objective Improvement\\$\displaystyle \Delta =J_{before} -J_{after}$
			\end{center}
			
	\end{minipage}};
	\draw (241,232) node [anchor=north west][inner sep=0.75pt]   [align=left] {Rates $\displaystyle r_{1} ,r_{2}$};

\end{tikzpicture}
  \caption{The Regulation Proposal Engine's Main Components, with \texttt{num\_flows = 2}.}
  \label{fig:regen}
\end{figure}

Figure \ref{fig:regen} provides a high-level view of the entire data-flows within the Regulation Proposal engine employed by Regulation Zero.
%


\subsection{Hierarchical Monte-Carlo Tree Search}
\label{subsec:regzero-mcts}
\subsubsection{Flow Regulations as a Markov Decision Process Problem}
As discussed earlier, it is essential to solve the traffic regulations in sequence. This motivates us to formulate the regulation search as planning in a finite-horizon Markov decision process (MDP)
\(
  \mathcal{M} = (\mathcal{S}, \mathcal{A}, P, R, \gamma)
\),
with state space \(\mathcal{S}\), \emph{joint} action space
\(\mathcal{A} = \mathcal{R}\times\mathcal{H}\) where \(a=(r,h)\) combines a regulation proposal \(r\in\mathcal{R}\) and a hotspot \(h\in\mathcal{H}\),
transition kernel \(P(\cdot \mid s,a)\),
reward \(R(s,a) = \Delta J(s,a)\) given by the objective improvement
(weighted reduction of excess counts and delay),
and a discount factor \(\gamma \in (0,1]\) for numerical stability.
For an episode \(s_0,s_1,\dots,s_T\), the objective is to maximize the expected discounted return
\begin{equation}
  J^\pi(s_0) \;=\; \mathbb{E}_\pi\!\left[ \sum_{t=0}^{T-1} \gamma^t \,\Delta J_t \right],
  \quad \text{with } s_{t+1}\sim P(\cdot \mid s_t,a_t), \; a_t\sim \pi(\cdot\mid s_t).
  \label{eq:objective}
\end{equation}

Because applying regulations results in a deterministic outcome:
\begin{equation*}
  P(s_{t+1} | s_t, a_t) = \delta_{\{s = s_{t+1}\}}.
\end{equation*}

\begin{figure}
  \centering
  \input{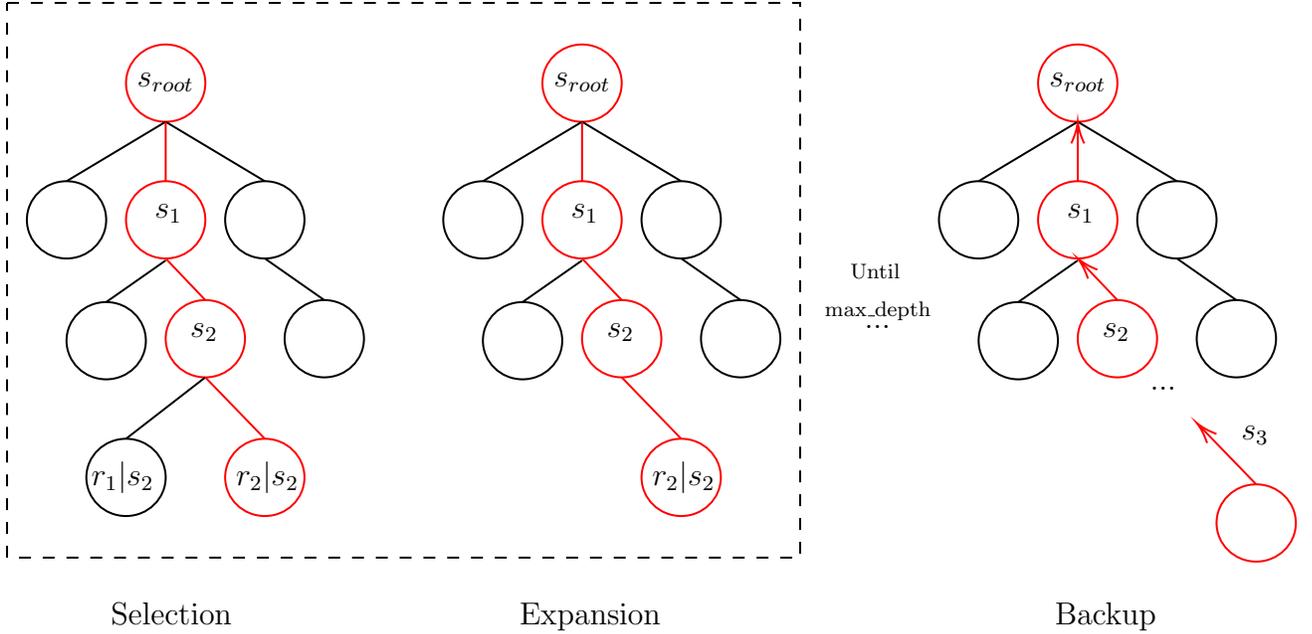}
  \caption{Illustration of the three main phases of the MCTS algorithm employed by Regulation Zero.}
  \label{fig:mcts}
\end{figure}

The key steps of our MCTS implementation version can be described in Figure \ref{fig:mcts}.

\subsubsection{Hierarchical action and priors.}
Because the flow extraction result $\mathcal{P}_h$ is tied to the selected hotspot segment $h$, we factor the search prior policy as
\[
  \pi(a\mid s) \;=\; \pi_H(h\mid s)\,\pi_R(r\mid h,s),
\]
and assume that the priors $P_H(h\mid s)>0$ and $P_R(r\mid h,s)>0$.

Let $\mathcal{H}(s)$ be the set of hotspot segments. We define the severity of a hotspot as the cumulative exceedance across its segment under plan $s$:
\begin{equation}\label{eq:phi_h_def}
  \phi(h \mid s)
  \;\equiv\;
  \sum_{t \in [t_{i,h},\,t_{f,h}]_{\mathbb{Z}}}
  \Bigl( D_{v_h,t}^{(s)} - C_{v_h,t} \Bigr)_{+},
\end{equation}
where $(x)_{+} \equiv \max\{x,0\}$.

We sample hotspots according to a tempered softmax:
\begin{equation}\label{eq:PH}
  P_H(h \mid s)
  \;=\;
  \frac{\exp\!\bigl(\phi(h\mid s)/\tau_H\bigr)}
  {\displaystyle \sum_{h'\in \mathcal{H}(s)} \exp\!\bigl(\phi(h'\mid s)/\tau_H\bigr)}
\end{equation}

Conditioned on $h$, the Regulation Proposal Engine returns a finite candidate set
$\mathcal{R}(h,s)=\{r_1,\ldots,r_m\}$ together with predicted immediate improvements $\widehat{\Delta J}(s,h,r)$. Then, the regulation to be selected will follow the PUCT rule.

\subsubsection{PUCT-driven proposal selection}
Within the search tree, we select proposals using a PUCT criterion instantiated for the \((h,r)\) branching factorization:
\begin{equation}
  r^* \;=\; \arg\max_{r}
  \left[
    Q(s,h,r) \;+\; c_{\mathrm{PUCT}}\, P_R(r\mid h,s)\,
    \frac{\sqrt{\sum_b n(s,h,b)}}{1+n(s,h,r)}
  \right],
  \label{eq:puct_selection}
\end{equation}
where \(Q(s,h,r)\) is the current value estimate and \(n(s,h,r)\) counts visits to child \((s,h,r)\).

The prior \(P_R(r\mid h,s)\) guides the search process to explore new regulations in the beginning of the search phase, but gradually subsides, rendering the policy becoming more greedy as total visits $\sum_b n(s,h,b)$ increase.

After a regulation is selected and applied, the corresponding delays are imposed on the affected flights, and the flight list is updated accordingly. This update triggers a re-evaluation of active hotspots and a new flow extraction step. The reward obtained at iteration $k$ is defined as $\Delta J_k = \hat{\Delta J}(s, h, r^*)$. The algorithm then continues to extend the sequence until reaching a predefined search depth.

\subsubsection{Value backups}
At the end of the trajectory, the cumulative discounted reward is:
\begin{equation}
  G \!=\! \sum_{k} \gamma^{k-1}\Delta J_k.
\end{equation}

For each visited node on the path, we perform the following updates:
\begin{align}
  n(s,h,r) &\leftarrow n(s,h,r) + 1,\\
  W(s,h,r) &\leftarrow W(s,h,r) + G,\\
  Q(s,h,r) &\leftarrow \frac{W(s,h,r)}{n(s,h,r)}.
\end{align}

All the steps of the MCTS can be summarized in Algorithm \ref{alg:regzero}.

\begin{algorithm}[t]
  \caption{Regulation Zero: Hierarchical, KL-Regularized MCTS}
  \label{alg:regzero}
  \begin{algorithmic}[1]
    \State \textbf{Input:} priors \(P_H, P_R\); severity \(\phi\); discount \(\gamma\); \(c_{\mathrm{PUCT}}\); \texttt{max\_depth}
    \For{simulation $=1,\dots,N$}
    \State $s \leftarrow s_0$; path $\leftarrow [\;]$; $d \leftarrow 0$
    \While{$d < \texttt{max\_depth}$ and $s$ not terminal}
    \State \textbf{Hotspot:} sample $h \sim \pi_H(\cdot\mid s) \propto P_H(\cdot\mid s)\exp(\phi/\tau_H)$
    \If{some $(s,h, \cdot)$ unexpanded} \State retrieve/update $\{r(h)\}$ via \texttt{RegulationProposal} \EndIf
    \State \textbf{Proposal:} select $r$ by \eqref{eq:puct_selection}
    \State Append $(s,h,r)$ to path
    \State Execute $a=(h,r)$; observe $\Delta J$; sample $s' \sim P(\cdot\mid s,a)$
    \State $s \leftarrow s'$; $d \leftarrow d+1$
    \EndWhile
    \State Compute return $G \leftarrow \sum_{t=0}^{d-1} \gamma^t \Delta J_t$
    \For{each node $(s,h,r)$ in path} \State update $n,W,Q$ with $G$ \EndFor
    \EndFor
    \State \textbf{return} action at root with largest value
  \end{algorithmic}
\end{algorithm}

\subsection{Comments on Architecture Design}
In this section, we provide a rationale for the design of our framework. Conventional approaches in reinforcement learning (RL) studies often formulate the action space around atomic operations, such as adding or removing a regulation. However, our experiments indicate that this design frequently leads to cyclic exploration patterns, as the vast combinatorial space of possible regulatory modifications causes the policy to become trapped in repetitive loops. While one could impose a simulation budget to curb unbounded exploration, the long trajectory lengths inherent to such settings substantially weaken the gradient signal during backpropagation, thereby impeding effective learning.

The role of the regulation proposal (RP) is fundamental in ensuring the stability of the MCTS search process. One valid view of RP is that RP is the local-search extension of MCTS's exploration at each node. Not only would this help narrowing down the search space, but also renders the solution transparent to verification.

Following the analysis presented in Appendix~\ref{sec:poliopt}, \emph{Regulation Zero} optimizes a KL-regularized objective that promotes structured exploration at both the hotspot and regulation levels:
\begin{equation}
  \Omega_s(\pi) = \tau_h \, \mathrm{KL}\!\left[\pi_H(\cdot\,|\,s) \,\|\, P_H(\cdot\,|\,s)\right]
  + \mathbb{E}_{h \sim \pi_H}\!\left[\lambda_R (s,h) \, \mathrm{KL}\!\left[P_R(\cdot\,|\,h,s) \,\|\, \pi_R(\cdot\,|\,h,s)\right]\right].
\end{equation}

The first term, $\tau_h \, \mathrm{KL}[\pi_H(\cdot|s) \| P_H(\cdot|s)]$, represents a reverse KL-divergence akin to that used in the Soft Actor-Critic (SAC) formulation \cite{haarnoja2018soft}, favoring mode-seeking behavior that concentrates attention on the most critical hotspots $h$.

In contrast, the second term, $\mathrm{KL}[P_R(\cdot|h,s) \| \pi_R(\cdot|h,s)]$, takes the form of a forward KL-divergence, encouraging mode-covering exploration within the regulation space.

Conceptually, this separation guides the policy to focus relief efforts on the most severe hotspots, while simultaneously enabling a comprehensive and diverse search across the regulatory landscape. The empirical results discussed in the following section illustrate how these dual objectives manifest in practice.

\section{Results}\label{sec:results}
To validate the proposed framework, we conducted a comprehensive evaluation using planned flight data obtained from EUROCONTROL’s DDR2. Instead of relying on R-NEST for CASA, we reimplemented the Slot Allocator within our open-source ATFM research platform, \emph{Flow’s Kitchen} \footnote{Source code is available at \url{https://github.com/thinhhoang95/project-cortex}.}. Performance was benchmarked against Simulated Annealing (SA) \cite{lavandier2021selective} and the Non-dominated Sorting Genetic Algorithm II (NSGA-II) \cite{fadil2017multi} to assess both solution quality and computational efficiency. We also compared against the previous version of Regulation Zero \cite{hoang2026flocon}, which used a different set of heuristics to guide flow selection; and a naive greedy capping policy, in which regulations are created to cap entries directly at the capacity limit, proceeding from hotspots with the highest exceedances to those with the lowest. In a strict sense, the greedy strategy could be seen as the first step of \cite{Dalmau2022Optimal}, from which unnecessary regulations could then be cancelled to further improve network delays. Table \ref{tab:optimizer_family_class} summarizes the family each comparison baseline belongs to. 

In both SA and GA implementations, taking inspiration from \cite{delahaye2018simulated}, we prioritize selecting flights with more ``interactions,'' which in our case means selecting with probability proportional to the number of ``hot cells'' they contribute to. This design introduces bias towards exceedance optimization, which is in a strict sense, what NomRel attempts to measure in RZ.

\begin{table}[htbp]
	\centering
	\begin{tabular}{ll}
		\toprule
		Optimizer & Family Class \\
		\midrule
		Regulation Zero (RZ) v2--this paper & \textbf{Flow-centric} \\
		Regulation Zero (RZ) v1 & \textbf{Flow-centric} \\
		Greedy & \textbf{Flow-centric} \\
		Simulated Annealing (SA) & Flight-centric \\
		NSGA-II (GA) & Flight-centric \\
		\bottomrule
	\end{tabular}
	\caption{Optimizer family classes.}
	\label{tab:optimizer_family_class}
\end{table}

All experiments were conducted on a dedicated cloud instance equipped with an AMD EPYC 7702P processor (64 cores, of which 16 were used) and 64GB of RAM. Implementations were developed in Python 3.13 with Numba. Objective values were computed using weights $w_{CAP} = 10$ and $w_{DELAY} = 1$, i.e., one excess entry is equivalent to 10 minutes of delay. Sector opening schemes and dynamic airspace reconfiguration were enabled during objective evaluation.

We selected seven full-day pan-European traffic scenarios from the SESAR DeepFlow validation dataset: 16/07, 17/07, 18/07, 20/07, 29/07, 03/08, and 05/08/2023. The number of flights per day ranged from 22,429 (29/07) to 24,833 (20/07). All algorithms managed to complete within 120 to 150 minutes, consistent with a pre-tactical optimization horizon.

\subsection{Hyperparameters Selection}
\begin{table}[htbp]
  \centering 
  \caption{Hyperparameters used for Regulation Zero}
  \label{tab:hyperparams}
  \begin{tabular}{ll}
    \toprule
    \textbf{Parameter} & \textbf{Value} \\
    \midrule
    sims & 128 \\
    depth & 64 \\
    commit\_depth & 64 \\
    flows\_threshold & 0.72 \\
    flows\_resolution & 1.0 \\
    max\_hotspots\_per\_node & 20 \\
    k\_proposals\_per\_hotspot & 6 \\
    puct\_c & 64 \\
    gamma & 0.999998 \\
    regulation\_selection\_softmax\_temperature & 24.0 \\
    hotspot\_sampling\_temperature & 6.0 \\
    {max\_delay\_per\_flight\_min} & $120$ \\
    \bottomrule
  \end{tabular}
\end{table}

\begin{table}[htbp]
  \centering

  \caption{Hyperparameters used for SA.}
  \label{tab:hyperparameterssa}
  \begin{tabularx}{0.5\linewidth}{X c}
    \toprule
    \textbf{Parameter Name} & \textbf{Value} \\
    \midrule
    {iters} & $10,000$ \\
    {T0} (Initial Temperature) & $15.0$ \\
    {cooling} (Cooling Rate) & $0.999$ \\
    {T\_min} (Minimum Temperature) & $1 \times 10^{-9}$ \\
    {max\_delay\_per\_flight\_min} & $120$ \\
    \bottomrule
  \end{tabularx}
\end{table}

\begin{table}[htbp]
	\centering
	\caption{Hyperparameters used by NSGA-II}
	\label{tab:hyperparametersga}
	\begin{tabularx}{0.7\linewidth}{X c}
		\toprule
		\textbf{Parameter Name} & \textbf{Value} \\
		\midrule
		\texttt{population\_size} & $64$ \\
		\texttt{generations} & $80$ \\
		\texttt{p\_crossover} & $0.9$ \\
		\texttt{mutations\_per\_child} & $2$ \\
		\texttt{mutate\_existing\_prob} & $0.7$ \\
		\texttt{max\_delay\_per\_flight\_min} & $120$ \\
		\texttt{(delay) step\_choices} & $\{2, 3, 4, 5\}$ \\
		\texttt{allow\_negative\_moves} & \texttt{True} \\
		\texttt{init\_delayed\_flights\_min} & $1$ \\
		\texttt{init\_delayed\_flights\_max} & $8$ \\
		\bottomrule
	\end{tabularx}
\end{table}

The hyperparameters of Regulation Zero were set according to Table \ref{tab:hyperparams}, SA's are in Table \ref{tab:hyperparameterssa} and NSGA-II's are in Table \ref{tab:hyperparametersga}. These hyperparameters were refined from the automatically tuned values retrieved from Optuna with 15 run trials to give the best results for both algorithms.

\subsection{Key Performance Indices Comparison}
\subsubsection{RZ delivers strong overall performance}
\begin{table}[!htbp]
	\centering
	\begin{tabular}{llrrrr}
		\toprule
		Case & Algorithm & Obj.\ Improvement & Exceedance Reduced & Total Delay (min) & Flights Delayed \\
		\midrule
		\multirow{5}{*}{0308}
		& RZ v1    & \underline{5790.8}    & \underline{1421.0}   & 8419.0            & \underline{229} \\
		& RZ v2    & \textbf{5920.8}       & \textbf{1481.0}      & 8889.0            & 258 \\
		& SA       & 3808.0                & 564.0                & \underline{1832.0} & 445 \\
		& GA       & 699.0                 & 113.0                & \textbf{431.0}    & \textbf{109} \\
		& Greedy & -142734.0             & -7844.0              & 64294.0           & 1226 \\
		\midrule
		\multirow{5}{*}{0508}
		& RZ v1    & \underline{6060.8}    & \textbf{1626.0}      & 10199.0           & 237 \\
		& RZ v2    & \textbf{6212.8}       & \underline{1478.0}   & 8567.0            & \underline{232} \\
		& SA       & 3125.0                & 465.0                & \underline{1525.0} & 377 \\
		& GA       & 668.0                 & 102.0                & \textbf{352.0}    & \textbf{92} \\
		& Greedy & -1270368.0            & -43552.0             & 837084.0          & 2904 \\
		\midrule
		\multirow{5}{*}{1607}
		& RZ v1    & \underline{3801.9} & \underline{1137.0} & 7568.0             & \underline{224} \\
		& RZ v2    & \textbf{5056.8}    & \textbf{1390.0}    & 8843.0             & 273 \\
		& SA       & 3006.0             & 448.0              & \underline{1474.0} & 370 \\
		& GA       & 602.0              & 91.0               & \textbf{308.0}     & \textbf{80} \\
		& Mindless & -58402.0           & -1197.0            & 47071.0            & 1070 \\
		\midrule
		\multirow{5}{*}{1707}
		& RZ v1    & \underline{4437.8}    & \underline{1272.0}   & 8282.0            & \underline{251} \\
		& RZ v2    & \textbf{5022.8}       & \textbf{1311.0}      & 8087.0            & 261 \\
		& SA       & 3333.0                & 482.0                & \underline{1487.0} & 380 \\
		& GA       & 648.0                 & 90.0                 & \textbf{252.0}    & \textbf{54} \\
		& Greedy & -283689.0             & -4406.0              & 239629.0          & 2038 \\
		\midrule
		\multirow{5}{*}{1807}
		& RZ v1    & \underline{4131.9}    & \underline{1327.0}   & 9138.0            & 264 \\
		& RZ v2    & \textbf{4964.8}       & \textbf{1360.0}      & 8635.0            & \underline{246} \\
		& SA       & 3259.0                & 478.0                & \underline{1521.0} & 366 \\
		& GA       & 757.0                 & 115.0                & \textbf{393.0}    & \textbf{101} \\
		& Greedy & -112406.0             & -5552.0              & 56886.0           & 1204 \\
		\midrule
		\multirow{5}{*}{2007}
		& RZ v1    & \underline{3881.9}    & \underline{1097.0}   & 7088.0            & \underline{207} \\
		& RZ v2    & \textbf{4645.8}       & \textbf{1386.0}      & 9214.0            & 243 \\
		& SA       & 3189.0                & 465.0                & \underline{1461.0} & 377 \\
		& GA       & 719.0                 & 114.0                & \textbf{421.0}    & \textbf{100} \\
		& Greedy & -986501.0             & -27137.0             & 715131.0          & 2410 \\
		\midrule
		\multirow{5}{*}{2907}
		& RZ v1    & \underline{5954.8}    & \underline{1414.0}   & 8185.0            & \underline{229} \\
		& RZ v2    & \textbf{6385.8}       & \textbf{1754.0}      & 11154.0           & 307 \\
		& SA       & 4530.0                & 645.0                & \underline{1920.0} & 459 \\
		& GA       & 902.0                 & 129.0                & \textbf{388.0}    & \textbf{84} \\
		& Greedy & -800930.0             & -23012.0             & 570810.0          & 2236 \\
		\bottomrule
	\end{tabular}
	\caption{Performance comparison across cases and algorithms. Bold indicates the best result within each case and metric; underlining indicates the second-best result. This paper's algorithm is presented as RZ v2.}
	\label{tab:case_algorithm_results}
\end{table}

\begin{table}[!htbp]
	\centering
	\begin{tabular}{llrrrrr}
		\toprule
		Case & Algorithm & \makecell{Changed\\(TV,TB) Cells} & \makecell{Changed\\TVs} & \makecell{Over-cap\\Reductions} & \makecell{Under-cap\\Increases} & \makecell{Beneficial\\Pairs} \\
		
		\midrule
		\multirow{5}{*}{0308}
		& RZ v1    & 7778  & 732 & 1135 & 2135 & 3270 \\
		& RZ v2    & 8649  & 733 & 1329 & 2240 & 3569 \\
		& SA       & 5820  & 726 & 488  & 819  & 1307 \\
		& GA       & 1988  & 487 & 175  & 254  & 429  \\
		& Greedy & 12310 & 858 & 1048 & 2939 & 3987 \\
		\midrule
		\multirow{5}{*}{0508}
		& RZ v1    & 8183  & 720 & 1482 & 2365 & 3847 \\
		& RZ v2    & 8463  & 716 & 1309 & 2272 & 3581 \\
		& SA       & 4657  & 674 & 417  & 728  & 1145 \\
		& GA       & 1405  & 433 & 147  & 189  & 336  \\
		& Greedy & 33646 & 893 & 4061 & 6816 & 10877 \\
		\midrule
		\multirow{5}{*}{1607}
		& RZ v1    & 8127  & 774 & 1103 & 1900 & 3003 \\
		& RZ v2    & 8784  & 788 & 1325 & 2082 & 3407 \\
		& SA       & 4909  & 696 & 394  & 651  & 1045 \\
		& GA       & 1349  & 385 & 124  & 162  & 286  \\
		& Greedy & 15432 & 905 & 1380 & 3665 & 5045 \\
		\midrule
		\multirow{5}{*}{1707}
		& RZ v1    & 8766  & 759 & 1332 & 2226 & 3558 \\
		& RZ v2    & 8187  & 788 & 1167 & 2012 & 3179 \\
		& SA       & 4768  & 699 & 371  & 667  & 1038 \\
		& GA       & 906   & 314 & 106  & 148  & 254  \\
		& Greedy & 23651 & 932 & 2421 & 5624 & 8045 \\
		\midrule
		\multirow{5}{*}{1807}
		& RZ v1    & 9731  & 794 & 1236 & 2553 & 3789 \\
		& RZ v2    & 8359  & 747 & 1197 & 2136 & 3333 \\
		& SA       & 4594  & 702 & 389  & 664  & 1053 \\
		& GA       & 1784  & 452 & 147  & 210  & 357  \\
		& Greedy & 11760 & 917 & 657  & 2922 & 3579 \\
		\midrule
		\multirow{5}{*}{2007}
		& RZ v1    & 7311  & 746 & 1265 & 1857 & 3122 \\
		& RZ v2    & 8607  & 777 & 1392 & 2318 & 3710 \\
		& SA       & 4678  & 697 & 388  & 709  & 1097 \\
		& GA       & 1687  & 455 & 173  & 237  & 410  \\
		& Greedy & 28871 & 926 & 4242 & 4659 & 8901 \\
		\midrule
		\multirow{5}{*}{2907}
		& RZ v1    & 7514  & 707 & 1290 & 1934 & 3224 \\
		& RZ v2    & 10245 & 771 & 1489 & 2555 & 4044 \\
		& SA       & 5148  & 693 & 439  & 763  & 1202 \\
		& GA       & 1460  & 426 & 130  & 197  & 327  \\
		& Greedy & 26976 & 875 & 4014 & 4620 & 8634 \\
		\bottomrule
	\end{tabular}
	\caption{Comparison of changed cells, changed TVs, and capacity-related effects across cases and algorithms.}
	\label{tab:changed_pairs_results}
\end{table}

\begin{figure}
	\centering
	\includegraphics[width=\linewidth]{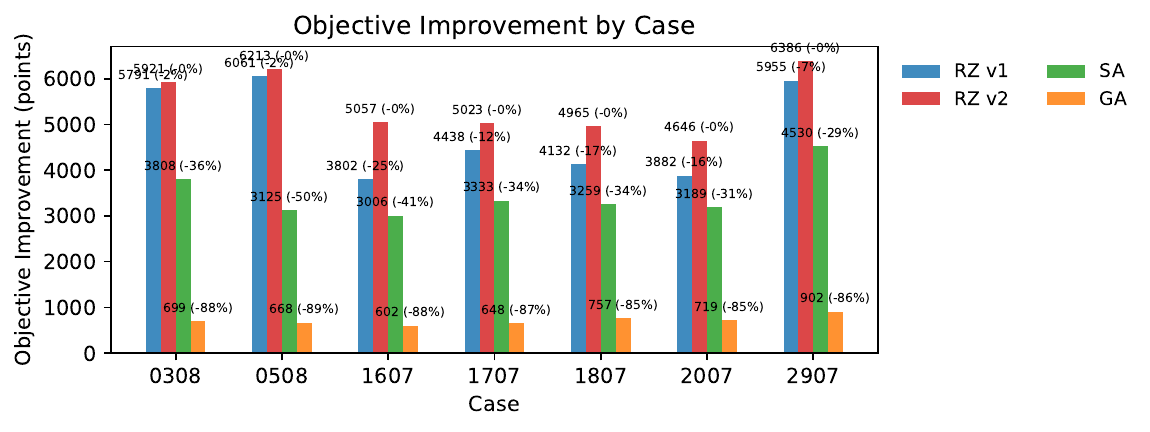}
	\caption{Objective Improvement by Case, by Algorithm. Higher is better. RZ shows state-of-the-art results.}
	\label{fig:objimprovementbycase}
\end{figure}

Table \ref{tab:case_algorithm_results} compares the key performance indices of our framework (RZ v2) against four baselines. As shown in Figure \ref{fig:objimprovementbycase}, Regulation Zero achieves state-of-the-art DCB performance, often reaching up to 30-40\% higher objective improvements than the next best optimizer family's performance, SA. The refined heuristic set in RZ v2 further contributes an additional $\sim$10\% performance gain over the RZ v1 \cite{hoang2026flocon}. At the same time, NSGA-II appeared to get stuck in a local minima early, while Greedy capping created a lot of secondary overloads---the classic symptom of Regulation Cascading.

We observe that the Greedy capping strategy generated secondary hotspots early in the process, indicating that it is already an ineffective strategy from the beginning. This observation aligns with EUROCONTROL Flow ConOps \cite{EurocontrolFlowConops2024}, which similarly cautions against blanket capping and instead advocates for more targeted regulation of individual or selected flows. The large negative outcomes arise from repeatedly applying this flawed strategy.

\subsubsection{RZ's Solutions Concentrate on Smaller Sets of Flight, Touch Fewer Traffic Volumes, and More Likely to Target High Leverage Points in the Network}

Because RZ optimizes over flows automatically extracted based on their impact, it also concentrates delay on a smaller subset of flights. In practice, RZ affects roughly 25\% fewer flights than SA (Table \ref{tab:case_algorithm_results}) and keeps the impact localized in a tighter geographical bound.

\begin{figure}
	\centering
	\includegraphics[width=\linewidth]{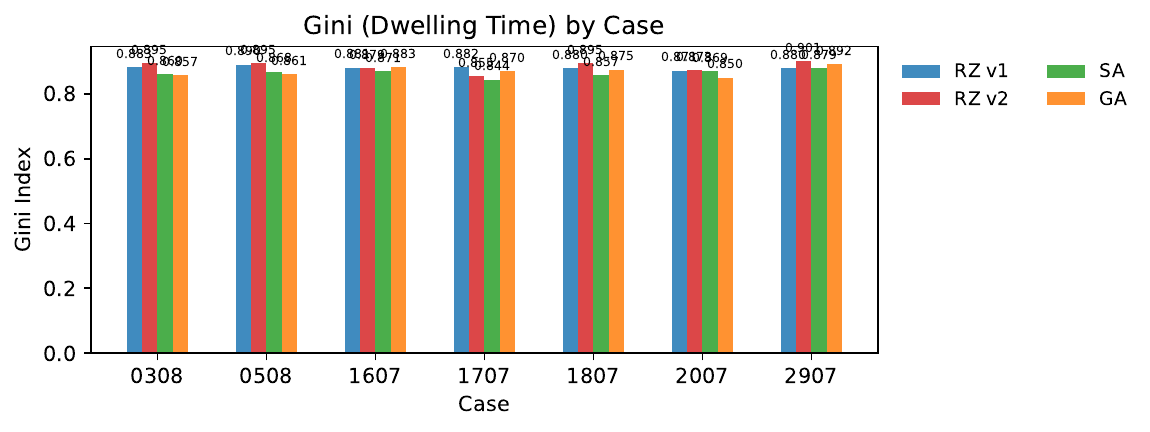}
	\caption{Traffic-Volume Gini indices measure the delay distribution fairness across Traffic Volumes.}
	\label{fig:ginidwellingbycase}
\end{figure}

Table \ref{tab:changed_pairs_results} corroborates this finding: the number of traffic volume-time pairs (i.e., ``cells'') experiencing demand changes differs by less than 10\% between SA and RZ, yet the resulting DCB performance (measured in points) can improve by up to 30\%. Figure \ref{fig:ginidwellingbycase} further shows that RZ exhibits higher traffic volume Gini indices, indicating that delays are distributed more across traffic volumes in a more localized pattern compared to SA and GA.

Table \ref{tab:changed_pairs_results} further indicates that RZ outperforms both SA and GA in terms of \emph{beneficial cells}, defined as traffic volume–time bin pairs that either reduce overload or absorb additional traffic in underloaded conditions. In particular, RZ yielded 0.41 beneficial cells per cell changed, compared to SA at around 0.23. This suggests that RZ not only limits the scope of its impact but also redistributes traffic more effectively from ``hot'' to ``cold'' regions of the network by concentrating its actions on \emph{beneficial cells} much more than SA and GA.

The ``tighter'' RZ delay assignment pattern, in our view is neither inherently good or bad; their value depends strongly on the operational context. If DCB can be managed more effectively through capacity-side measures (e.g., opening additional sectors or adjusting configurations), then concentrating heavier delays on a narrower portion of the network may be preferable. Conversely, if fairness across ACCs is a primary concern, a more diffuse delay distribution, such as that produced by SA, may be more appropriate.

\subsection{Optimization Progression Comparison}
\subsubsection{RZ is Able to Overcome Local Minimas Much More Effective}
\begin{figure}[htbp]
	\centering
	
	\begin{subfigure}[b]{0.48\textwidth}
		\centering
		\includegraphics[width=\textwidth]{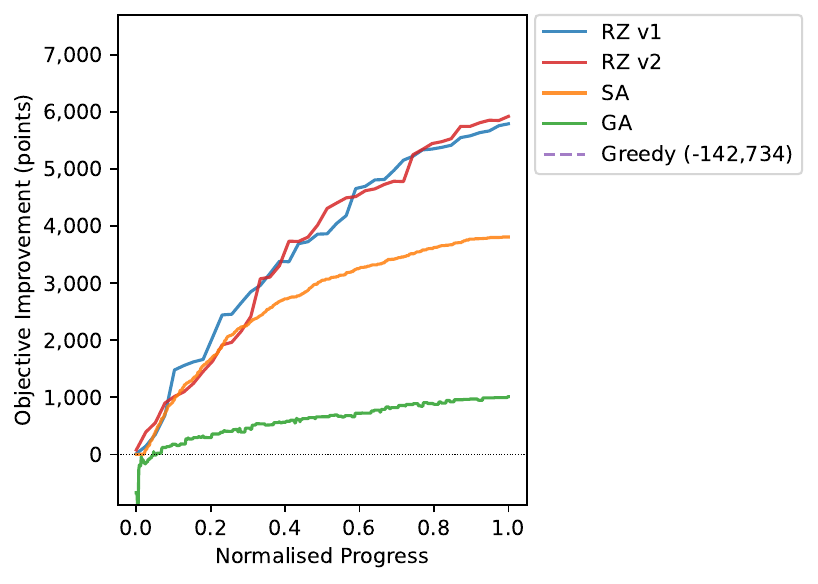}
		\caption{03/08}
	\end{subfigure}
	\hfill
	\begin{subfigure}[b]{0.48\textwidth}
		\centering
		\includegraphics[width=\textwidth]{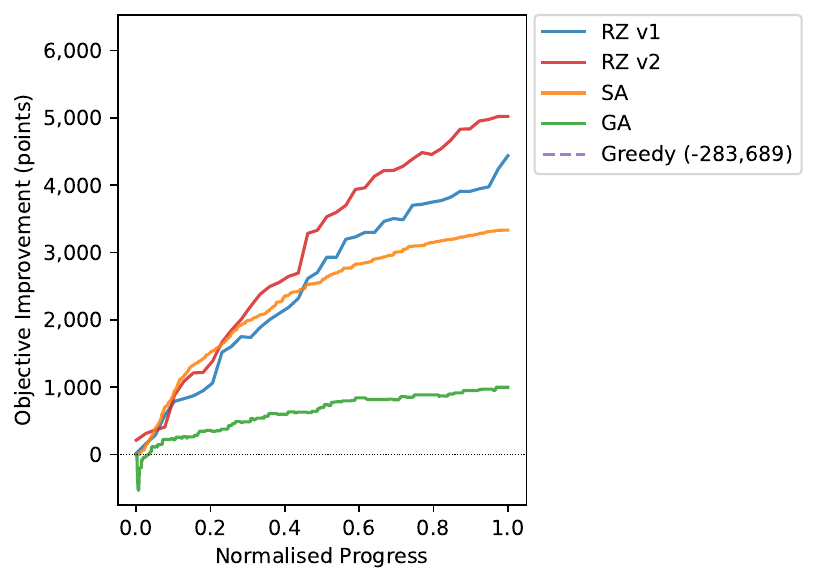}
		\caption{17/07}
	\end{subfigure}
	
	\vspace{0.5em}
	
	\begin{subfigure}[b]{0.48\textwidth}
		\centering
		\includegraphics[width=\textwidth]{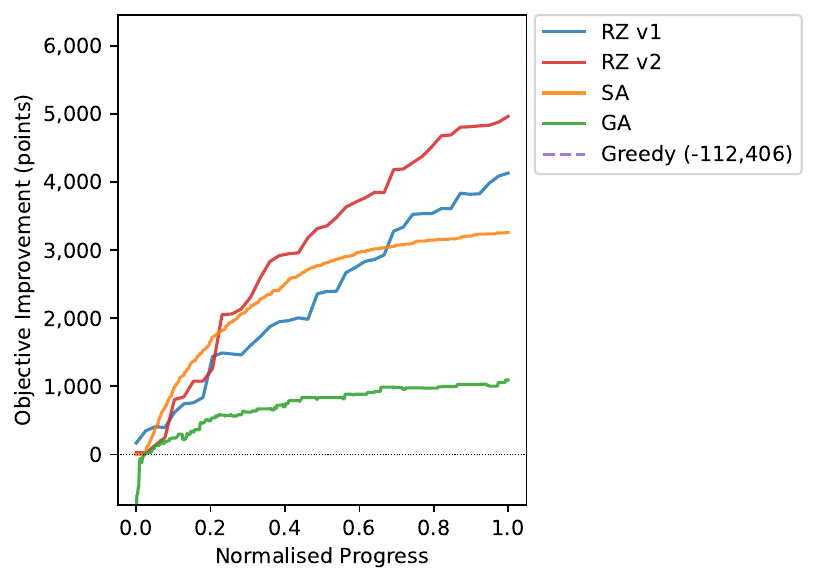}
		\caption{18/07}
	\end{subfigure}
	\hfill
	\begin{subfigure}[b]{0.48\textwidth}
		\centering
		\includegraphics[width=\textwidth]{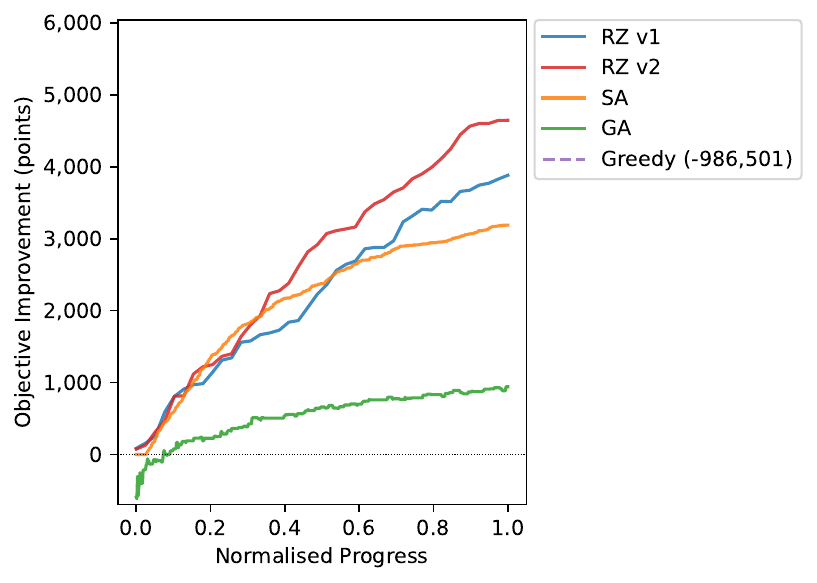}
		\caption{20/07}
	\end{subfigure}
	
	\caption{Comparison of objective improvement score against relative run time. Higher is better.}
	\label{fig:multialgos_obj_imp_2x2}
\end{figure}

Figure \ref{fig:multialgos_obj_imp_2x2} shows the progression of objective improvement as a function of normalized run time. Each algorithm was allotted the same maximum time budget of 2 hours and 30 minutes. Although differences in implementation and hyperparameter tuning make direct comparisons impractical, the results could be interpreted in terms of convergence behavior.

Overall, RZ remains competitive with SA even in the early stages of the optimization process and significantly outperforms GA. Both SA and GA tend to reach local minima early, plateauing at around 50\% of the allotted run time. In contrast, both RZ variants continue to improve throughout, suggesting that their performance advantage stems from their higher ability to escape local minima. This is highly plausible as in flow-centric methods, traffic is moved in blocks, thus reduces the number of decision variables and the effective dimensionality of the problem.

\subsubsection{RZ Excels at Exceedance Reduction; SA and GA Excel at Delay Optimization.}

\begin{figure}
	\centering
	\includegraphics[width=\linewidth]{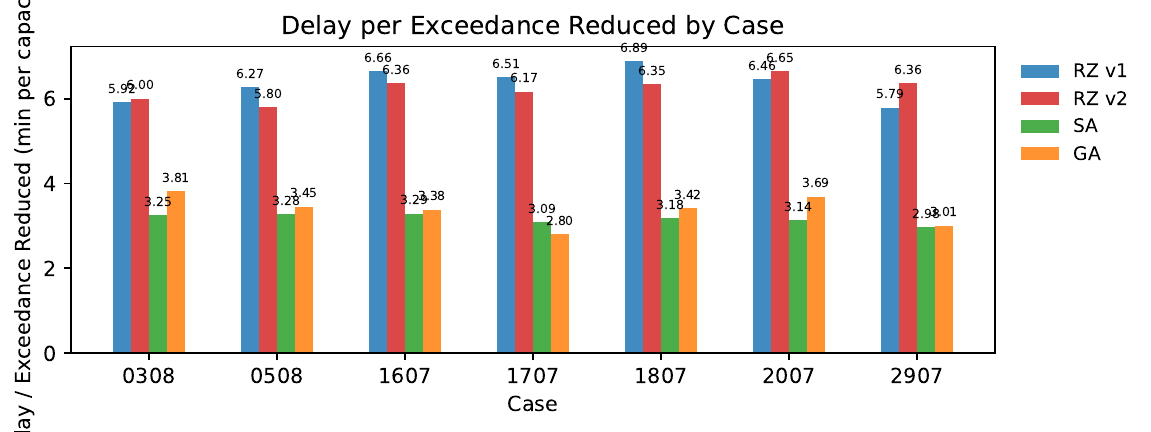}
	\caption{Average Delay imposed per Exceedance Reduced per Case, per Algorithm. Lower is Better.}
	\label{fig:delayperexceedancereducedbycase}
\end{figure}

An important question concerns the source of improvement: do the gains come primarily from reducing overload or from optimizing delays? Figure \ref{fig:delayperexceedancereducedbycase} suggests that flow-centric methods are less delay-efficient. The RZ family typically requires roughly twice as much delay to relieve one unit of exceedance, whereas SA and GA are comparable at around 3 minutes per exceedance capacity-hour.

In our view, the most plausible explanation is that the objective function (\ref{eq:j_eq}) lumps both the exceedance term $J_{CAP}$ and the delay penalty $J_{DELAY}$ together. Gradient-descent-like methods such as SA and GA receive far more frequent feedback from $J_{DELAY}$ than from $J_{CAP}$, and therefore tend to prioritize delay reduction over exceedance mitigation.

This line of reason is further supported by Figure \ref{fig:delayperexceedancereducedbycase}, where NSGA-II exhibits slightly higher delay per unit of exceedance reduced. As a population-based method with crossover, it deviates further from gradient-like behavior than SA, thus less favoring delay optimization. More direct evidence is provided in Figure \ref{fig:multialgos_exceedance_leverage_derivative_2x2}, which measures the contribution of $J_{CAP}$ to the overall objective at each step.

The results show that exceedance reduction for SA and GA is heavily concentrated at the start of the optimization, plateauing as early as 25\% into the run. In contrast, RZ delivers consistent exceedance reduction throughout the whole optimization process. This suggests that flight-centric methods allocate not just most of their effort, but also their time to optimizing delays rather than resolving overloads.

\begin{figure}[htbp]
	\centering
	
	\begin{subfigure}[b]{0.48\textwidth}
		\centering
		\includegraphics[width=\textwidth]{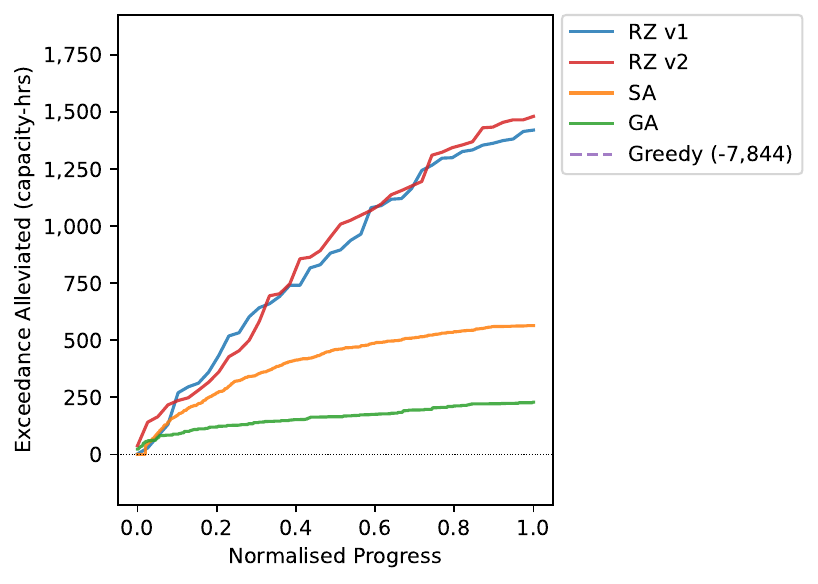}
		\caption{03/08}
	\end{subfigure}
	\hfill
	\begin{subfigure}[b]{0.48\textwidth}
		\centering
		\includegraphics[width=\textwidth]{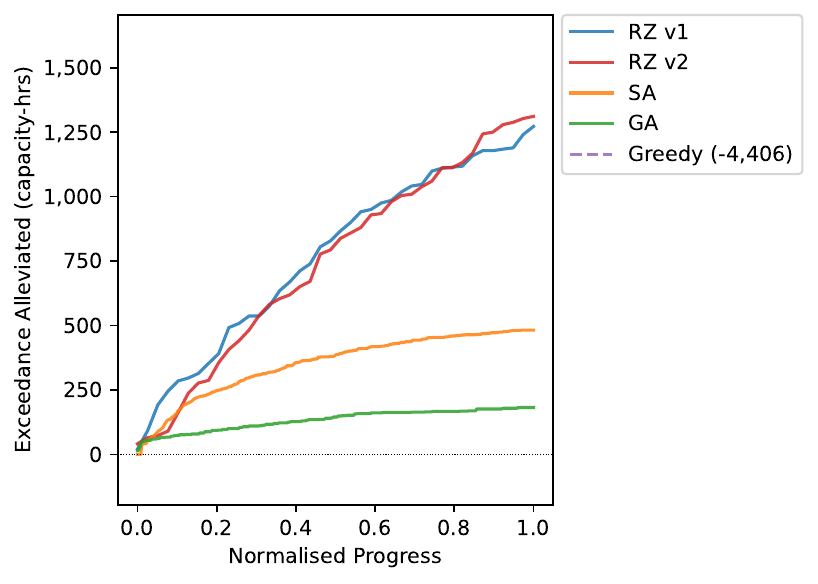}
		\caption{17/07}
	\end{subfigure}
	
	\vspace{0.5em}
	
	\begin{subfigure}[b]{0.48\textwidth}
		\centering
		\includegraphics[width=\textwidth]{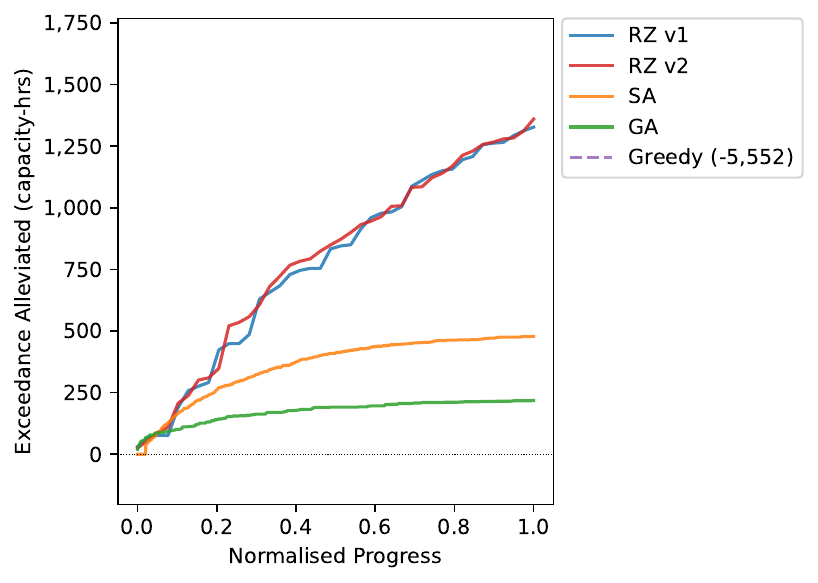}
		\caption{18/07}
	\end{subfigure}
	\hfill
	\begin{subfigure}[b]{0.48\textwidth}
		\centering
		\includegraphics[width=\textwidth]{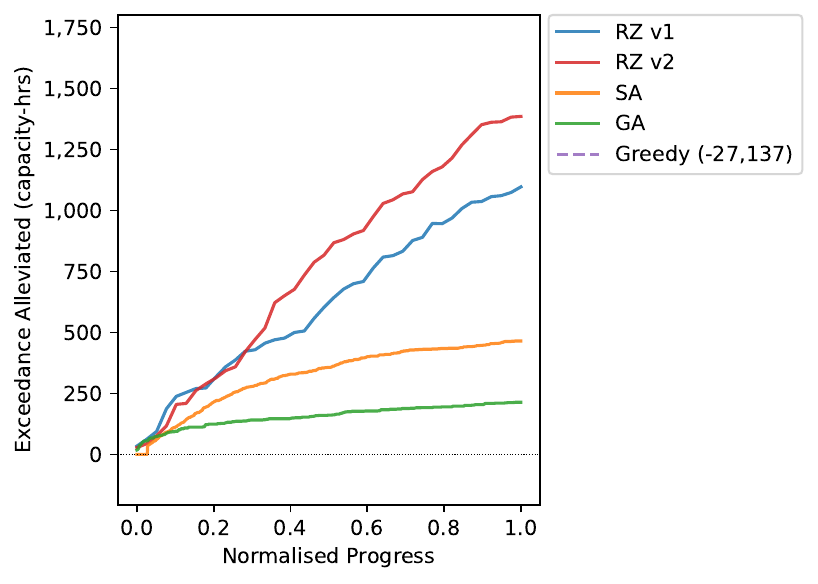}
		\caption{20/07}
	\end{subfigure}
	
	\caption{Comparison of $J_{CAP}$ score improvement against relative run time. Higher is better.}
	\label{fig:multialgos_exc_red_2x2}
\end{figure}

\begin{figure}[htbp]
	\centering
	
	\begin{subfigure}[b]{0.48\textwidth}
		\centering
		\includegraphics[width=\textwidth]{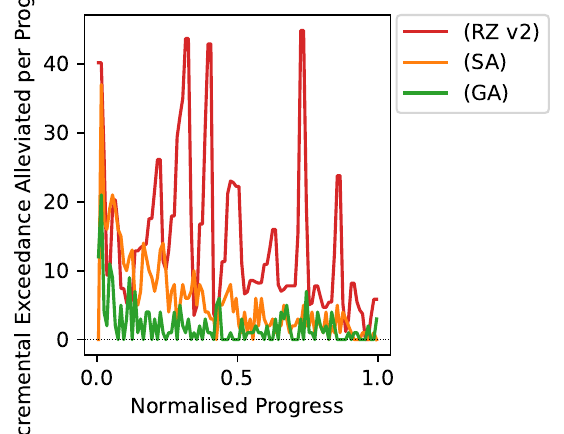}
		\caption{03/08}
	\end{subfigure}
	\hfill
	\begin{subfigure}[b]{0.48\textwidth}
		\centering
		\includegraphics[width=\textwidth]{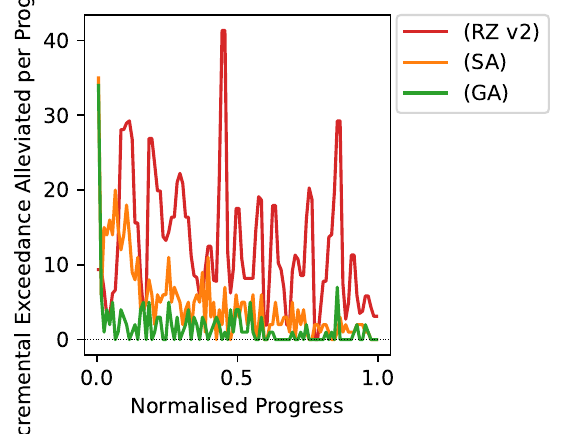}
		\caption{17/07}
	\end{subfigure}
	
	\vspace{0.5em}
	
	\begin{subfigure}[b]{0.48\textwidth}
		\centering
		\includegraphics[width=\textwidth]{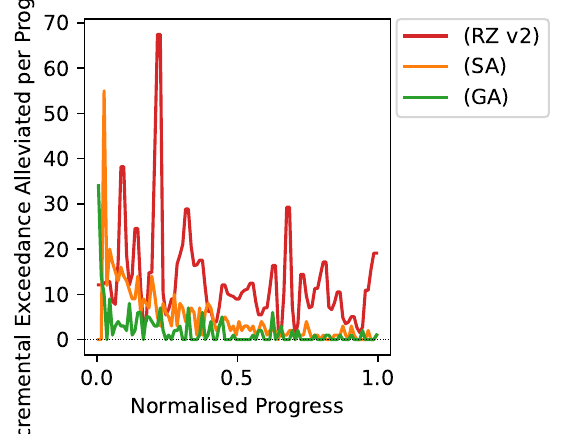}
		\caption{18/07}
	\end{subfigure}
	\hfill
	\begin{subfigure}[b]{0.48\textwidth}
		\centering
		\includegraphics[width=\textwidth]{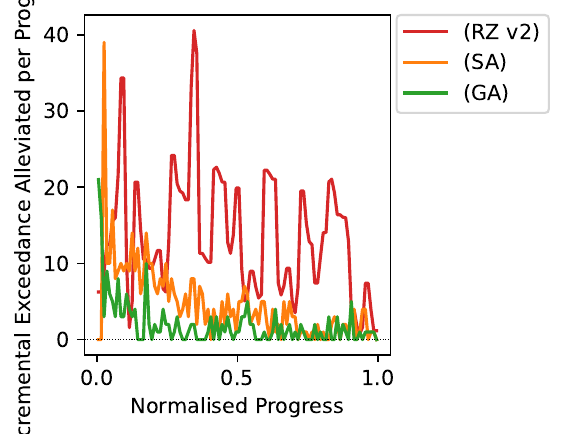}
		\caption{20/07}
	\end{subfigure}
	
	\caption{Comparison of step-wise contribution of $J_{CAP}$ to the general objective function. Higher is better.}
	\label{fig:multialgos_exceedance_leverage_derivative_2x2}
\end{figure}

\begin{figure}
	\centering
	\includegraphics[width=\linewidth]{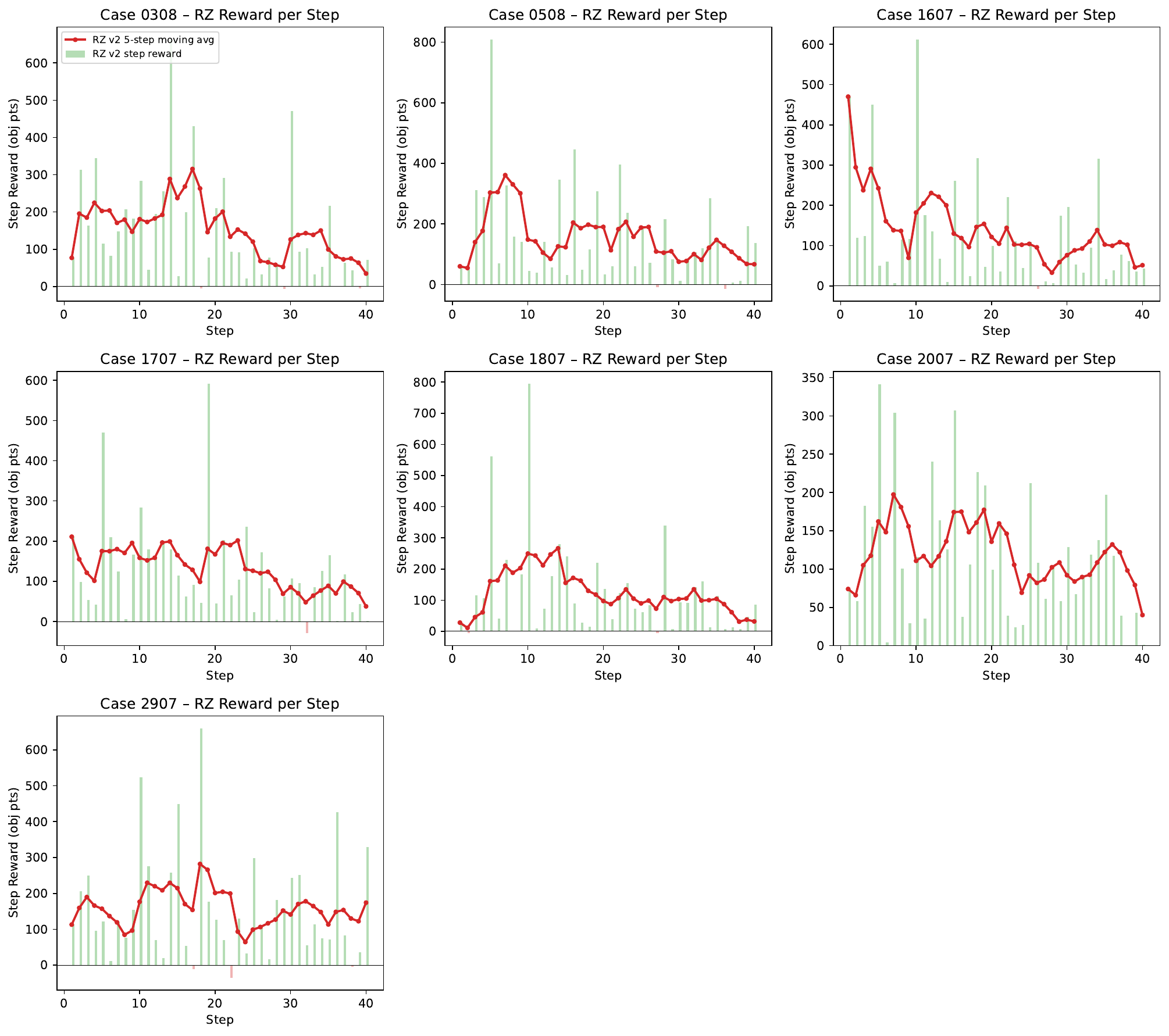}
	\caption{Reward per step and cumulative reward with the number of regulations imposed.}
	\label{fig:rzrewardperstep}
\end{figure}

A final observation is that, with additional computational budget, RZ still has room for further improvement. Figure \ref{fig:rzrewardperstep} shows that per-episode step rewards do not exhibit clear convergence by the end of the optimization process. In several cases, such as 16/07 and 29/07, the trend suggests that better network DCB states could be achieved with extended run time.

\subsection{Ablation Studies}
\subsubsection{RZ’s Delay Efficiency is Invariant to Problem Difficulty, Flow Composition, and Flow Size.}\label{subsec:delayeff}
While the general characteristics of RZ solutions have been established, it remains unclear whether their apparent inefficiency in delay usage depends on problem difficulty. In other words, given that RZ achieves higher objective values, it is plausible that additional delay is required to achieve further exceedance reduction beyond certain objective thresholds.

To investigate this, we derive three subcases from the full-day traffic demand: \emph{6h}, enforcing capacity constraints only between 06:00 and 12:00; \emph{12h}, enforcing between 04:00 and 16:00; and \emph{24h}, corresponding to full-day enforcement. All algorithmic hyperparameters are kept unchanged. We report results for four representative days: 17/07, 18/07, 20/07, and 05/08, which were randomly selected from the original seven-day dataset to limit computational cost.

\begin{figure}
	\centering
	\includegraphics[width=\linewidth]{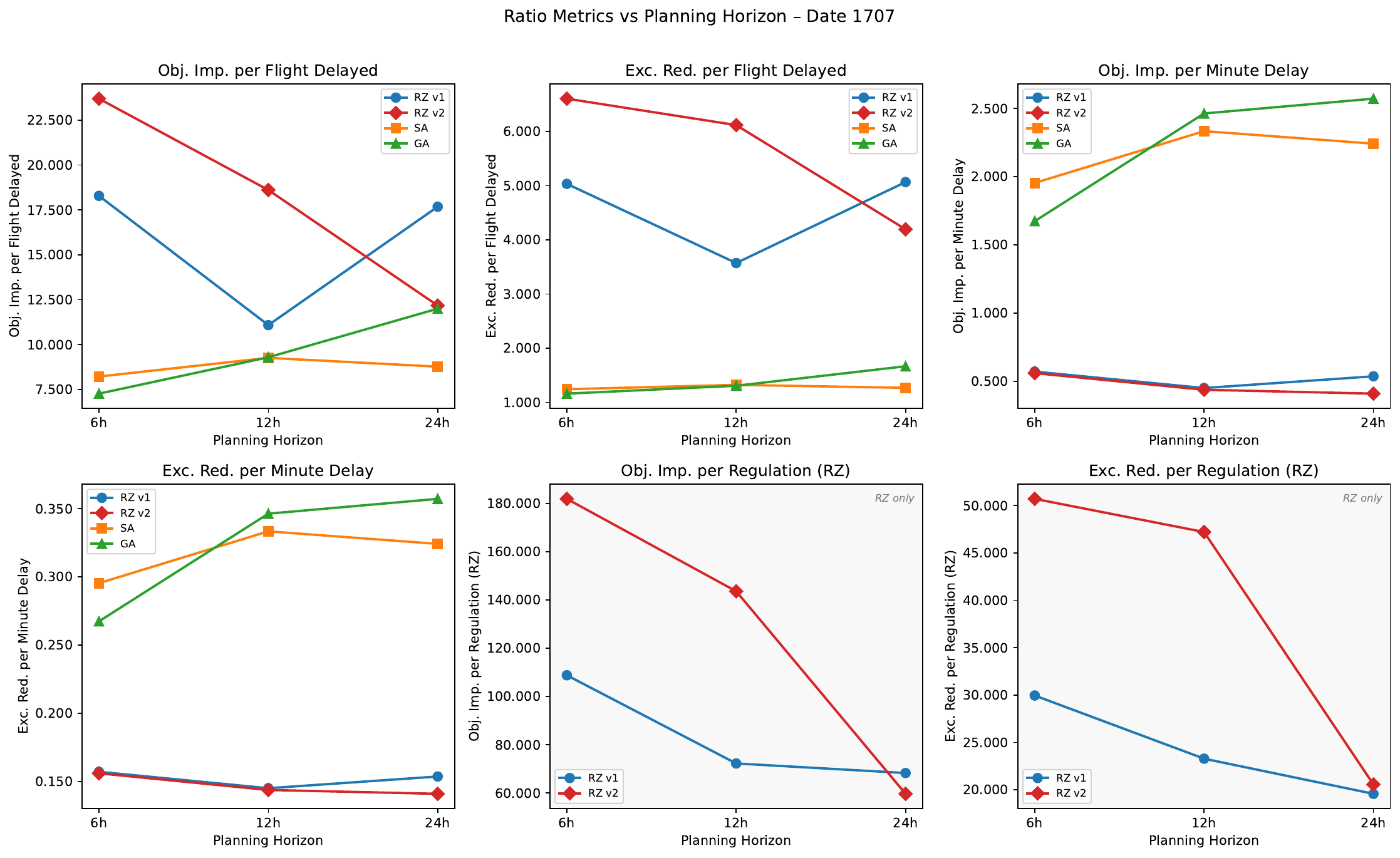}
	\caption{Comparison of ratio metrics for 3 subcases: 6h, 12h and full-day for the 17/07 case.}
	\label{fig:ratiometrics1707}
\end{figure}

\begin{figure}
	\centering
	\includegraphics[width=\linewidth]{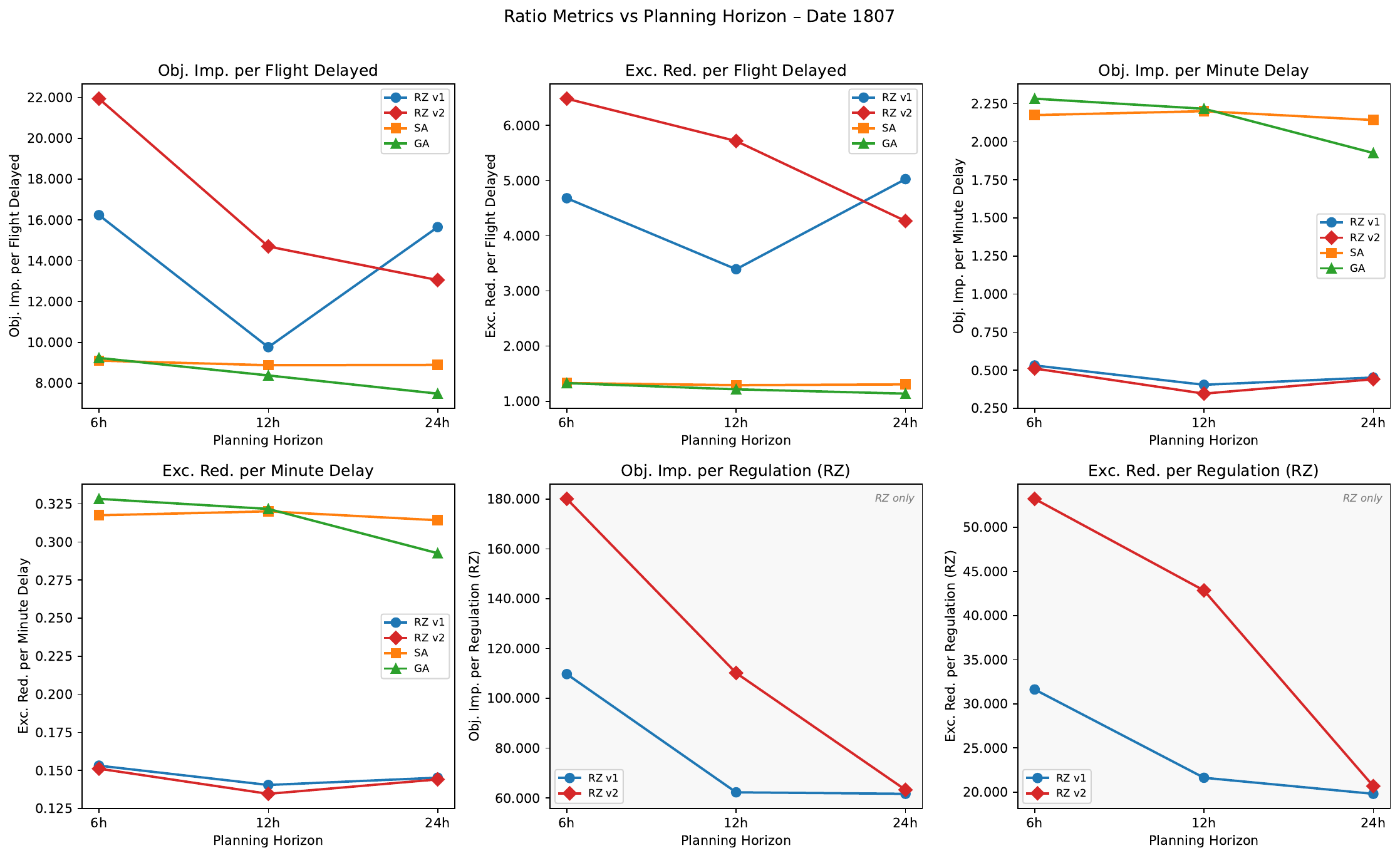}
	\caption{Comparison of ratio metrics for 3 subcases: 6h, 12h and full-day for the 18/07 case.}
	\label{fig:ratiometrics1807}
\end{figure}

\begin{figure}
	\centering
	\includegraphics[width=\linewidth]{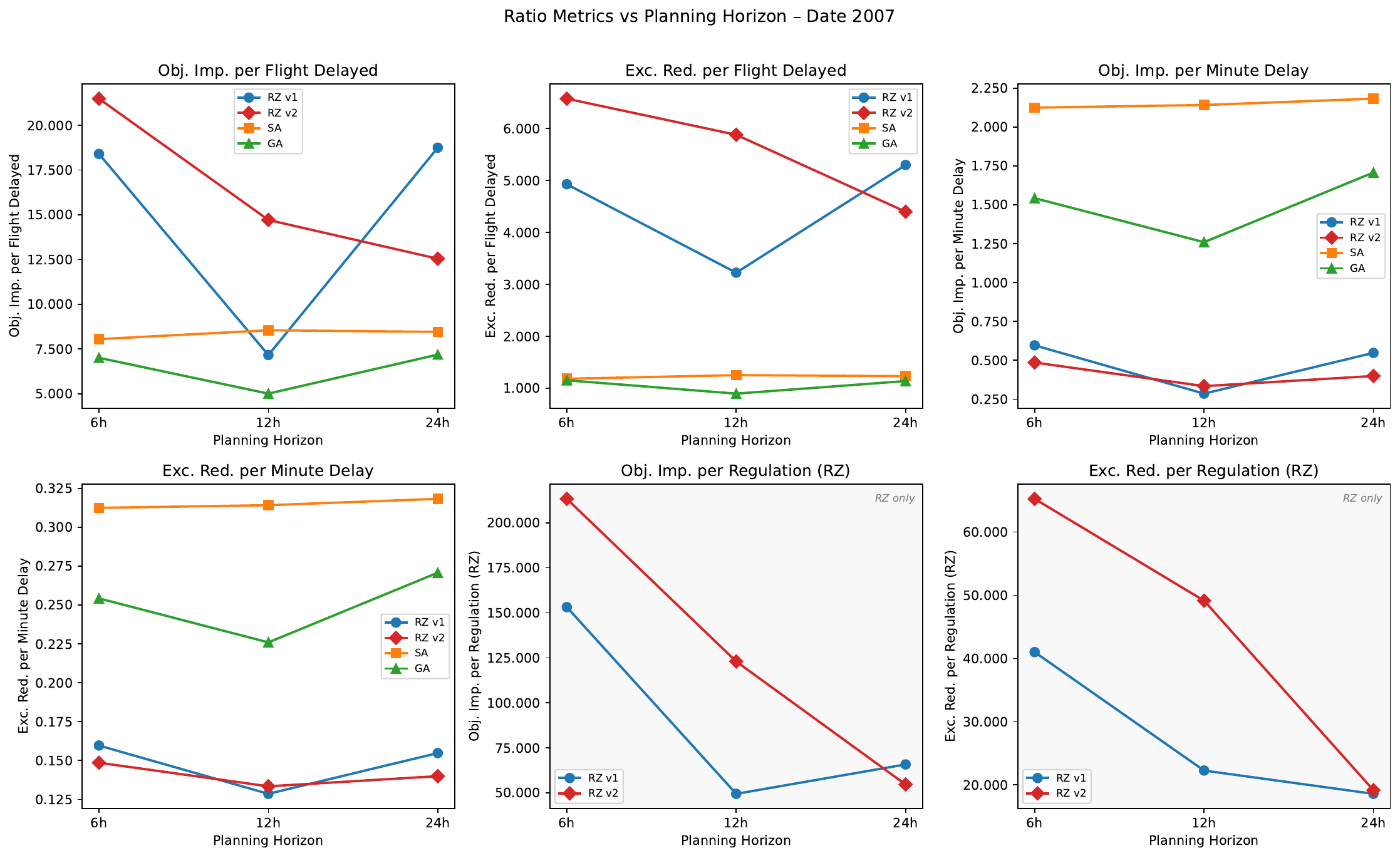}
	\caption{Comparison of ratio metrics for 3 subcases: 6h, 12h and full-day for the 20/07 case.}
	\label{fig:ratiometrics2007}
\end{figure}

\begin{figure}
	\centering
	\includegraphics[width=\linewidth]{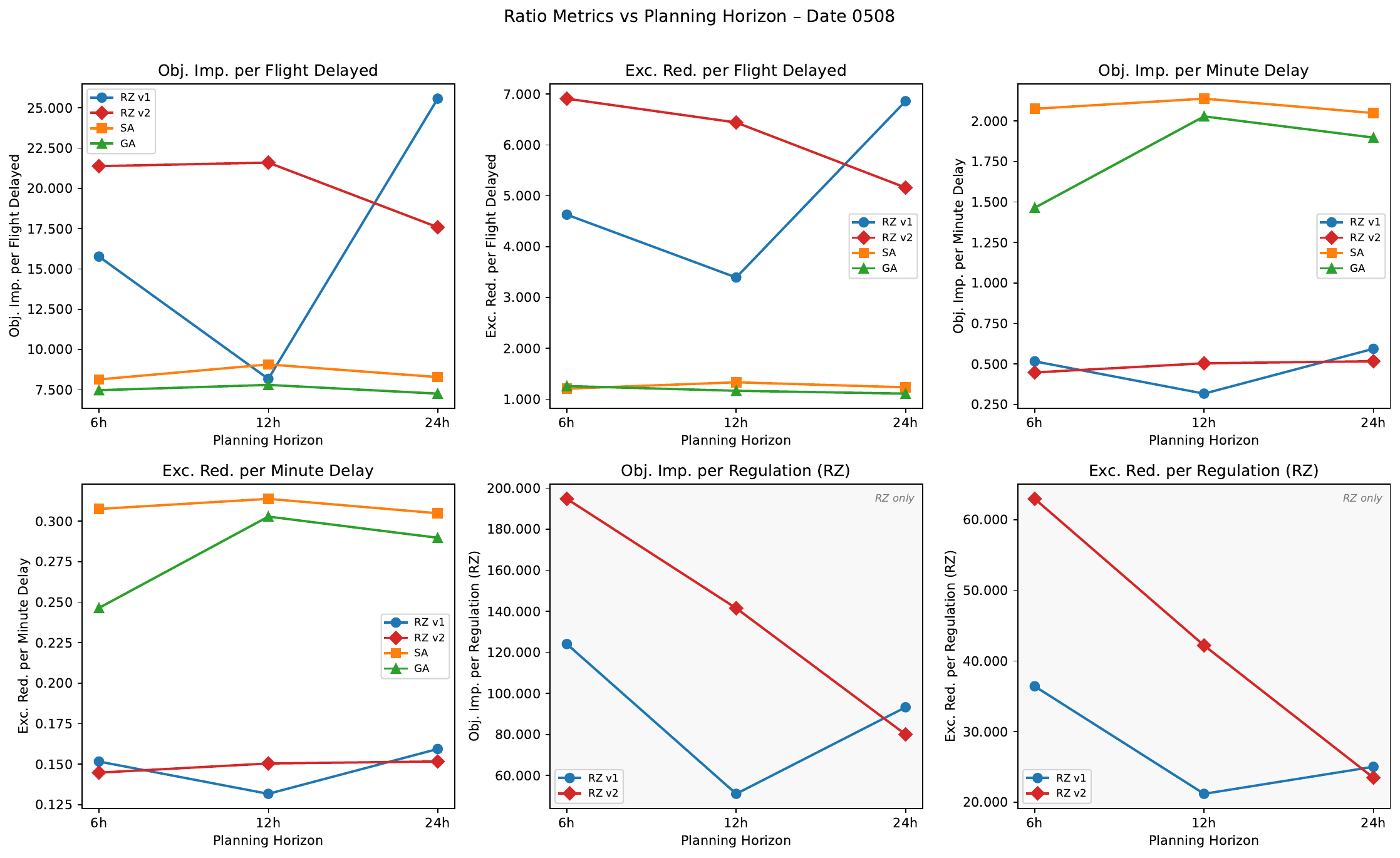}
	\caption{Comparison of ratio metrics for 3 subcases: 6h, 12h and full-day for the 05/08 case.}
	\label{fig:ratiometrics0508}
\end{figure}

Figures \ref{fig:ratiometrics1707}-\ref{fig:ratiometrics0508} show that RZ’s delay-spending behavior is largely invariant to problem size and difficulty. Exceedance reduction per minute of delay remains stable at around 0.15: roughly half that of flight-centric methods such as SA and GA. In contrast, performance ratios, such as objective improvement or exceedance reduction per delayed flight, decline with increasing problem difficulty, as expected.

One other possibility is that the flow selection process may introduce implicit regularization through flow size. We conducted another ablation study over three threshold values (0.53, 0.69, 0.82), which produce progressively larger flows, shows no clear trend in delay-use efficiency, however (Figure \ref{fig:flowthresholdsweep4panel}).

\begin{figure}
	\centering
	\includegraphics[width=\linewidth]{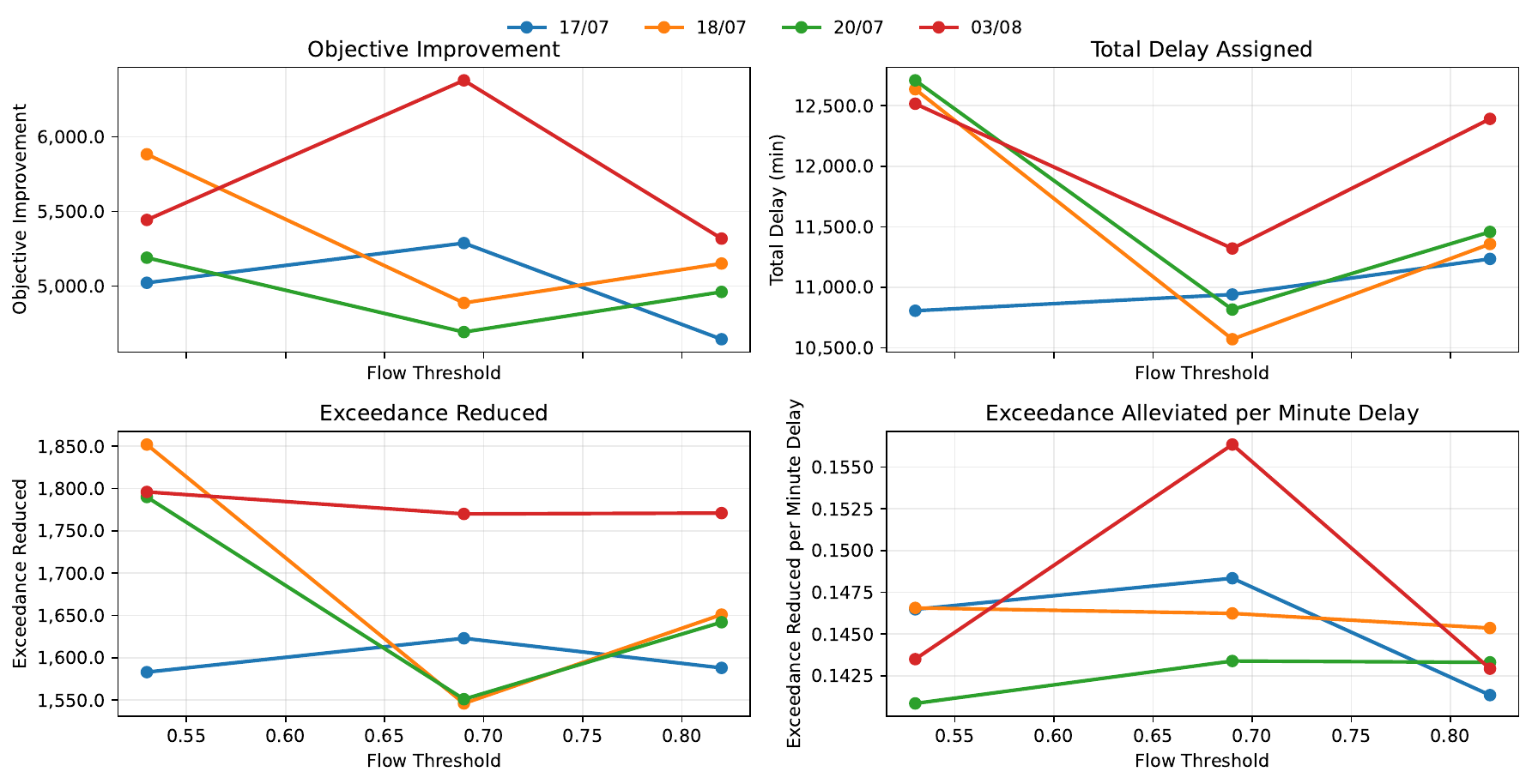}
	\caption{Comparison of Flow Threshold / Flow Size effects for four full-day cases of 17/07, 18/07, 20/07 and 03/08}
	\label{fig:flowthresholdsweep4panel}
\end{figure}

Because RZ only depends on three components: MCTS, Regulation Proposal and FPFS Slot Allocator, and that the delay efficiency does not deviating much from RZ v1, which used different heuristic sets for flow selection (and therefore, different flows were selected); the evidence together points in the direction that this lack of efficiency appears to be robust, and associate with the FPFS slot allocator. We hypothesize that this efficiency is the price that has to be paid to maintain the FPFS fairness constraint on the solution, but we admit that it is currently unknown the exact mechanism how this manifested at this moment.

\subsubsection{v2's NomRel and InLoad Heuristics Help Find Effective Flows Faster, and Where to Regulate May Be Less Important than Finding the Right Flows}

\begin{figure}
	\centering
	\includegraphics[width=\linewidth]{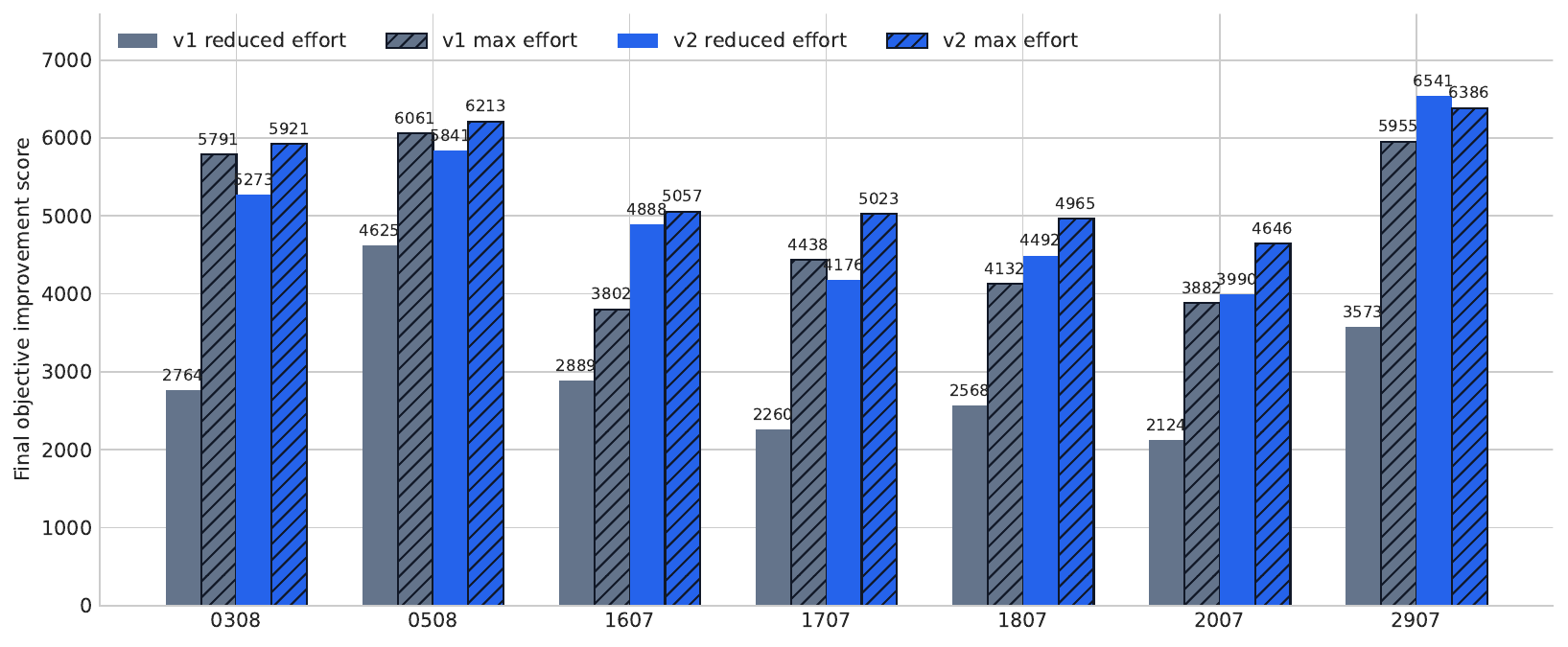}
	\caption{Final Objective Improvement comparison between RZ v1 and v2 for reduced effort (hotspots=5) and max effort (hotspots=20).}
	\label{fig:fulldayreducedeffortv1v2objimprovementfourseries}
\end{figure}

To assess how much the NomRel and InLoad heuristics improve computational efficiency within the full optimization pipeline, we conducted additional experiments on the same seven full-day cases, reducing the number of hotspots used for flow extraction from 20 to 5. This corresponds to an approximate 80\% reduction in computational effort. However, this does not translate directly into the same time savings: in practice, runtime only decreased by about 50\%, with total execution time ranging from 75 to 85 minutes, mostly due to logging and disk I/O overhead.

The results in Figure \ref{fig:fulldayreducedeffortv1v2objimprovementfourseries} show that the refined flow-selection heuristics substantially improved RZ's performance. Notably, the reduced-effort v2 configuration now matches the performance of v1 at full effort and remains within 5-10\% of v2 at full effort. This trend is consistent across all 7 cases, suggesting that effective flow-centric optimization does not require a large pool of hotspots, provided that high-quality flows are selected. In practice, this indicates that finding the specific reference location is less critical than identifying the high-leverage flows that span multiple traffic volumes and ACCs, as many hotspots appear to lead to the same flows identified.

\subsubsection{Objective Weight Variation Study: RZ Dominates Large-Scale and High-Exceedance Regimes}
In this section, we attempt to estimate the Pareto frontier through varying the exceedance weight $w_{CAP}$. The results are presented in Figure \ref{fig:alphaweightparetogrid} and \ref{fig:alphaweightratiogrid}.

\begin{figure}
	\centering
	\includegraphics[width=\linewidth]{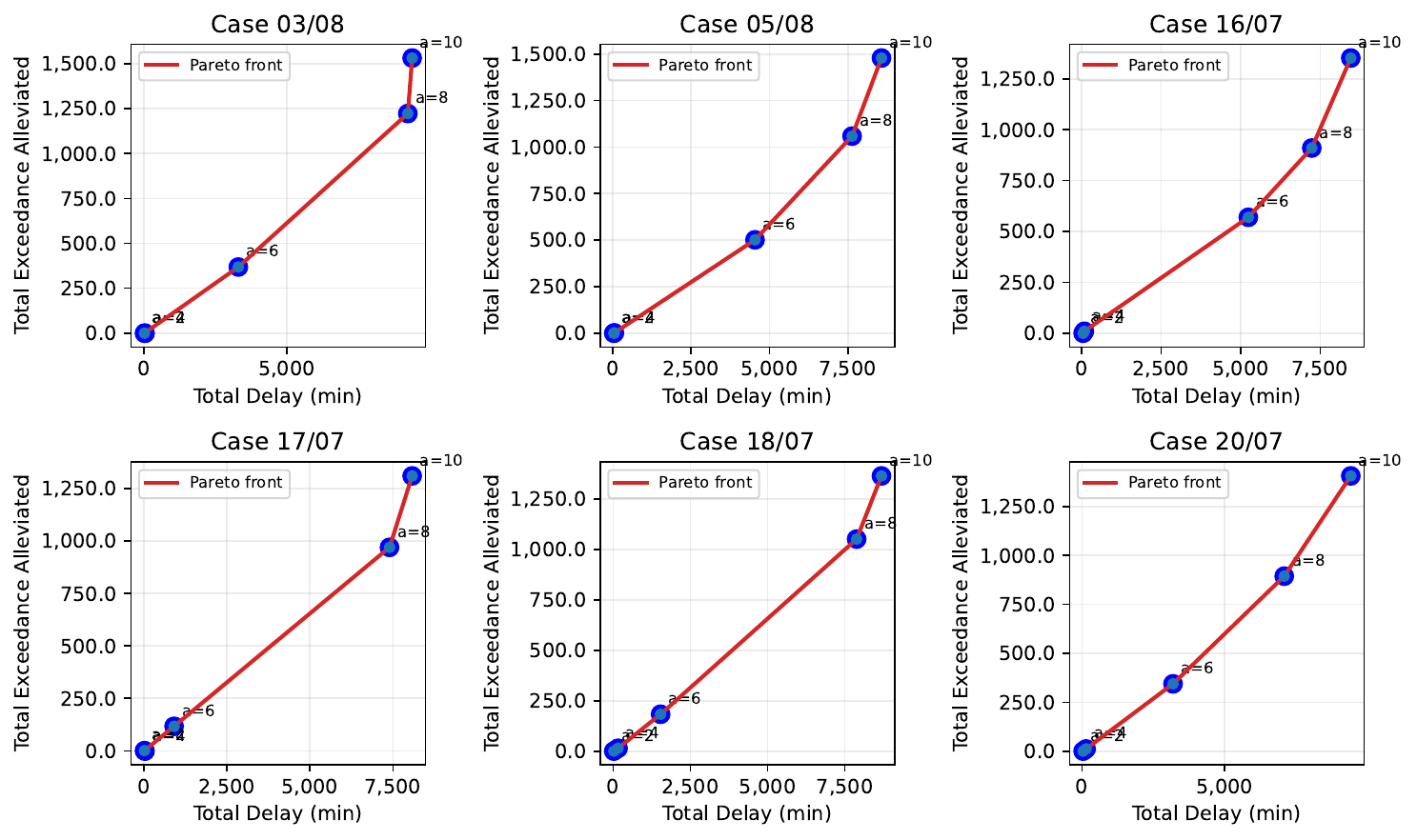}
	\caption{RZ's Pareto Frontier for six full-day cases with $w_{CAP}$ varying from 2 to 10.}
	\label{fig:alphaweightparetogrid}
\end{figure}

\begin{figure}
	\centering
	\includegraphics[width=\linewidth]{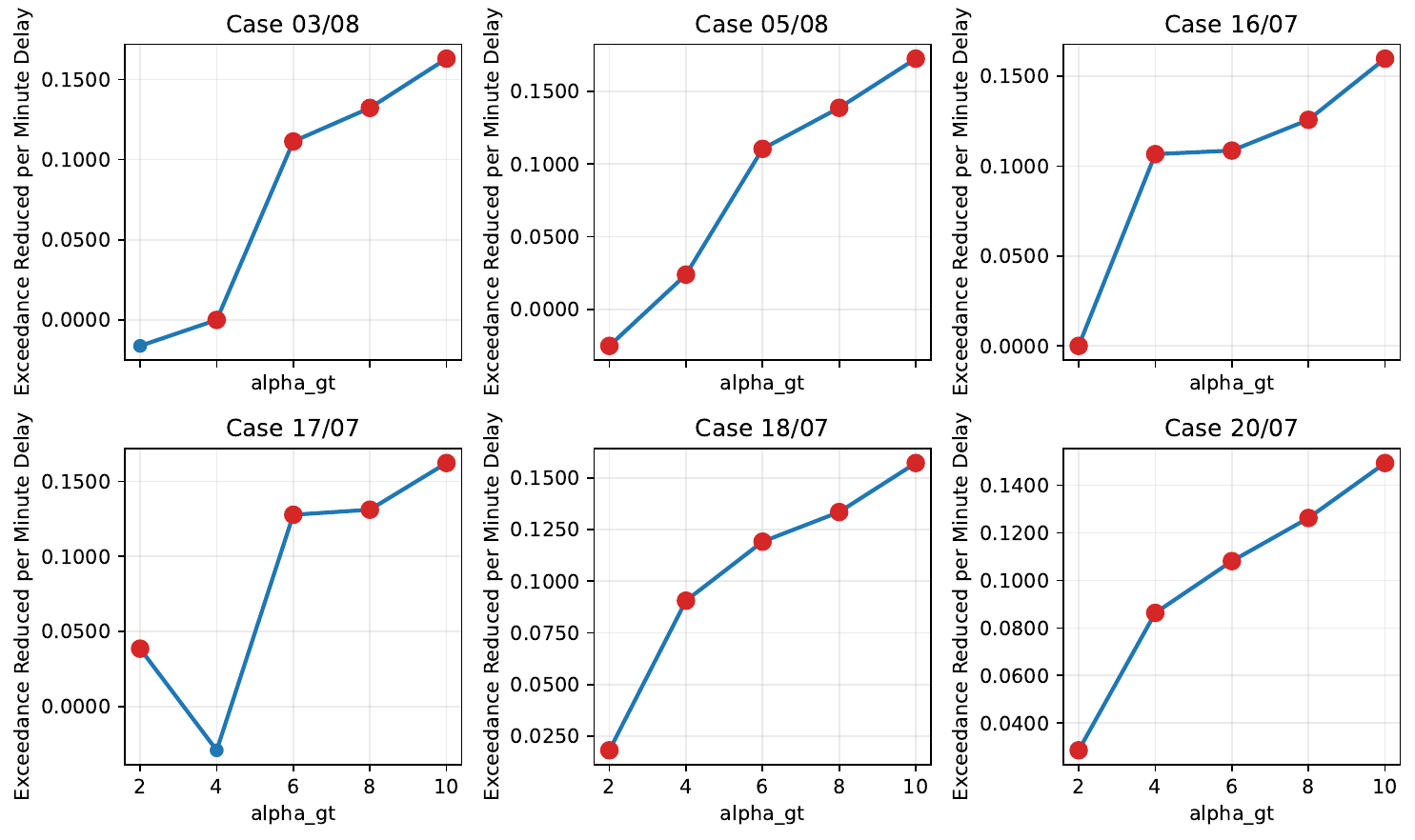}
	\caption{Variation of Exceedance Reduced per Minute Delay when $w_{CAP}$ was varied from 2 to 10.}
	\label{fig:alphaweightratiogrid}
\end{figure}

The results reveal a consistent and robust pattern in RZ’s behavior. Across all cases, the Pareto front remains stable and favors effective exceedance reduction as larger delay budgets are allowed, given a fixed number of regulations. Conversely, forcing RZ to prioritize delay minimization leads to low performance, even solution collapses, as observed at low $\alpha = w_{CAP}$ values (2 and 4). Figure \ref{fig:alphaweightratiogrid} presents the finding in greater details: exceedance reduction per minute of delay is observed to climb steadily as more weight is put on exceedance.

Taken together, these results strongly indicate that RZ is most effective in high-exceedance, high-delay regimes. This reinforces our earlier findings in \cite{hoang2026flocon} that RZ is particularly well suited to large-scale, continent-wide problems, rather than smaller regional or ACC-level scenarios.

\subsubsection{Overcoming Regulation Cascading is Essential to Unlock High Performance: an Ablation of the MCTS component}

\begin{table}[!htpb]
	\centering
	\setlength{\tabcolsep}{4pt}
	\renewcommand{\arraystretch}{1.1}
	\begin{tabular}{llcccc}
		\toprule
		Case & Policy & \makecell{Final Objective\\Improvement} & \makecell{Exceedances\\Alleviated} & \makecell{Total Delay\\(min)} & Regulations \\
		\midrule
		\multirow{2}{*}{17/07}
		& RZ v2  & 5023.0 & 1311.0 & 8087.0 & 40 \\
		& BRPP & 2049.0 & 703.0  & 4981.0 & 40 \\
		\midrule
		\multirow{2}{*}{18/07}
		& RZ v2  & 4965.0 & 1360.0 & 8635.0 & 40 \\
		& BRPP & 2000.0 & 651.0  & 4510.0 & 40 \\
		\midrule
		\multirow{2}{*}{20/07}
		& RZ v2  & 4646.0 & 1386.0 & 9214.0 & 40 \\
		& BRPP & 2219.0 & 871.0  & 6491.0 & 40 \\
		\midrule
		\multirow{2}{*}{03/08}
		& RZ v2  & 5921.0 & 1481.0 & 8889.0 & 40 \\
		& BRPP & 1598.0 & 688.0  & 5282.0 & 40 \\
		\bottomrule
	\end{tabular}
	\caption{Comparison of RZ v2 (MCTS-enabled) and Best Regen Proposal Policy (BRPP/MCTS-disabled) performance across four full-day cases.}
	\label{tab:rzv2_greedy_comparison}
\end{table}

To study the contribution of the MCTS loop, we conducted an ablation study using a simplified policy, termed \emph{Best Regulation Proposal Policy} (BRPP), in which only the highest-reward proposal is selected at each step. The key difference from RZ is the absence of the lookahead component: whereas RZ explores multiple steps ahead to identify an optimal sequence of regulations, BRPP only selects the best immediate action. For comparability, we constrain BRPP to apply the same number of regulations as in the RZ solution.

\begin{figure}
	\centering
	\includegraphics[width=\linewidth]{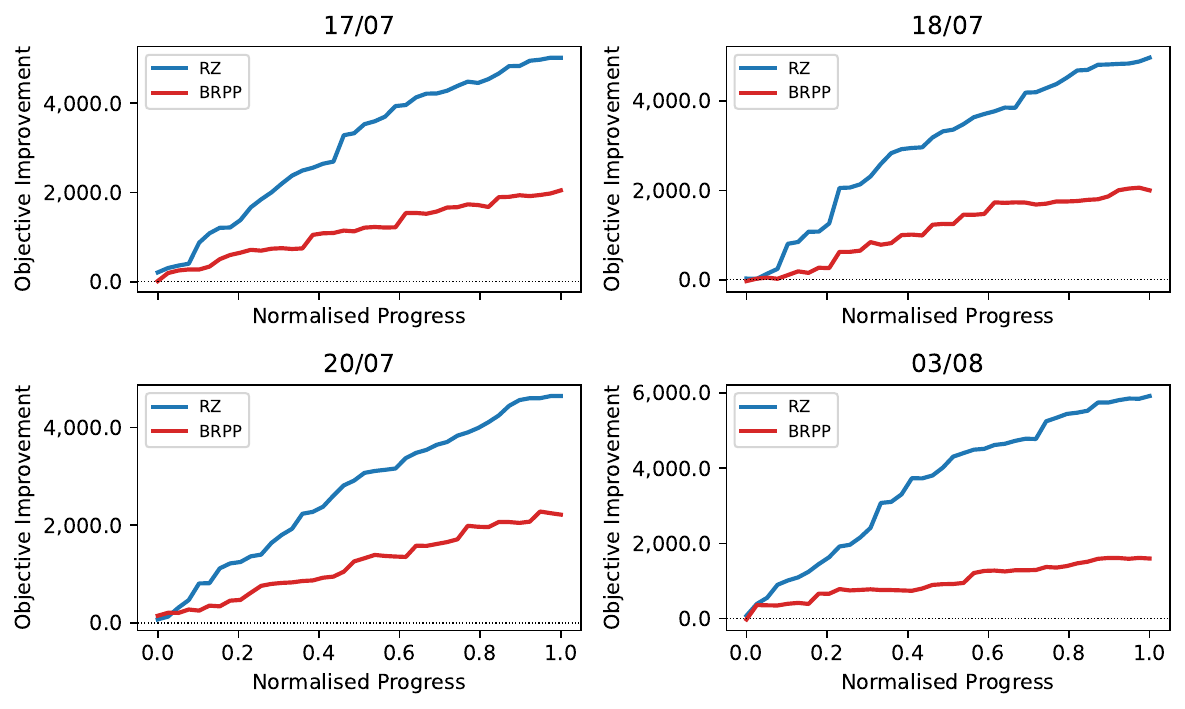}
	\caption{Improvement of Objective between RZ v2 (MCTS-enabled) and BRPP (MCTS-disabled)}
	\label{fig:objectiveimprovementrzvsgreedy}
\end{figure}

The results demonstrate that removing MCTS significantly degrades performance, with the final objective reaching only about half of that achieved with MCTS (Table \ref{tab:rzv2_greedy_comparison}). The objective improvement trajectory also exhibits more plateau segments, indicating that locally optimal ``best'' proposals often yield minimal global benefit. This implies that the MCTS component of RZ plays an important role in mitigating Regulation Cascading (RC), and that RC could account for up to 50\% loss of effectiveness of the regulation plan.

\section{Discussion}
\subsection{A Broader Context for Interpretation}
Before interpreting our results, we situate them against comparable work in the literature and emphasize about differences in problem scope, geographic coverage, and action space prevent direct numerical comparison. 

Our experiments span roughly 22,000 to 25,000 flights, which is substantially larger than the scale typically reported. The two studies we could find of comparable scale are \cite{delahaye2018simulated} and \cite{aerospace8100288}, covering 29,852 pan-European and 8,836 French-airspace flights respectively; however, both address 4D trajectory deconfliction rather than DCB. \cite{delahaye2018simulated}, in particular, adjusts not only takeoff times but also reroutes flights, reporting up to 46.23\% of flights receiving route extensions. The isolated performance of delay-only actions in that setting is not reported, so the numbers are not directly comparable to ours.

\cite{Dalmau2022Optimal} conducted experiments on R-NEST, and its first phase resembles the Greedy policy we present. Their scope, however, is restricted to the ACCs involved in the ISOBAR2 project: LFMMCTA (Marseille), LFEECTA (Reims), LECMCTA (Madrid), LECBCTA (Barcelona), and LECPCTA (Palma). In our broader experiments, applying a greedy strategy to LF or LE airspaces strongly induced secondary exceedances, predominantly in the EG airspaces, an effect that would not surface within the narrower ISOBAR2 scope. \cite{Dalmau2021Indispensable} targets a distinct sub-problem: cancelling unnecessary imposed regulations rather than constructing a regulation chain from scratch. Furthermore, access to historical regulation data was limited, so a direct comparison against authority-issued regulations is infeasible for RZ.

\subsection{Flow-centric and Flight-centric Dominate Different Problem Regimes}
The results in Section \ref{sec:results} indicate that flow-centric methods such as RZ v2 and RZ can achieve better DCB solutions, but with distinct structural characteristics. In particular, RZ prioritizes exceedance reduction, whereas flight-centric methods (SA and GA) tend to favor delay optimization (Figure \ref{fig:rz-sa-ga-axes}). Furthermore, by operating in a reduced decision space, RZ avoids the early plateauing observed in SA and GA and continues to improve throughout the optimization process (Figure \ref{fig:rzrewardperstep}). This feature makes RZ inherently more scalable and better suited to larger-scale problems.

Importantly, these results suggest that flow-centric and flight-centric approaches are not direct competitors but complementary methods, each excelling in different regimes. Figure \ref{fig:rz-sa-ga-axes} also points to a promising research direction: understanding the source of RZ’s lower delay efficiency. One potential avenue is the development of hybrid strategies, where RZ solutions are used to warm-start flight-centric methods. This hybrid approach potentially preserve RZ's exceedance advantages while salvaging its delay efficiency.

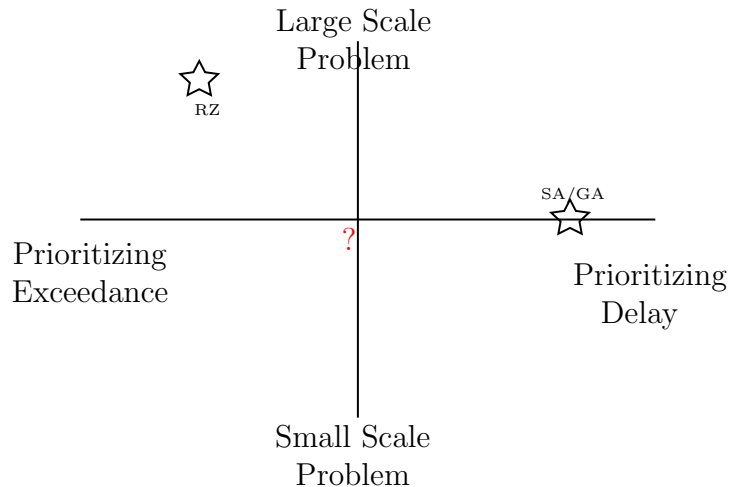
\begin{figure}
	\centering
	\begin{tikzpicture}[x=0.75pt,y=0.75pt,yscale=-1,xscale=1]
	
	\draw    (50,120) -- (340,120) ;
	\draw    (190,30) -- (190,220) ;
	\draw   (110,40) -- (112.94,45.66) -- (119.51,46.56) -- (114.76,50.97) -- (115.88,57.19) -- (110,54.25) -- (104.12,57.19) -- (105.24,50.97) -- (100.49,46.56) -- (107.06,45.66) -- cycle ;
	\draw   (297,110) -- (299.94,115.66) -- (306.51,116.56) -- (301.76,120.97) -- (302.88,127.19) -- (297,124.25) -- (291.12,127.19) -- (292.24,120.97) -- (287.49,116.56) -- (294.06,115.66) -- cycle ;
	
	\draw (14,130) node [anchor=north west][inner sep=0.75pt]  [font=\normalsize] [align=left] {\begin{minipage}[lt]{58.86pt}\setlength\topsep{0pt}
			\begin{center}
				Prioritizing\\Exceedance
			\end{center}
			
	\end{minipage}};
	\draw (297,132) node [anchor=north west][inner sep=0.75pt]  [font=\normalsize] [align=left] {\begin{minipage}[lt]{50.34pt}\setlength\topsep{0pt}
			\begin{center}
				Prioritizing\\Delay
			\end{center}
			
	\end{minipage}};
	\draw (121,12) node [anchor=north west][inner sep=0.75pt]  [font=\normalsize] [align=left] {\begin{minipage}[lt]{97.97pt}\setlength\topsep{0pt}
			\begin{center}
				Large Scale Problem
			\end{center}
			
	\end{minipage}};
	\draw (121,222) node [anchor=north west][inner sep=0.75pt]  [font=\normalsize] [align=left] {\begin{minipage}[lt]{97.39pt}\setlength\topsep{0pt}
			\begin{center}
				Small Scale Problem
			\end{center}
			
	\end{minipage}};
	\draw (106.12,60.19) node [anchor=north west][inner sep=0.75pt]  [font=\tiny] [align=left] {RZ};
	\draw (281,102) node [anchor=north west][inner sep=0.75pt]  [font=\tiny] [align=left] {SA/GA};
	\draw (200,130) node  [color={rgb, 255:red, 255; green, 0; blue, 0 }  ,opacity=1 ] [align=left] {\begin{minipage}[lt]{27.2pt}\setlength\topsep{0pt}
			?
	\end{minipage}};

\end{tikzpicture}
	\caption{Positioning of RZ against flight-centric methods in categories of DCB problems.}
	\label{fig:rz-sa-ga-axes}
\end{figure}

\subsection{Self-Regulating Regulations}
Another important question is whether RZ offers benefits beyond delay efficiency, aside from its fairness properties. From a queueing perspective, one would expect some degree of inherent robustness in the solution, particularly in the presence of operational uncertainty.

In practice, regulations are not applied once in isolation. As conditions evolve due to airline adjustments, weather developments, or other factors, CASA is rerun periodically (typically every 15 minutes), producing updated delay assignments before the lock-in horizon. In flight-centric optimization, there is generally no guarantee that successive solutions remain close to one another unless explicitly constrained through an additional term in the objective function. By contrast, under FCFS-based mechanisms, FPFS constraints preserve ordering, limiting arbitrary reassignments. This suggests that FPFS may introduce a stabilizing effect, reducing ``plan thrashing,'' which remains a key barrier to operational deployment. We call this feature \emph{Self-Regulating Regulations,} as we predict that the optimized regulations could be minimally updated or tuned without costly rerun from scratch, freeing operators for other work like coordination. Future work will investigate this hypothesis by incorporating scenario uncertainty and evaluating RZ under dynamic conditions.

Finally, we recognize that a complete evaluation of RZ on the authoritative R-NEST system is necessary to ensure operational compatibility. 

\section{Conclusion}
This paper introduced Regulation Zero 2, a flow-centric sequential planning framework that acts as a meta-planner for existing FPFS slot allocation machinery rather than replacing it. By operating natively in the regulation space and planning regulations as an ordered sequence, the framework directly targets Regulation Cascading---the nonlinear interactions between regulations that undermine predictability in current operations.

Three findings stand out from our evaluation on seven pan-European summer-peak days. First, RZ v2 achieves state-of-the-art DCB performance, delivering 30–40\% higher objective improvements than Simulated Annealing and substantially outperforming NSGA-II and greedy capping baselines, while concentrating impact on roughly 25\% fewer flights. Second, the ablation of the MCTS lookahead component degraded performance by approximately 50\%, providing direct quantitative evidence that Regulation Cascading is a first-order concern and that sequential planning is essential rather than optional. Third, the new NomRel and InLoad heuristics enabled a reduced-effort configuration to match full-effort v1 performance, suggesting that identifying high-leverage flows matters more than exhaustively searching over reference locations.

Future work will focus on validating the Self-Regulating Regulations hypothesis under demand and capacity uncertainty, exploring hybrid strategies that combine RZ's exceedance-reduction strength with the delay efficiency of flight-centric methods, and confirming operational compatibility through evaluation on the authoritative R-NEST system.

\appendix
\section{Regulation Zero as Regularized Policy Optimization}\label{sec:poliopt}

This appendix demonstrates that the formulation of Regulation Zero introduced in Section~\ref{sec:method} can be interpreted as a form of \emph{regularized policy optimization} that jointly employs forward and reverse Kullback-Leibler (KL) divergences to shape both hotspot and regulation proposal selections. For tractability, we focus on the simplified setting where \emph{only one hotspot/expansion attempt is associated with each node in the search tree}, deferring extensions for future research.

\subsection{Regulation Zero attempts to track a policy regularized by a forward KL for hotspot selection and reverse KL for proposal selection}
If we define the factorized policy $\pi(h,r|s)=\pi_H(h|s)\pi(r|h,s)$ over the two priors, and the corresponding regularization term as:
\begin{equation}
	\Omega_s(\pi) = \tau_h \kl{\pi_H(\cdot | s)}{P_H(\cdot | s)}+ \expect{h \sim \pi_H}{ \lambda_R (s,h) \kl{P_R(\cdot|h,s)}{\pi_R(\cdot|h,s))}}
\end{equation}

Define a general vector of state backup values $q = q(s,h,r)$, we define the \textit{regularized objective} as:
\begin{equation}\label{eq:gs_problem}
	G_s(q) = \sup_\pi [\expect{h,r \sim \pi}{q(s,h,r) - \Omega_s(\pi)}]
\end{equation}

The following Proposition shows that the solution to $G_s(q)$ has a decomposition form:

\begin{proposition}\label{prop:pi_ref}
	For each \(h\), let \(q_h(\cdot) = q(s,h,\cdot)\) and \(\lambda_h = \lambda_R(s,h)\). The inner maximizer \(\bar{\pi}_R(\cdot|h,s)\) and its attained value \(U_h(s)\) solve
	\[
	\bar{\pi}_R(r|h,s) = \frac{\lambda_h \, P_R(r|h,s)}{\alpha_h - q_h(r)},
	\quad
	\text{with } \alpha_h > \max_r q_h(r) \text{ chosen s.t. } \sum_r \bar{\pi}_R(r|h,s) = 1,
	\]
	\[
	U_h(s) = \sup_{y} \left[ q_h \cdot y - \lambda_h \, \kl{P_R}{y} \right] = \alpha_h - \lambda_h - \lambda_h \sum_r P_R(r|h,s) \log\!\left(\frac{\alpha_h - q_h(r)}{\lambda_h}\right).
	\]
	With inclusive inner values \(U_h(s)\),
	\[
	\bar{\pi}_H(h|s) \propto P_H(h|s) \exp\!\left(U_h(s) / \tau_h\right),
	\quad
	\text{and}
	\quad
	G_s(q) = \tau_h \log \sum_h P_H(h|s) \exp\!\left(U_h(s) / \tau_h\right).
	\]
	Thus \(\bar{\pi}\) is the nested solution
	\[
	\bar{\pi}_H(h|s) \propto P_H(h|s) \exp\!\left(U_h / \tau_h\right),
	\qquad
	\bar{\pi}_R(\cdot|h,s) \text{ as above.}
	\]
\end{proposition}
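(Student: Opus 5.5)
Since $\Omega_s$ is separable across the two levels, we will solve the problem in two nested stages. Write $\pi(h,r|s)=\pi_H(h|s)\pi_R(r|h,s)$. Because the regulariser is subtracted outside the expectation, the objective in \eqref{eq:gs_problem} becomes
\[
\sum_h \pi_H(h|s)\Bigl[\textstyle\sum_r \pi_R(r|h,s)q_h(r) - \lambda_h \kl{P_R(\cdot|h,s)}{\pi_R(\cdot|h,s)}\Bigr] - \tau_h \kl{\pi_H(\cdot|s)}{P_H(\cdot|s)}.
\]
For fixed $\pi_H$, the choice of $\pi_R(\cdot|h,s)$ enters only through the bracket indexed by $h$. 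The bracket is multiplied by $\pi_H(h|s)\ge 0$. Hence the supremum over the conditional policies can be taken pointwise in $h$, and it yields $U_h(s)$. The problem then reduces to $\sup_{\pi_H}\bigl[\sum_h \pi_H(h)U_h - \tau_h\kl{\pi_H}{P_H}\bigr]$.

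\textbf{Inner problem.} We maximise $\sum_r y_r q_h(r) + \lambda_h\sum_r P_R(r)\log y_r$ over the simplex. The constant $-\lambda_h\sum_r P_R\log P_R$ is carried along separately. The objective is strictly concave in $y$, and $\log y_r\to-\infty$ whenever $P_R(r)>0$ and $y_r\to 0$. Since the priors are strictly positive, the maximiser is interior. It is therefore characterised by a Lagrangian with a single multiplier $\alpha_h$ for $\sum_r y_r=1$. Stationarity gives $q_h(r)+\lambda_h P_R(r)/y_r=\alpha_h$, which is the stated $\bar\pi_R=\lambda_h P_R/(\alpha_h-q_h)$. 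Positivity forces $\alpha_h>\max_r q_h(r)$. On $(\max q_h,\infty)$ the map $\alpha\mapsto\sum_r \lambda_h P_R(r)/(\alpha-q_h(r))$ is continuous and strictly decreasing. It tends to $+\infty$ as $\alpha\downarrow\max q_h$ and to $0$ as $\alpha\to\infty$. So a unique normalising $\alpha_h$ exists.

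To evaluate $U_h$, substitute $q_h(r)=\alpha_h-\lambda_h P_R(r)/\bar\pi_R(r)$ into $\sum_r\bar\pi_R q_h$. This gives $\alpha_h-\lambda_h$. Next, $\log(P_R/\bar\pi_R)=\log((\alpha_h-q_h)/\lambda_h)$, so the KL term contributes $-\lambda_h\sum_r P_R\log((\alpha_h-q_h(r))/\lambda_h)$. Together these reproduce the displayed $U_h(s)$.

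\textbf{Outer problem.} This is the standard Gibbs variational principle (Donsker--Varadhan). For any distribution $\pi_H$,
\[
\sum_h\pi_H U_h-\tau_h\kl{\pi_H}{P_H} = \tau_h\log Z-\tau_h\kl{\pi_H}{\bar\pi_H},
\]
where $\bar\pi_H\propto P_H e^{U_h/\tau_h}$ and $Z=\sum_h P_H e^{U_h/\tau_h}$. This identity can be verified by expanding the logarithms. Nonnegativity of KL then gives the maximiser $\bar\pi_H$ and the value $G_s(q)=\tau_h\log Z$.

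Finally, we must justify the interchange $\sup_{\pi_H,\pi_R}=\sup_{\pi_H}\sum_h\pi_H\sup_{\pi_R}$. This holds because the inner supremum is attained for every $h$ independently of $\pi_H$. Where $\pi_H(h)=0$, the choice of $\pi_R(\cdot|h)$ is irrelevant.

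I expect the main obstacle to be bookkeeping rather than any deep step. The expectation and regulariser notation in \eqref{eq:gs_problem} must be read as $\mathbb{E}[q]-\Omega_s$. The forward KL $\kl{P_R}{y}$ must be handled carefully: it is concave in $y$ after the sign change, not of log-sum-exp type, so there is no closed-form normaliser and we rely on existence of $\alpha_h$. Getting the constants in $U_h$ right, in particular the $-\lambda_h$ and the $\log\lambda_h$ inside the sum, is where errors are most likely. I will double-check them by testing the case of a single regulation proposal, where $U_h$ should reduce to $q_h$.
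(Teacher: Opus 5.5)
Your proposal is correct and follows essentially the same route as the paper. You split the objective into a nested supremum, solve the inner forward-KL problem with a single normalising Lagrange multiplier, substitute the stationarity condition to obtain $U_h(s)$, and treat the outer problem as the KL-regularised softmax (Gibbs) problem. Your extra steps make rigorous several points the paper only asserts or cites: existence and uniqueness of $\alpha_h$ by monotonicity, interiority of the maximiser from $P_R>0$, the explicit identity giving $G_s(q)=\tau_h\log\sum_h P_H(h|s)\exp(U_h(s)/\tau_h)$, and the justification for taking the inner supremum pointwise in $h$.
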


\begin{proof}
	We begin by expanding $G_s(q)$:
	
	\begin{align*}
		G_s(q) &= \sup_\pi [\expect{h,r \sim \pi}{q(s,h,r)] - \Omega_s(\pi)} = \sup_\pi [\sum_{h, r} \pi_H(h|s)\pi_R(r|h,s) q(s,h,r) - \Omega_s(\pi)]\\
		&= \sup_\pi [ \sum_{h, r} \pi_H(h|s) \pi_R(r|h,s) q(s,h,r) - \tau_h \kl{\pi_H(\cdot | s)}{P_H(\cdot | s)} \\
		&- \sum_h \pi_H(h|s) \lambda_R (s,h) \kl{P_R(\cdot|h,s)}{\pi_R(\cdot|h,s))}] \\
		&= \sup_\pi \bigg[ \sum_h \pi_H(h|s) [\sum_r \pi_R(r|h,s) q(s,h,r) - \lambda_R (s,h) \kl{P_R(\cdot|h,s)}{\pi_R(\cdot|h,s))}] \\
		&- \tau_h \kl{\pi_H(\cdot | s)}{P_H(\cdot | s)} \bigg] \\
		&= \sup_{\pi_H} \bigg[ \sum_h \pi_H(h|s) \sup_{\pi_R} [\sum_r \pi_R(r|h,s) q(s,h,r) - \lambda_R (s,h) \kl{P_R(\cdot|h,s)}{\pi_R(\cdot|h,s))}] \\
		&- \tau_h \kl{\pi_H(\cdot | s)}{P_H(\cdot | s)} \bigg]
	\end{align*}
	
	We define the inner problem:
	$U_h(q_h) = \sup_{y} [q_h^\intercal y - \lambda_R (s,h) \kl{P_R(\cdot|h,s)}{y}]$. Given $\lambda_R (s,h) > 0$, the problem is convex in y, and given the constraint for $y$ to be a valid probability distribution, we could define the Lagrangian:
	\begin{equation}
		\mathcal{L} = q_h^\intercal y - \lambda_R \kl{P_R}{y} - \alpha (y_r - 1)
	\end{equation}
	
	The stationary condition gives:
	\begin{equation}\label{eq:inner_loop_sol}
		y^* = \frac{\lambda P_R}{\alpha - q_h}
	\end{equation}
	and $\alpha$ must solve for:
	\begin{equation}
		\sum_r \frac{\lambda P_R(r)}{\alpha - q_h(r)} = 1
	\end{equation}
	in order to normalize the distribution $y = \pi_R$. The attained value corresponding to $y^*$ is:
	\[
	U_h(q_h) = q_h^\intercal y^* - \lambda_R \sum_r P_R(r) \log \frac{P_R(r)}{y^*(r)} = \alpha - \lambda_R - \lambda_R \sum_r P_R(r) \log \bigg(\frac{\alpha - q_r}{\lambda}\bigg)
	\]
	
	We notice that this expression has the same form as in \cite{grill2020monte}. Now, given that the inner problem attains its maximum at \( U_h(q_h) \) as derived above, the outer optimization problem can be expressed in the following closed form:
	
	\begin{equation}
		\sup_{x \in \Delta(\mathcal{H})} \; x^\top U(q_x) - \tau_h \, \mathrm{KL}(x \, \| \, P_H),
	\end{equation}
	
	where \( \Delta(\mathcal{H}) \) represents the probability simplex over \(\mathcal{H}\).
	
	Because this corresponds to the classical maximum-entropy regularized policy optimization formulation \cite{ziebart2010modeling}, the optimal solution admits the familiar log-sum-exp (softmax) form:
	
	\begin{equation}
		x_h^* \propto P_H(h) \, \exp\!\left(\frac{U_h}{\tau_h}\right),
		\qquad
		x^*(h) = 
		\frac{P_H(h) \, \exp\!\left(U_h / \tau_h\right)}
		{\sum_{h' \in \mathcal{H}} P_H(h') \, \exp\!\left(U_{h'} / \tau_h\right)}
	\end{equation}
\end{proof}

Proposition \ref{prop:pi_ref} says that the factored policy $\bar{\pi} = \bar{\pi}_R\bar{\pi}_H$ solves the regularized objective maximizing problem $G_s(q)$. The PUCT selection rule (\ref{eq:puct_selection}) was shown in \cite{grill2020monte} to be equal to:
\begin{equation}
	a^* = \arg \max_a \frac{\partial}{\partial n(a)} [q^\intercal y - \lambda_N \kl{\pi_\theta}{\hat{\pi}}]
\end{equation}
which can be viewed as a gradient descent step towards the optimizer of the regularized problem:
\begin{equation}
	\bar{\pi} = q^\intercal y - \lambda_N \kl{\pi_\theta}{\hat{\pi}}
\end{equation}
which in turn has the same solution form $y = \lambda_N \pi_\theta / (\alpha - q)$ as (\ref{eq:inner_loop_sol}). In other words, the inner loop's MCTS attempts to track the policy:
\begin{equation}
	\sup_{y} [q_h^\intercal y - \lambda_R (s,h) \kl{P_R(\cdot|h,s)}{y}]
\end{equation}

Combined with the outer loop's Boltzmann distribution's form, we can conclude that jointly, the Regulation Zero framework tracks the solution for the problem $G_s $(\ref{eq:gs_problem}).

\begin{equation*}
	\boxed{\hat{\pi} \approx \bar{\pi} = \arg\max_\pi [\expect{h,r \sim \pi}{q(s,h,r) - \Omega_s(\pi)}]},
\end{equation*}
if the outer loop's hotspot severity proxy function is assumed to track
\begin{equation}
	\phi(s) \approx U(s) = \sup_{y} \left[ q_h \cdot y - \lambda_h \, \kl{P_R}{y} \right].
\end{equation}

\subsection{Sample Complexity Estimation}
Let \(N\) be the total number of root simulations and let \(n_H(h)\) denote the number of times the hotspot
\(h\) is selected at the root, so that \(\sum_{h} n_H(h)=N\).  
We write \(\hat\pi_H,\hat\pi_R\) for Regulation Zero respectively.

For any pair \((h,r)\) we have
\begin{align*}
	\bigl|\hat\pi(h,r)-\bar\pi(h,r)\bigr|
	&=\bigl|\hat\pi_H(h)\,\hat\pi_R(r\mid h)-\bar\pi_H(h)\,\bar\pi_R(r\mid h)\bigr|\\
	&\le \bigl|\hat\pi_H(h)-\bar\pi_H(h)\bigr|\;\hat\pi_R(r\mid h)
	+\bar\pi_H(h)\,\bigl|\hat\pi_R(r\mid h)-\bar\pi_R(r\mid h)\bigr|\\
	&\le \bigl|\hat\pi_H(h)-\bar\pi_H(h)\bigr|
	+\bigl|\hat\pi_R(r\mid h)-\bar\pi_R(r\mid h)\bigr|.
\end{align*}
Taking the supremum over all \((h,r)\) yields the key reduction
\begin{equation}
	\|\hat\pi-\bar\pi\|_{\infty}
	\;\le\; \|\hat\pi_H-\bar\pi_H\|_{\infty}
	+ \max_{h}\,\|\hat\pi_R(\cdot\mid h)-\bar\pi_R(\cdot\mid h)\|_{\infty}.
	\label{eq:joint-reduction}
\end{equation}

For each hotspot \(h\), assuming that \(\bar\pi_R(\cdot\mid h)\) is locally constant as counts grow,
the reversed-KL tracking bound gives
\begin{equation}
	\|\hat\pi_R(\cdot\mid h)-\bar\pi_R(\cdot\mid h)\|_{\infty}
	\;\le\; \frac{|R|-1}{\,|R|+n_H(h)}.
	\label{eq:inner-bound}
\end{equation}

If the hotspot at each root selection is sampled from \(\bar\pi_H\),
Hoeffding’s inequality together with a union bound implies that, with probability at least
\(1-\delta\),
\begin{equation}
	\|\hat\pi_H-\bar\pi_H\|_{\infty}
	\;\le\; \sqrt{\frac{1}{2N}\,\log\!\frac{2|H|}{\delta}}.
	\label{eq:outer-bound}
\end{equation}

Hence, for any run with total simulations \(N\) and realised gate counts \(n_H(h)\), we obtain,
with probability at least \(1-\delta\),
\begin{equation}
	\boxed{\|\hat\pi-\bar\pi\|_{\infty}
		\;\le\;
		\sqrt{\frac{1}{2N}\,\log\!\frac{2|H|}{\delta}}
		\;+\;
		\max_{h}\frac{|R|-1}{\,|R|+n_H(h)}}
	\label{eq:anytime-bound}
\end{equation}

\section*{Acknowledgments}
This work received funding from the SESAR DeepFlow Project [Grant Agreement ...]. The contents reflect only the authors' views. We are grateful for the discussions with EUROCONTROL Network Manager controllers.

\bibliographystyle{ieeetr}
\bibliography{main}

\end{document}